\newcommand{\what}{\widehat}%
\newcommand{\wtilde}{\widetilde}%
\newcommand{\R}{\mathbb R}%
\newcommand{\C}{\mathbb C}%
\newcommand{\Z}{\mathbb Z}%
\newcommand{\hc}{\mathrm c}
\newtheorem{theorem}{Theorem}[section]
\newtheorem{thmspecial}{Theorem}
\newtheorem{lemma}[theorem]{Lemma}
\newtheorem{proposition}[theorem]{Proposition}
\newtheorem{corollary}[theorem]{Corollary}
\theoremstyle{definition}
\theoremstyle{definition}
\newtheorem{remark}[theorem]{Remark}
\numberwithin{equation}{subsection}
\numberwithin{theorem}{subsection}
\begin{document}
\baselineskip14pt

\author[P. Kumar]{Pratyoosh Kumar}
\address[P. Kumar]{Department of Mathematics and Statistics, Indian Institute of Technology Kanpur,
Kanpur 208016,  India, E-mail: prkumar@iitk.ac.in}

\author[S. K. Ray]{Swagato K. Ray }
\address[S. K. Ray]{Department of Mathematics and Statistics, Indian Institute of Technology Kanpur,
Kanpur 208016,  India, E-mail: skray@iitk.ac.in}

\author[R. P. Sarkar]{Rudra P. Sarkar}
\address[R. P. Sarkar]{Stat-Math Unit, Indian Statistical
Institute, 203 B. T. Rd., Calcutta 700108, India, email:
rudra@isical.ac.in}

\title[Eigenfunction of the Laplacian]{Characterization of almost  $L^p$-eigenfunctions of the
Laplace-Beltrami operator}
\subjclass[2010]{Primary 43A85; Secondary 22E30}
\keywords{Spectrum of Laplacian, eigenfunction of Laplacian,  symmetric
space, Damek-Ricci space}

\begin{abstract} In \cite{Roe} Roe proved that
if  a doubly-infinite sequence $\{f_k\}$ of functions on $\R$ satisfies $f_{k+1}=(df_{k}/dx)$ and  $|f_{k}(x)|\leq M$ for all $k=0,\pm 1,\pm 2,\cdots$ and $x\in \R$,
then $f_0(x)=a\sin(x+\varphi)$ where  $a$ and $\varphi$ are real constants.
This result was  extended to $\R^n$
by
Strichartz \cite{Str} where $d/dx$ is substituted by  the Laplacian on $\R^n$.
While it is
plausible to extend this theorem for other Riemannian manifolds or Lie groups,
Strichartz showed that the result holds true for Heisenberg groups,
but  fails for hyperbolic $3$-space. This negative result can be
indeed extended to any Riemannian symmetric space of noncompact type.
We observe that this failure is rooted in the
$p$-dependance of the $L^p$-spectrum of the Laplacian on the
hyperbolic spaces. Taking this into account we shall prove that for
all rank one Riemannian symmetric spaces of noncompact type, or more
generally for the harmonic $NA$ groups, the theorem actually holds
true when uniform boundedness is replaced by uniform ``almost $L^p$
boundedness''.
In addition we shall see that for the
symmetric spaces this theorem is capable of characterizing  the Poisson
transforms of  $L^p$ functions on the boundary, which some what
resembles the original theorem of Roe on $\R$.
\end{abstract}
\maketitle

\setcounter{tocdepth}{1}
\tableofcontents

\section{Introduction}
This paper  revolves mainly around results characterizing
eigenfunctions of the Laplace-Beltrami operator $\Delta$ on
Riemannian symmetric spaces  of noncompact type with real rank
one (which we shall  denote by $X$) and its nonsymmetric generalizations namely the Damek-Ricci (DR)
spaces (which will be denoted by $S$)  in which the
former spaces account for a very thin sub class (see \cite{ADY}). DR spaces are  also known as harmonic $NA$ groups. They are solvable Lie groups as well as harmonic manifolds and appear as counter examples  to the Lichnerowicz conjecture  in the noncompact case (see \cite{Damek-Ricci}).

We are concerned about the following generalization of
Roe's theorem  proved by  Strichartz  (\cite{Str}) which involves the Laplace
 operator $\Delta_{\R^n}$ of $\R^n$. (See also \cite{Howd-1, Howd-Reese, Kim-Chung} and the references therein.)
 \begin{theorem}[Strichartz]
Let $\{f_k\}_{k\in\Z}$ be a doubly infinite sequence of functions on
$\R^n$ with $\|f_k\|_{L^{\infty}(\R^n)}\leq M$ for all $k\in\Z.$ If
for some $\alpha>0$,  $\Delta_{\R^n} f_k=\alpha f_{k+1}$ for all
$k\in\Z$, then $\Delta_{\R^n} f_0=-\alpha f_0.$\label{bob}
\end{theorem}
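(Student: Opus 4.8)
The plan is to transfer everything to the Fourier transform side. Since each $f_k$ lies in $L^\infty(\R^n)\subset\mathcal S'(\R^n)$, the relations $\Delta_{\R^n}f_k=\alpha f_{k+1}$ are read in the tempered–distribution sense; iterating them upward and downward gives $\Delta_{\R^n}^m f_0=\alpha^m f_m$ and $\Delta_{\R^n}^m f_{-m}=\alpha^m f_0$ for every $m\ge 0$. Passing to Fourier transforms (normalized so that $\widehat{\Delta_{\R^n}u}=-|\xi|^2\widehat u$) yields, for $m\ge 0$,
\[
(-1)^m|\xi|^{2m}\,\widehat{f_0}=\alpha^m\,\widehat{f_m},\qquad
\widehat{f_0}=(-1)^m\Big(\frac{|\xi|^2}{\alpha}\Big)^{m}\widehat{f_{-m}} .
\]
The hypothesis $\|f_k\|_{L^\infty}\le M$ will enter only through the elementary uniform bound $|\langle\widehat{f_k},\psi\rangle|=|\langle f_k,\widehat\psi\,\rangle|\le M\|\widehat\psi\|_{L^1}$, valid for all $\psi\in\mathcal S(\R^n)$ and all $k\in\Z$.

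First I would show that $\operatorname{supp}\widehat{f_0}$ is contained in the sphere $S=\{\xi:|\xi|^2=\alpha\}$. Fix $\xi_0\notin S$ and a small ball $U\ni\xi_0$ with $\overline U\cap S=\emptyset$. If $|\xi_0|^2>\alpha$, shrink $U$ so that also $0\notin\overline U$; then for $\varphi\in C_c^\infty(U)$ the function $g=\alpha/|\xi|^2$ is smooth near $\operatorname{supp}\varphi$ and bounded there by some $\theta<1$, and from the first identity $\widehat{f_0}=(-1)^m g^m\widehat{f_m}$ on $U$, so $|\langle\widehat{f_0},\varphi\rangle|\le M\|\widehat{g^m\varphi}\|_{L^1}$. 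Since $g\le\theta<1$ on the fixed compact set $\operatorname{supp}\varphi$, one has $g^m\varphi\to 0$ in $\mathcal S(\R^n)$, hence $\|\widehat{g^m\varphi}\|_{L^1}\to 0$ and $\langle\widehat{f_0},\varphi\rangle=0$. If instead $|\xi_0|^2<\alpha$, the same reasoning applied to the second identity with $h=|\xi|^2/\alpha\le\theta<1$ on $\operatorname{supp}\varphi$ gives $\langle\widehat{f_0},\varphi\rangle=0$. Thus $\widehat{f_0}$ vanishes near every point off $S$.

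The remaining, and decisive, step is to improve ``$\widehat{f_0}$ supported on $S$'' to the eigenequation. As a compactly supported tempered distribution $\widehat{f_0}$ has finite order $\ell$, and by the structure theorem for distributions carried by a smooth hypersurface it can be written, in a tubular neighbourhood of $S$ with transversal coordinate $\sigma=|\xi|^2-\alpha$, as $\widehat{f_0}=\sum_{i=0}^{\ell}\nu_i\otimes\delta^{(i)}(\sigma)$ with $\nu_i\in\mathcal D'(S)$. Substituting this into $\widehat{f_m}=(-1)^m(1+\sigma/\alpha)^m\widehat{f_0}$ (which follows from the first identity) and expanding $(1+\sigma/\alpha)^m\delta^{(i)}(\sigma)$ by the Leibniz rule for distributions, one gets $f_m=(-1)^m\sum_{q=0}^{\ell}m^{q}\,C_q$ with $C_q\in\mathcal S'(\R^n)$ independent of $m$, the top coefficient $C_\ell$ being a nonzero constant times $\mathcal F^{-1}\big(\nu_\ell\otimes\delta(\sigma)\big)$. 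Since the matrix $(m^{q})_{0\le m,q\le\ell}$ is invertible, each $C_q$ is a finite linear combination of $f_0,\dots,f_\ell$, hence $C_q\in L^\infty$; consequently, for a.e. $x$ the sequence $m\mapsto(-1)^m f_m(x)=\sum_q m^q C_q(x)$ is a polynomial in $m\in\Z$ bounded by $M$, therefore constant, which forces $C_q=0$ for all $q\ge1$. In particular $C_\ell=0$, whence $\nu_\ell=0$; running this downward shows $\nu_i=0$ for every $i\ge1$, so $\widehat{f_0}=\nu_0\otimes\delta(\sigma)$ is carried by $S$ to order zero. Then $(|\xi|^2-\alpha)\widehat{f_0}=\sigma\cdot\big(\nu_0\otimes\delta(\sigma)\big)=0$, that is $\Delta_{\R^n}f_0=-\alpha f_0$.

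I expect this last step to be the main obstacle: boundedness of each individual $f_k$ alone does \emph{not} exclude ``transversal–derivative'' pieces of $\widehat{f_0}$ on $S$ (for instance $\widehat{f_0}=\delta'(\sigma)\otimes(\text{surface measure on }S)$ produces a bounded $f_0$), and it is precisely the uniformity in $k$ of the bound $\|f_k\|_{L^\infty}\le M$ — which propagates through $f_m=(-1)^m\sum_q m^q C_q$ — that rules them out. This is the same mechanism that, read in reverse, underlies the failure of the naive analogue on the hyperbolic spaces mentioned above, and it is what will have to be replaced by a genuinely $L^p$‑flavoured hypothesis in the rank‑one setting.
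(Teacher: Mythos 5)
Your proposal is correct, but the paper does not itself prove Theorem~\ref{bob}; it cites Strichartz~\cite{Str} and only proves the DR-space analogue, Theorem~\ref{thm-L^2-temp}, whose three-step proof is the right object of comparison. The skeleton is the same as yours: (i) pass to the Fourier side and localize $\operatorname{supp}\widehat{f_0}$ to the sphere $\{|\xi|^2=\alpha\}$ (in the paper's radial picture, to the two points $\{\pm\alpha\}$) by letting $m\to\pm\infty$; (ii) bound the transverse order; (iii) use the uniform bound to kill the derivative pieces. Where you differ is in how (ii) and (iii) are executed. You get finite order for free from the structure theorem for compactly supported distributions carried by a smooth hypersurface, write $\widehat{f_0}=\sum_{i\le\ell}\nu_i\otimes\delta^{(i)}(\sigma)$ with $\sigma=|\xi|^2-\alpha$, and then observe that $(-1)^m f_m=\sum_q m^q C_q$ is a polynomial in $m\in\Z$ whose values are bounded by $M$, hence constant, killing all $\nu_i$ with $i\ge1$ simultaneously. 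The paper instead proves the finite-order relation $(\alpha^2-\lambda^2)^{N+1}\widehat{T_0}=0$ by testing against the specially built functions $(\alpha^2-\lambda^2)^{N+1}g_r(\alpha^2-\lambda^2)\phi$ and sending $r\to 0$, and then descends from order $N+1$ to order $1$ by a maximal-$k_0$ extremal argument and the binomial identity $\Delta^k g=k(-c)^{k-1}(\Delta+c)g+(-c)^k g$ valid when $(\Delta+c)^2g=0$, dividing by $k$ and letting $k\to\infty$. Both final steps rest on the same mechanism — a polynomial in $k$ that is uniformly bounded on $\Z$ must be constant — your version treats all degrees at once, the paper's treats one degree at a time. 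Your route buys transparency and a conceptual explanation (boundedness really controls a polynomial on $\Z$), but leans on the global structure theorem and an explicit Fourier coordinate, which is less convenient in the symmetric or DR setting where the paper must work through the Schwartz seminorm hypothesis and the radial spherical Fourier transform; the paper's test-function route transports there more easily. One small streamlining of your write-up: the sentence "running this downward" is unnecessary — once the bounded-polynomial lemma gives $C_q=0$ for all $q\ge 1$ at once, you already have $\widehat{f_1}=-\widehat{f_0}$, and substituting into $\alpha\widehat{f_1}=-|\xi|^2\widehat{f_0}$ yields $(|\xi|^2-\alpha)\widehat{f_0}=0$ directly, without even returning to the $\nu_i$.
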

 The case $\alpha=1$ was proved in \cite{Str}, but the same proof
works for other values of $\alpha$ as well. It is not difficult to observe that
the  theorem above holds true if one replaces
$L^{\infty}(\R^n)$ by $L^p(\R^n)$-norm with the restriction $p>2n/(n-1)$ for the result to be non-vacuous (see
\cite{AN}) or by weak $L^p$-norm for $p\ge 2n/(n-1)$ (which can be substantiated by
standard estimate of Bessel functions (see \cite{S-W})). Our starting point  however is  a striking counter example in \cite{Str}
 which shows that the  result above is no
longer true if $\R^n$ is replaced by the  symmetric space
$SL(2,\C)/SU(2)$. Precisely, there exists a uniformly bounded doubly
infinite sequence $\{f_k\}_{k\in\Z}$ of radial eigenfunctions of
$\Delta$ on $SL(2,\C)/SU(2)$ satisfying $\Delta f_k=f_{k+1}$ but
$\Delta f_0\neq -f_0.$ This counter example can be strengthened. Precisely, in any $X$ or $S$ a sequence  $\{f_k\}$ can be constructed which satisfies the hypothesis of the theorem above with uniformly bounded (or uniformly bounded with respect to  $L^p$-norm, $2<p<\infty$), but   $f_0$
is not even an eigenfunction of $\Delta$ (see section 3).  This motivated us to have
a detailed investigation of the phenomenon in the context of symmetric or DR spaces.
 A somewhat deeper understanding of the
counter example mentioned above tells us that the failure of the
result for hyperbolic spaces can be ascribed to the, by now well
known, fact that the $L^p$-spectrum of the Laplacian on $X$ or on $S$ depends
on $p$ (see \cite{Lo-Ry-2, Tay, ADY}). This is one of the
most intriguing difference between analysis on Lie groups with
polynomial growth and that of with exponential growth (see, for
instance \cite{Hu2, Hu3}).

Among other things in the present paper we shall obtain an
essentially sharp version of Theorem \ref{bob} on symmetric and on  DR spaces
which will involve various uniform size-estimates close to $L^p$,
instead of uniform boundedness.  These
size-estimates arise naturally due to the behavior of Poisson
transforms (of $L^p$-functions on the boundary) whose eigenvalues lie on the
boundary of the $L^p$-spectrum of $\Delta$ (see \cite{Lo-Ry, KRS,
Ray-Sarkar} and section 3.1) and among them weak
$L^p$-norm can be singled out by its translation invariance.  We mention  here two
representative theorems (for notation see section 2). We shall prove these results as a
consequence of a general version of Theorem \ref{bob} on DR spaces
$S$ in section 5.

\begin{thmspecial}\label{thm-X-2}
Let $f$ be a   measurable
functions on $S$  and $\alpha$ a nonzero real number. If   $\|\Delta^k f\|_{2, \infty}\le M(\alpha^2+\rho^2)^k$ for all $k\in \Z$, for some $M>0$,
 then $\Delta f=-(\alpha^2+\rho^2)f$. In particular when $S=X$ is an Iwasawa $NA$ group, then
  $f=\mathcal P_\alpha F$ for some $F\in L^2(K/M)$.

 \end{thmspecial}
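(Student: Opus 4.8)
The plan is to deduce everything from a spectral-localisation statement: the two tails $k\to+\infty$ and $k\to-\infty$ of the hypothesis pin the spherical (Helgason--Fourier) spectrum of $f$ to the single set $\{\lambda^{2}=\alpha^{2}\}$, after which the uniform control over all of $\Z$ rules out any Jordan-type degeneracy and yields the exact eigenvalue equation. Write $\mu=\alpha^{2}+\rho^{2}>0$ and $g_{k}=\mu^{-k}\Delta^{k}f$, so $\{g_{k}\}_{k\in\Z}$ is a doubly infinite sequence on $S$ with $\Delta g_{k}=\mu g_{k+1}$ and $\sup_{k\in\Z}\|g_{k}\|_{2,\infty}\le M$. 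It suffices to prove $\Delta g_{0}=-\mu g_{0}$: the $k=0$ hypothesis already gives $f=g_{0}\in L^{2,\infty}(X)$, so that $f$ is a weak-$L^{2}$ eigenfunction of $\Delta$ with eigenvalue $-(\alpha^{2}+\rho^{2})$, and the description of such eigenfunctions recalled in Section~3.1 (for an Iwasawa $NA$ group $X$) then identifies $f=\mathcal P_{\alpha}F$ with $F\in L^{2}(K/M)$.

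For the localisation I would convolve with radial ``Gaussian bumps''. Fix $\lambda_{0}\in\R$ and let $\varphi_{\lambda_{0},\epsilon}$ be the radial function on $S$ whose spherical transform is $e^{-(\lambda^{2}-\lambda_{0}^{2})^{2}/\epsilon}$; convolution by it commutes with $\Delta$ (on the transform side multiplication by that bump, with $\Delta\leftrightarrow-(\lambda^{2}+\rho^{2})$), one has $\sup_{\epsilon>0}\|\varphi_{\lambda_{0},\epsilon}\|_{2,1}<\infty$, and a standard Lorentz-space convolution estimate gives $\|h*\varphi_{\lambda_{0},\epsilon}\|_{\infty}\le C\,\|h\|_{2,\infty}\,\|\varphi_{\lambda_{0},\epsilon}\|_{2,1}$. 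If $\lambda_{0}^{2}>\alpha^{2}$, apply this to $h=\Delta^{k}f=\mu^{k}g_{k}$, freeze the slowly varying factor $(-(\lambda^{2}+\rho^{2}))^{k}$ across the bump (legitimate once $\epsilon=\epsilon_{k}\lesssim k^{-2}$), and use $\sup_{k}\|g_{k}\|_{2,\infty}\le M$ to obtain
\[
\|f*\varphi_{\lambda_{0},\epsilon_{k}}\|_{\infty}\ \lesssim\ M\Big(\tfrac{\mu}{\lambda_{0}^{2}+\rho^{2}}\Big)^{k}\ \xrightarrow[k\to\infty]{}\ 0 .
\]
If $\lambda_{0}^{2}<\alpha^{2}$, the same device applied to $h=\mu^{-j}\Delta^{-j}f=\mu^{-j}g_{-j}$ gives $\|f*\varphi_{\lambda_{0},\epsilon_{j}}\|_{\infty}\lesssim M((\lambda_{0}^{2}+\rho^{2})/\mu)^{j}\to0$. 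Since $\varphi_{\lambda_{0},\epsilon}$ detects the spherical spectrum of $f$ near $\pm\lambda_{0}$, this forces that spectrum to be carried by $\{\lambda^{2}=\alpha^{2}\}$ (the same being true of each $g_{m}$).

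Finally I would pass from the localisation to $\Delta g_{0}=-\mu g_{0}$. By elliptic regularity $g_{0}$ is smooth; its spherical spectrum being the two-point set $\{\pm\alpha\}$, $g_{0}$ is a (finite-order) generalized eigenfunction at the eigenvalue $-\mu$ --- which is the only possibility, because a weak-$L^{2}$ generalized eigenfunction on $S$ can occur only for real spectral parameter (for non-real $\lambda$ the Poisson-type integrals grow too fast for $L^{2,\infty}$), and $-\mu=-(\alpha^{2}+\rho^{2})$ is the unique real point of modulus $\mu$ in the ray $(-\infty,-\rho^{2}]$. If the order exceeded one, the relation $\Delta g_{k}=\mu g_{k+1}$ would make $g_{k}$ equal to $(-1)^{k}$ times a non-constant polynomial in $k$ with coefficients among the functions $(\Delta+\mu)^{j}g_{0}$, so that $\|g_{k}\|_{2,\infty}$ would grow with $k$ --- against the hypothesis. (Equivalently: an order-$2$ generalized eigenfunction has radial part $\asymp d(e,\cdot)\,e^{-\rho\,d(e,\cdot)}$, which just fails to be weak-$L^{2}$.) Hence $\Delta f=-\mu f$.

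The hard part is the localisation step. Spherical transforms of radial $L^{1}$-Schwartz functions on $S$ are holomorphic in a strip, so no such kernel can have transform vanishing near $\pm\alpha$; one is forced onto the approximate bumps $\varphi_{\lambda_{0},\epsilon}$, and must then control with care (i) the error in freezing $(\lambda^{2}+\rho^{2})^{\pm k}$ over the bump, which is what couples $\epsilon$ to $k$; (ii) the stability of $\|\varphi_{\lambda_{0},\epsilon}\|_{2,1}$ as $\epsilon\to0$; and (iii) the step from ``$f*\varphi_{\lambda_{0},\epsilon}$ is exponentially small in $\epsilon^{-1/2}$'' to ``the spherical spectrum of $f$ vanishes near $\pm\lambda_{0}$''. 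It is exactly here, together with the fact used above that weak-$L^{2}$ eigenfunctions exist only for real spectral parameter (the weak-$L^{2}$ spectrum of $\Delta$ being the real ray, not the two-dimensional $L^{p}$-spectrum for $p>2$), that the scale $L^{2,\infty}$ is essential --- which is why the theorem fails with $L^{\infty}$, or $L^{p}$ for $p>2$, in its place.
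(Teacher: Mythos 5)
Your overall strategy---rescale to $g_k$, localise the spherical spectrum at $\{\pm\alpha\}$ using both tails $k\to\pm\infty$, rule out Jordan-block degeneracy with the full two-sided uniform bound, and then identify $f$ as a Poisson transform of an $L^2(K/M)$ function via the weak-$L^2$ characterization---is exactly the paper's route (Lemma~\ref{Mp-Apq-dist}, Theorem~\ref{thm-L^2-temp}, Theorem~\ref{weakl2case}). But the execution of the localisation step is built on a misconception and leaves genuine gaps. You are ``forced onto the approximate bumps'' $\varphi_{\lambda_0,\epsilon}$ because you reason that spherical transforms of $L^1$-Schwartz kernels are holomorphic in a strip and hence cannot vanish near $\pm\alpha$. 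But $L^1$-Schwartz is the wrong test space here: since $f\in L^{2,\infty}(S)$, the natural duality (Lemma~\ref{Mp-Apq-dist}(a)(i)) is with the \emph{$L^2$}-Schwartz space $C^2(S)$, and the spherical transform of $C^2(S)^\#$ is the space of \emph{even Schwartz functions on $\R$}, not holomorphic functions on a strip; the strip degenerates to $\R$ when $p=2$. So one can take test functions whose transforms vanish identically on a neighbourhood of $\pm\alpha$, and the localisation becomes exact, with no $\epsilon$-vs-$k$ bookkeeping at all. Once you do use the Gaussian bumps, two delicate claims are asserted without justification. First, $\sup_{\epsilon>0}\|\varphi_{\lambda_0,\epsilon}\|_{2,1}<\infty$ is not obvious: the unnormalised kernels concentrate onto $\phi_{\lambda_0}$, which is in $L^{2,\infty}(S)$ but, by Proposition~\ref{negative-results}(ii), \emph{not} in $L^{2,q}(S)$ for any $q<\infty$, in particular not in $L^{2,1}(S)$. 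Second, the convolution bound $\|h*\varphi\|_\infty\le C\|h\|_{2,\infty}\|\varphi\|_{2,1}$ on the nonunimodular group $S$ requires an argument (the right translations occurring in the convolution do not preserve left Haar measure). Also, your calibration $\epsilon_k\lesssim k^{-2}$ for freezing $(\lambda^2+\rho^2)^k$ across a bump of width $\sim\epsilon^{1/4}$ does not work: the relative variation is of order $k\epsilon^{1/4}$, so one needs $\epsilon_k\ll k^{-4}$, and each such re-calibration feeds back into the $\|\varphi_{\lambda_0,\epsilon_k}\|_{2,1}$ bound. All of these difficulties disappear with the exact localisation available for $p=2$.

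A second real gap concerns radiality. You speak of the ``spherical spectrum of $f$'', but $f$ is an arbitrary measurable function on $S$, and on a DR space there is no compact rotation group producing a Helgason--Fourier transform. Convolving with radial kernels does not by itself let you apply the spherical inversion to a non-radial $f$. The paper handles this by proving the result first for radial $L^2$-tempered distributions and then reducing the general case to the radial one via the family $R(\ell_y T_k)$, $y\in S$, using that $\Delta$ commutes with translations and radialization, that the heat semigroup separates $R(\ell_y T)=0$ for all $y$ from $T=0$, and that the seminorm bound is preserved under $\ell_y$ up to a constant $C_y$ (Steps $1'$--$4'$ of Theorem~\ref{thm-L^2-temp}). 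This reduction is not automatic, and your write-up does not supply it. (On the symmetric space $X$ one may instead decompose in $K$-types, as in Remark~\ref{alternative-for-X}, but that route is unavailable for general $S$.) Finally, the Jordan-degeneracy step is morally right, but ``$\|g_k\|_{2,\infty}$ would grow with $k$'' needs the binomial-expansion argument (Step~3 of the paper's proof) to show that an order $\ge 2$ block forces $\|T_k\|$ to grow at least linearly in $k$, contradicting the uniform bound.
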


\begin{thmspecial}\label{thm-X-p}
Let $f$ be a   measurable
functions on $S$ and $p\in (1,2)$.
If   $\|\Delta^k f\|_{p', \infty}\le M (4\rho^2/pp')^k=M ((i\gamma_{p'}\rho)^2+\rho^2)^k$ for $k=0,1,2,\dots$, for some $M>0$,
then $\Delta f=-(4\rho^2/pp')f$. In particular when $S=X$ is an Iwasawa $NA$ group, then
 $f=\mathcal P_{-i\gamma_p\rho} F$ for some $F\in L^{p'}(K/M)$.
 \end{thmspecial}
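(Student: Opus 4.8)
\emph{The plan.} Write $b=4\rho^2/(pp')$ and $\lambda_0=-i\gamma_p\rho$, so that, as noted in the statement, $-b=-(\lambda_0^2+\rho^2)=-((i\gamma_{p'}\rho)^2+\rho^2)$. The number $-b$ is the point of the $L^{p'}(S)$-spectrum $\Sigma_{p'}$ of $\Delta$ with largest real part — the ``tip'' of the parabolic region — and $\pm\lambda_0$ are the only values of the spectral parameter in the closed tube $\{\lambda:|\operatorname{Im}\lambda|\le\gamma_p\rho\}$ for which $|\lambda^2+\rho^2|\le b$: writing $\lambda=\xi+i\eta$ with $|\eta|\le\gamma_p\rho$ one has $\xi^2-\eta^2+\rho^2\ge(1-\gamma_p^2)\rho^2=b$, hence $|\lambda^2+\rho^2|^2=(\xi^2-\eta^2+\rho^2)^2+4\xi^2\eta^2\ge b^2$, with equality exactly when $\xi=0$ and $|\eta|=\gamma_p\rho$. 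I would first prove the eigenfunction equation $\Delta f=-bf$. Granting that, the $k=0$ hypothesis gives $\|f\|_{p',\infty}\le M$, so $f$ is a weak-$L^{p'}$ eigenfunction with eigenvalue at the bottom of $\Sigma_{p'}$; for $S=X$ the identification $f=\mathcal P_{-i\gamma_p\rho}F$ with $F\in L^{p'}(K/M)$ is then precisely the known correspondence between such eigenfunctions and Poisson transforms of $L^{p'}$ boundary data (\cite{Lo-Ry,KRS,Ray-Sarkar}).

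\emph{Reducing to a localization of the Helgason transform.} Since $f_k:=b^{-k}\Delta^kf$ ($k\ge0$) satisfies $\Delta f_k=bf_{k+1}$ and $\sup_k\|f_k\|_{p',\infty}\le M$, it suffices to prove the sequence form of Theorem~\ref{bob} on $S$ (the general result of Section~5), and I would do so by a Fourier-analytic localization. A function of weak type $p'$, $p'<\infty$, on $S$ is a tempered object carrying a Helgason--Fourier transform $\widetilde f(\lambda,\cdot)$ on the closed tube $\{|\operatorname{Im}\lambda|\le\gamma_p\rho\}$ times the boundary, with $\widetilde{\Delta^kf}=(-(\lambda^2+\rho^2))^k\,\widetilde f$. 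For $|z|>b$ the $L^{p',\infty}(S)$-valued series $u_z:=-\sum_{k\ge0}z^{-k-1}\Delta^kf$ converges by the hypothesis and solves $(\Delta-z)u_z=f$; equivalently it continues $z\mapsto(\Delta-z)^{-1}f$ holomorphically across $\Sigma_{p'}\cap\{|z|>b\}$, so the $\Delta$-local spectrum of $f$ lies in $\Sigma_{p'}\cap\{|z|\le b\}=\{-b\}$, i.e. $\widetilde f$ is supported at $\lambda=\pm\lambda_0$. The way I would make this precise is contrapositive: if $\widetilde f$ had a nonzero piece supported where $|\lambda^2+\rho^2|>b$, I would excise it with a Fourier multiplier and invert, obtaining a nonzero summand of $f$ whose $k$-th Laplacian has weak-$L^{p'}$ norm growing at least like $(\inf|\lambda^2+\rho^2|)^k\gg b^k$, contradicting the hypothesis.

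\emph{From single-point support to the eigenvalue equation — the hard part.} Knowing $\widetilde f$ is supported at $\pm\lambda_0$ does not yet give $(\Delta+b)f=0$; one must exclude generalized-eigenfunction terms. By the structure of distributions supported on $\{\pm\lambda_0\}$ times the boundary, $f$ should be a finite sum of $\lambda$-derivatives $\partial_\lambda^j[\mathcal P_\lambda(\cdot)]$ at $\lambda=\pm\lambda_0$, the $j=0$ term being a genuine eigenfunction; I would then argue that every $j\ge1$ term violates $f\in L^{p',\infty}(S)$. The point is that $\pm\lambda_0$ sit on the \emph{boundary} of the tube and only the weak-$L^{p'}$ (not the $L^{p'}$) norm is controlled: already $\mathcal P_{\lambda_0}F$ for $F\in L^{p'}(K/M)$ is borderline, lying in $L^{p',\infty}(S)\setminus L^{p'}(S)$ — the relevant spherical function decays like $e^{-(1-\gamma_p)\rho r}$ at distance $r$, which against the volume growth $e^{2\rho r}$ gives distribution function $\asymp s^{-p'}$ — whereas differentiating in $\lambda$ inserts into the integrand a factor comparable to the unbounded Busemann function, replacing the decay by $r\,e^{-(1-\gamma_p)\rho r}$ and the distribution function by $\asymp s^{-p'}(\log 1/s)^{p'}$, which is not of weak type $p'$. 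Hence only the $j=0$ term survives and $\Delta f=-bf$.

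\emph{Main obstacle and the Damek--Ricci case.} The genuinely delicate step is the third one executed rigorously: giving a bona fide meaning to $\widetilde f$ near the critical parameters $\pm\lambda_0$, which lie on the \emph{boundary} of the $L^{p'}$-tube where the Helgason transform of a weak-$L^{p'}$ function is exactly the borderline object of Poisson-transform theory; invoking the correct structure theorem for distributions supported there; and estimating the $\lambda$-derivative terms sharply enough that the single bound $k=0$ already excludes them. For general Damek--Ricci $S$ the scheme is unchanged once the Helgason--Fourier transform on $S$ and the $p'$-dependence of $\Sigma_{p'}$ are in hand; the geometric fact that $\{|z|\le b\}\cap\Sigma_{p'}=\{-b\}$ is a single point is the same, and the Poisson-transform identification is simply omitted, having no analogue beyond the symmetric case.
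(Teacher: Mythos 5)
Your high-level picture---localize the spectrum of $f$ at the tip $-b=-4\rho^2/pp'$ of the $L^{p'}$-spectral parabola, note the only spectral parameters with $|\lambda^2+\rho^2|\le b$ in the tube are $\pm\lambda_0$, then rule out ``generalized eigenfunction'' terms---is the right picture and matches the paper's in spirit. But there are two concrete gaps, and a third omission, that keep this from being a proof.

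\emph{First, the localization step is stated in a form that is hard to make rigorous.} You propose to excise the piece of $\widetilde f$ supported where $|\lambda^2+\rho^2|\ge c>b$ and deduce that its $k$-th Laplacian has weak-$L^{p'}$ norm $\gtrsim c^k$. That is a \emph{lower} bound on a multiplier operator acting on a weak-$L^{p'}$ function, which does not follow from knowing where the Fourier transform lives: $\Delta$-multipliers on $L^{p'}$ of a noncompact symmetric or DR space are controlled by the behavior of the symbol on the whole strip $S_p$, not just on the support of $\widetilde f$, and there is no Plancherel isometry in weak $L^{p'}$ to appeal to. The paper runs the argument in the dual direction: pair $\widehat{T_0}$ against a test function $\phi$ vanishing near $\pm\lambda_0$, rewrite as $\langle T_k,\psi_k\rangle$ with $\psi_k$ the preimage of $\big(\tfrac{\alpha^2+\rho^2}{\lambda^2+\rho^2}\big)^k\phi$, and show $\psi_k\to0$ in $C^p(S)^\#$. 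That uses only the \emph{contraction} $\big|\tfrac{\alpha^2+\rho^2}{\lambda^2+\rho^2}\big|\le1$ on $S_p$ (Step~1 of Theorem~\ref{thm-L^2-temp} and the estimates around~(\ref{lp-dist-1})--(\ref{basic-step-p})), and requires no lower bound whatsoever. If you reformulate your excision argument in this dual/test-function form it becomes correct and is essentially what the paper does.

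\emph{Second, and this is the serious gap, your treatment of the generalized-eigenfunction terms is not a proof.} You propose a structure theorem for distributions whose transform is supported on $\{\pm\lambda_0\}\times(\text{boundary})$, giving a finite sum of $\lambda$-derivatives of Poisson transforms, and then sharp weak-$L^{p'}$ estimates showing every $j\ge1$ term fails. Both ingredients are nontrivial and not supplied: the points $\pm\lambda_0$ sit on the \emph{boundary} of the strip $S_p$, where test functions are only required continuous (not smooth in $\lambda$), so the notion of a distribution of finite order at such a point is already delicate; and making your heuristic ``the derivative inserts a Busemann factor and destroys weak $L^{p'}$'' into a rigorous lower bound for a Poisson transform of a derivative-distribution requires real work. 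The paper avoids this entirely: once it has $(\alpha^2-\lambda^2)^{N+1}\widehat{T_0}=0$ for some finite $N$ (Step~2 / the $V_k$-decomposition in Theorem~\ref{thm-Lp-temp}), it runs the purely algebraic binomial-expansion argument of Step~3 of Theorem~\ref{thm-L^2-temp}, which uses \emph{only} the uniform bound $|\langle T_k,\psi\rangle|\le M\gamma(\psi)$ for all $k$ and needs neither a structure theorem nor any estimate on $\partial_\lambda^j\mathcal P_\lambda$. This algebraic step is the real novelty you are missing; without it, the gap between ``support at a point'' and ``genuine eigenfunction'' stays open.

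\emph{Third}, on DR spaces you wave the scheme through, but the Helgason--Fourier transform over $N$ and a structure theory of $N$-valued boundary distributions supported at $\pm\lambda_0$ are not in the toolkit. The paper reduces to radial distributions by translating and radializing (Steps~$1'$--$4'$ of Theorem~\ref{thm-L^2-temp}, reused in Theorem~\ref{thm-Lp-temp}), and then only the \emph{spherical} Fourier transform is needed. This reduction is precisely what replaces the $K$-type decomposition that is unavailable on a general $S$; omitting it leaves the DR case unaddressed.

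The very last step---from $\Delta f=-bf$ with $\|f\|_{p',\infty}<\infty$ to $f=\mathcal P_{-i\gamma_p\rho}F$ with $F\in L^{p'}(K/M)$ on symmetric $X$---is correct as you state it; this is Corollary~\ref{lr}.
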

If we define $f_k=(4\rho/pp')^{-k}\Delta^kf$ then the  statements of these theorems  resemble Theorem \ref{bob}.
  We observe en passant that Theorem A and B have structural resemblance with the following celebrated  result of  Kotake and Narasimhan \cite{KoNar}:
\begin{theorem} Let $\Omega$ be an open set of $\R^n$. Let  $A$ be  a
 linear elliptic operator of order $m$ with analytic coefficients in $\Omega$. If a  $C^\infty$ function $f$ satisfies  for some $c>0$,
$\|A^k f\|_{L^2(\Omega)}\le (mk)! c^{k+1}$  for all nonnegative integers $k$, then $f$ is real analytic on $\Omega$.
\end{theorem}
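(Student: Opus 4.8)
The plan is to prove this by the classical elliptic‑regularity bootstrap, tracking the constants with care. Since real analyticity is a local property, I would fix a point $x_0\in\Omega$ and concentric balls $B_R(x_0)\subset B_{2R}(x_0)\subset B_{4R}(x_0)\subset\subset\Omega$, and reduce to showing that $f$ is real analytic near $x_0$. For this I would use the standard $L^2$ criterion: $f$ is real analytic near $x_0$ as soon as there exist $C,L>0$ with $\|D^\alpha f\|_{L^2(B_R)}\le C\,L^{|\alpha|}\,|\alpha|!$ for every multi‑index $\alpha$ — the classical pointwise Cauchy estimates following on a slightly smaller ball by Sobolev embedding, at the cost of enlarging $L$, and the difference between $\alpha!$ and $|\alpha|!$ being harmless. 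So everything comes down to producing such bounds, and the only resources available are: (i) the hypothesis $\|A^k f\|_{L^2(\Omega)}\le(mk)!\,c^{k+1}$, which in particular controls $\|A^kf\|_{L^2(B_{2R})}$; (ii) the ellipticity of $A$, so that $A^k$ is elliptic of order $mk$; and (iii) the real‑analyticity of the coefficients of $A=\sum_{|\beta|\le m}a_\beta(x)D^\beta$, recorded as Cauchy estimates $\|D^\gamma a_\beta\|_{L^\infty(B_{4R})}\le A_0\,A_1^{|\gamma|}\,|\gamma|!$.

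The engine of the argument is the interior elliptic estimate in a form that displays the dependence on the radii: for concentric balls $B_{r'}\subset B_{r}\subset B_{4R}$ and $u\in C^\infty$,
\[
\sum_{|\alpha|=m}\|D^\alpha u\|_{L^2(B_{r'})}\le C_0\Big(\|Au\|_{L^2(B_r)}+\sum_{j=0}^{m-1}(r-r')^{\,j-m}\sum_{|\alpha|=j}\|D^\alpha u\|_{L^2(B_r)}\Big).
\]
I would apply this with $u=D^\beta f$; commuting $D^\beta$ past $A$ produces $A(D^\beta f)=D^\beta(Af)+[A,D^\beta]f$, where the commutator expands by the Leibniz rule into a sum of terms of the form $(D^\gamma a_\delta)\,D^{\beta-\gamma+\delta}f$ with $|\gamma|\ge1$, and here the coefficient Cauchy estimates (iii) are used to bound the $D^\gamma a_\delta$. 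Iterating this $k$ times trades $mk$ derivatives on $f$ for one factor of $A^k f$ — controlled by (i) — together with a controlled collection of lower‑order error terms. To prevent the geometric loss from shrinking the ball $k$ times from overwhelming the bound, I would carry out the iteration on a scale of nested balls $B_{\rho_j}(x_0)$ with $\rho_j=R(1+j/N)$, $j=0,1,\dots,N$ and $N$ comparable to $mk$, working inward from $B_{2R}$ toward $B_R$, so that each shrinkage costs a factor $\lesssim N/R$ and the whole iteration costs $\lesssim(N/R)^{mk}$, which is absorbed into $L^{mk}$. The factorial $(mk)!$ that appears at the terminal step is exactly the one in hypothesis (i): since $A^k$ has order $mk$, the bound $\|A^k f\|\lesssim(mk)!\,c^{k+1}$ is precisely the analytic growth rate of the $mk$‑th order derivatives of $f$, which is why the exponent of the factorial in the statement has to be $mk$.

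Concretely I would prove, by induction on $k$, a weighted estimate of the shape
\[
\sum_{|\alpha|=mk}\|D^\alpha f\|_{L^2(B_R)}\le C_1\,C_2^{\,mk}\,(mk)! ,
\]
the inductive step combining the displayed elliptic estimate (to reduce the order of differentiation by $m$), the Leibniz bound on $[A,D^\beta]f$ (which brings in the analyticity of the coefficients and the inductive hypothesis at all lower orders), the hypothesis $\|A^kf\|_{L^2(B_{2R})}\le(mk)!\,c^{k+1}$, and the ball‑shrinking scheme just described; the base cases (bounded $k$) follow from ordinary interior elliptic regularity, since $f$ is already $C^\infty$. Writing an arbitrary $\alpha$ as $|\alpha|=mk+s$ with $0\le s<m$ and applying the estimate once more to the remaining $s$ derivatives then yields $\|D^\alpha f\|_{L^2(B_R)}\le C\,L^{|\alpha|}\,|\alpha|!$ for all $\alpha$, hence real analyticity near $x_0$; since $x_0\in\Omega$ was arbitrary, $f\in C^\omega(\Omega)$.

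The step I expect to be the main obstacle is precisely the bookkeeping inside the induction: the commutator $[A,D^\beta]f$ expands into a great many Leibniz terms, each carrying a derivative $D^\gamma a_\delta$ of size $\sim A_0A_1^{|\gamma|}|\gamma|!$, and these must be summed — against appropriately chosen weights (powers of $R$ divided by factorials), and over the $O(mk)$ intermediate balls — without destroying the clean $C_2^{\,mk}(mk)!$ growth. Designing the weighted inductive hypothesis and the radii $\rho_j=R(1+j/N)$ so that the combinatorial and geometric losses telescope is the delicate heart of the argument; once the right formulation is fixed, the individual estimates are routine. (An alternative route is via the FBI/Bros--Iagolnitzer transform and analytic wave front sets, showing $WF_a(f)=\emptyset$ using that $A$ is elliptic with analytic coefficients; but quantifying how the bound on $\|A^kf\|$ forces the analytic wave front to be empty is essentially the same analytic estimate in disguise, so it offers no real shortcut.)
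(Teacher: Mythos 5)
The paper does not prove this theorem: it is quoted verbatim, with attribution, from Kotake and Narasimhan \cite{KoNar} and appears only as an aside to point out a structural analogy with Theorems A and B; there is therefore no in-paper argument to compare against. Your sketch follows the classical route---localize, use the scaled interior elliptic estimate with explicit dependence on $r-r'$, commute $D^\beta$ past $A$ and bound the commutator by Leibniz together with Cauchy estimates on the coefficients, and dissipate the shrinkage cost over a Morrey--Nirenberg chain of nested balls. This is essentially the argument of the cited reference, the ingredients you list are the right ones, and you correctly flag the combinatorial bookkeeping as the technical heart.

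There is, however, a structural gap in the induction as formulated. You propose to prove, by induction on $k$, the bound $\sum_{|\alpha|=mk}\|D^\alpha f\|_{L^2(B_R)}\le C_1C_2^{mk}(mk)!$, i.e.\ an estimate only at orders divisible by $m$, and to obtain the intermediate orders $mk+s$, $0<s<m$, only \emph{after} the induction by ``applying the estimate once more.'' But the commutator $[A,D^\beta]f=-\sum_{|\delta|\le m}\sum_{0<\gamma\le\beta}\binom{\beta}{\gamma}(D^\gamma a_\delta)\,D^{\beta-\gamma+\delta}f$ produces terms of every order from $0$ to $|\beta|+m-1$, not only multiples of $m$, and the interior estimate itself brings in $\|D^jD^\beta f\|$ for all $j<m$. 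So the inductive step for $|\alpha|=mk$ already requires bounds at orders $mk-1,mk-2,\dots$---precisely the intermediate orders you were deferring---and the stated hypothesis does not furnish them. You need to carry a bound at every order $j$, say $\sum_{|\alpha|=j}\|D^\alpha f\|_{L^2(B_{\rho_j})}\le C_1C_2^{\,j}\,j!$ on a decreasing chain of balls, and induct on $j$ one derivative at a time. Relatedly, stating the bound on the fixed innermost ball $B_R$ is inconsistent with the nested-ball device you invoke: the order-$j$ estimate should live on $B_{\rho_j}$, with the telescoping of the $(\rho_{j-1}-\rho_j)^{-1}$ factors absorbed into the weight. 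Both issues are repairable and do not invalidate the overall strategy, but as literally written the induction would not close.
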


It is not difficult to see that using the $G$-invariance of $\Delta$
on $X=G/K$, it is enough to restrict to the $K$-isotypic components of
$f$. However the lack of {\em rotation} on the DR spaces makes it more interesting and forces
us to adopt a different approach.

 We may stress that in the theorems above, for the symmetric
spaces, a concrete description of the eigenfunction $f$ as Poisson
transforms (of $L^p$-functions on the boundary) is achieved. A
crucial ingredient for this, in the case of Theorem \ref{thm-X-p} is
a characterization of eigenfunctions due to   Lohou\'{e} and Rychener
(\cite{Lo-Ry}, see also \cite{Sj}). The corresponding result
required for Theorem \ref{thm-X-2} seems to be new and will be
proved in section 4 using a result of Ionescu (\cite{Ion-Pois-1}).
A rich body of literature concerning representation
theorems of eigenfunctions of $\Delta$ on $X$ (see e.g. \cite{F, KW,
Stoll, Sj, Bou-Sami, BOS}, see also \cite{Dam1}) is already available.  In section 4 we shall briefly survey the existing results in this direction and  further generalize them, keeping our need in view. In particular we shall settle a question posed in \cite{Bou-Sami}. This section  may be of independent interest and is independent of the rest of the paper.   Some of these results  will be used to obtain analogues of Theorems \ref{thm-X-2} and Theorem \ref{thm-X-p}.

The phrase {\em almost $L^p$} is used in this paper to mean
the size estimates which are close to $L^p$-norm, weak $L^p$ being one of the  examples. In section 4 we shall obtain others. We shall use these estimates to formulate various analogues of Theorem \ref{thm-X-2} and Theorem \ref{thm-X-p}. For motivation we  cite a version of Theorem \ref{bob} on $\R^n$ which involves the following {\em almost $L^2$}-size estimate: \[M_2(f)=\left(\limsup_{R\to\infty}\frac{1}{R}\int_{B(0,R)}|f(x)|^2dx\right)^{1/2},\]
where $B(0, R)$ is the ball of radius $R$ in $\R^n$, centered at origin.
\begin{theorem} Let $\{f_k\}_{k\in \Z}$ be a
doubly infinite sequence of measurable functions on $\R^n$
satisfying  $\Delta f_k=\alpha^2 f_{k+1}$ for some $\alpha\in \R$ and
$M_2(f_k)\leq M$ for all $k\in\Z$. Then there exists  $F\in
L^2(S^{n-1})$ such that $f_0(x)=\int_{S^{n-1}}e^{i\alpha\langle
x,\omega\rangle} F(\omega)d\omega$.
\end{theorem}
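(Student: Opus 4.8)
The plan is to pass to the Fourier transform, prove that $\widehat{f_0}$ is carried by the sphere $S_\alpha=\{\xi\in\R^n:|\xi|=\alpha\}$, and then invoke the classical description of the solutions of $(\Delta+\alpha^2)u=0$ obeying a quadratic growth bound of the type $M_2(u)<\infty$. First one notes that $M_2(f_k)\le M$ forces $f_k\in L^2_{\mathrm{loc}}(\R^n)$ with at most linear growth of the averages $\int_{B(0,R)}|f_k|^2$, hence $f_k\in\mathcal S'(\R^n)$ and $\widehat{f_k}\in\mathcal S'(\R^n)$; the recursion $\Delta f_k=\alpha^2f_{k+1}$ becomes $|\xi|^2\widehat{f_k}=-\alpha^2\widehat{f_{k+1}}$, so $\widehat{f_k}=(-|\xi|^2/\alpha^2)^k\widehat{f_0}$ for $k\ge0$, while $\Delta^k f_{-k}=\alpha^{2k}f_0$ gives $\widehat{f_0}=(-|\xi|^2/\alpha^2)^k\widehat{f_{-k}}$ for $k\ge0$.

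The core of the argument is to show $\operatorname{supp}\widehat{f_0}\subseteq S_\alpha$. Fix $\chi\in C_c^\infty(\R^n)$ with $\operatorname{supp}\chi\subseteq\{|\xi|\le\alpha-\epsilon\}$; using the second identity above,
\[\langle\widehat{f_0},\chi\rangle=\langle\widehat{f_{-k}},(-|\xi|^2/\alpha^2)^k\chi\rangle=c\,\langle f_{-k},\psi_k\rangle,\qquad \psi_k:=\widehat{(-|\xi|^2/\alpha^2)^k\chi},\]
and $\psi_k\to0$ rapidly in $\mathcal S(\R^n)$, all of its seminorms being $O\!\big(k^{N}((\alpha-\epsilon)/\alpha)^{2k}\big)$ because $(|\xi|^2/\alpha^2)^k\le((\alpha-\epsilon)/\alpha)^{2k}$ on $\operatorname{supp}\chi$. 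Pairing through the Agmon--H\"ormander duality $|\langle g,\psi\rangle|\le\|g\|_{B^*}\|\psi\|_{B}$, where $\|g\|_{B^*}^2=\sup_{R\ge1}R^{-1}\int_{B(0,R)}|g|^2$, gives $|\langle\widehat{f_0},\chi\rangle|\le C_\chi\,k^{N}((\alpha-\epsilon)/\alpha)^{2k}\,\|f_{-k}\|_{B^*}$ for every $k$. A symmetric computation for $\chi$ supported in $\{|\xi|\ge\alpha+\epsilon\}$ --- dividing by $|\xi|^{2k}$ and using the first identity --- yields $|\langle\widehat{f_0},\chi\rangle|\le C_\chi\,k^{N}(\alpha/(\alpha+\epsilon))^{2k}\,\|f_k\|_{B^*}$. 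Hence, provided $\|f_{\pm k}\|_{B^*}$ grows subexponentially in $k$, both quantities vanish in the limit, so $\operatorname{supp}\widehat{f_0}\subseteq S_\alpha$, i.e. $(|\xi|^2-\alpha^2)\widehat{f_0}=0$, i.e. $\Delta f_0=-\alpha^2f_0$.

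Once $f_0$ solves $(\Delta+\alpha^2)f_0=0$ it is real-analytic, so $\|f_0\|_{B^*}<\infty$ (this uses only $M_2(f_0)<\infty$ and $f_0\in L^2_{\mathrm{loc}}$), and the classical representation theorem of Agmon and H\"ormander applies: there is $F\in L^2(S^{n-1})$ with $f_0(x)=\int_{S^{n-1}}e^{i\alpha\langle x,\omega\rangle}F(\omega)\,d\omega$, and moreover $\lim_{R\to\infty}\frac1R\int_{B(0,R)}|f_0|^2$ exists and is a fixed positive multiple of $\|F\|_{L^2(S^{n-1})}^2$; this is the asserted conclusion. Equivalently, after knowing $f_0$ is an eigenfunction one may expand $f_0=\sum_{\ell,m}c_{\ell,m}\,r^{-(n-2)/2}J_{\ell+(n-2)/2}(\alpha r)\,Y_{\ell,m}(\omega)$ (smoothness at the origin forcing the regular radial solution), observe by the asymptotics of Bessel functions that $M_2(f_0)<\infty$ is precisely $\sum_{\ell,m}|c_{\ell,m}|^2<\infty$, and recover the corresponding $F\in L^2(S^{n-1})$ --- this is the Euclidean prototype of the rank-one computation carried out on $S$ in Section~5.

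The main obstacle is the uniformity needed in the support step. Since $M_2$ is a $\limsup$, it controls $\|f_{\pm k}\|_{B^*}$ only up to a constant governed by the ``onset radius'' beyond which $R^{-1}\int_{B(0,R)}|f_{\pm k}|^2\le M^2+1$, and a priori this radius could grow rapidly with $k$. To bound it one has to use that the sequence is rigid: for $n\ge2$ the function $f_j$ is the unique $M_2$-bounded solution of $\Delta^{|j|}u=\alpha^{2|j|}f_0$ (two such solutions differ by a polyharmonic function of finite $M_2$, hence by $0$, by a Liouville-type argument), after which interior elliptic estimates along the finite segment $f_{-k},\dots,f_0$ (resp. $f_0,\dots,f_k$) give the required subexponential control of $\|f_{\pm k}\|_{B^*}$. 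All the remaining steps are either soft (temperedness) or citations of known theorems (Agmon--H\"ormander, and the radial Bessel asymptotics).
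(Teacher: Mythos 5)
Your plan is the natural one and almost surely what the paper has in mind when it refers to ``Lemma 3.2 of Strichartz and the idea of the proof of Theorem~1.1'': show via a Fourier support argument that $f_0$ is an eigenfunction, then invoke the Agmon--H\"ormander/Strichartz characterization of $M_2$-bounded solutions of $(\Delta+\alpha^2)u=0$ as Herglotz wave functions. Two steps of the sketch are, however, gaps rather than details.

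First, the passage from $\operatorname{supp}\widehat{f_0}\subseteq S_\alpha$ to $(|\xi|^2-\alpha^2)\widehat{f_0}=0$ is not an ``i.e.'': a distribution carried by the sphere may involve transverse derivatives of surface measure (e.g.\ $\partial_r\delta_{S_\alpha}$), for which $(|\xi|^2-\alpha^2)\widehat{f_0}\neq 0$. Support on $S_\alpha$ only gives $(|\xi|^2-\alpha^2)^{N+1}\widehat{f_0}=0$ for some finite $N$, and one must then kill the transversal order using the hypothesis again --- exactly as the paper does in Steps 2--3 of Theorem~5.1.1: if $(\Delta+\alpha^2)f_0\neq0$ but $(\Delta+\alpha^2)^{N+1}f_0=0$ one expands $\Delta^k$ binomially to see that $f_k=\alpha^{-2k}\Delta^k f_0$ acquires a term growing linearly (indeed like $k^{N}$) in $k$, forcing $M_2(f_k)\to\infty$, a contradiction. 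This step is missing.

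Second, you have correctly flagged the more serious issue --- that $M_2$ is only a $\limsup$, so pairing $f_{\pm k}$ with the Schwartz functions $\psi_k$ requires an a priori subexponential bound on $\|f_{\pm k}\|_{B^*}$, equivalently on the ``onset radius'' $R_k$ --- but the proposed fix does not actually produce such a bound. Uniqueness of $f_{-k}$ among $M_2$-finite solutions of $\Delta^k u=\alpha^{2k}f_0$ (true for $n\ge2$, since the ambiguity is a polynomial, and nonconstant polynomials have infinite $M_2$) is a soft statement and carries no quantitative information. Nor do interior elliptic estimates ``along the segment'' help: in the forward direction $f_k=\alpha^{-2k}\Delta^kf_0$ involves high derivatives of $f_0$, which $M_2(f_0)$ does not control on a fixed ball, and in the backward direction one is inverting $\Delta^k$, which loses rather than gains control; either way the iterated constants and the unknown boundary data on each ball defeat the estimate. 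Some genuinely new input is needed here --- either an estimate of the form $\|u\|_{B^*}\lesssim M_2(u)+(\text{something controlled by }M_2(\Delta u))$ tailored to this setting, or a reorganization of the argument (for instance, working with the C\`esaro means $g_N=(2N+1)^{-1}\sum_{|k|\le N}(-1)^kf_k$, for which $M_2((\Delta+\alpha^2)g_N)=O(1/N)$, and identifying the weak-$*$ limit with $f_0$). As written, the reduction of the support argument to a subexponential $B^*$-bound is an honest but open item, not a proof.

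Note that the paper itself offers no proof of this remark, only the citation to Strichartz, so the comparison must be against the implied argument; your proposal matches the intended strategy but leaves both of the points above unresolved.
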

The result above is an easy consequence of Lemma 3.2 in
\cite{St1} and the idea of the proof of Theorem \ref{bob}. The details is left to the interested readers.
 As mentioned earlier,  unlike uniform boundedness, the size estimate used in the
 formulation above is not translation invariant. An analogue of this result for symmetric spaces and for more general $p$ will be proved in section 6. We note here  that on DR spaces, concrete realization  of a function (satisfying the hypothesis of Theorem A or Theorem B) as Poisson transform of a function on its boundary seems
to be more involved and is still open.

There are important eigenfunctions of $\Delta$, e.g. the (powers of) Poisson kernel: $e^{(i\alpha+\rho)H(x^{-1}k)}$, which are the objects analogous to the functions $x\mapsto e^{i\langle\lambda, x\rangle}$ on $\R^n$. However they do not belong to any $L^p$, weak-$L^p$ or in general in any Lorentz spaces. They are not even bounded, unlike their Euclidean counter parts. In particular they do not satisfy the hypotheses of Theorem \ref{thm-X-2} or Theorem \ref{thm-X-p}. One of the purposes of the formulations of the Theorems in section 5 is to prevent their a priori exclusions, where we shall  weaken the hypothesis suitably using $L^p$-tempered distributions.

\noindent{\bf Acknowledgements:} Authors are grateful  to S. C. Bagchi for some very valuable help.
We would  also like to thank  M. Cowling, D M\"{u}ller and J. Sengupta for some illuminating  conversations with them.

\section{Preliminaries} The preliminaries and notation related to the semisimple Lie groups and the
associated symmetric spaces are standard and can be found for example
in \cite{Helga-2}, while that related to DR spaces can be retrieved from  \cite{ACB, ADY, Ray-Sarkar, KRS}. To make the article self-contained  we shall gather only those  results which are required for this paper.
\subsection{Generalities}

For any $p\in [1, \infty)$, let $p'=p/(p-1)$. The letters $\Z$, $\R$, $\C$ and $\mathbb H$ denote respectively the set of integers, real numbers, complex numbers and quaternions.  For
$z\in \C$,  $\Re z$ and $\Im z$  denote respectively the real and
imaginary parts of $z$. We denote the nonzero real numbers and
nonnegative integers respectively by $\R^\times$ and $\Z^+$.  For a set $A$ in a measure space we shall use $|A|$ to denote the measure of $A$. We
shall follow the standard practice of using the letters $C, C_1, C_2$
etc. for positive constants, whose value may change from one line to another.
Occasionally the constants will be suffixed to show their
dependencies on important parameters. Everywhere in this article the
symbol $f_1\asymp f_2$ for two positive expressions $f_1$ and $f_2$
means that there are positive constants $C_1, C_2$ such that
$C_1f_1\leq f_2\leq C_2f_1$.

Apart from the Lebesgue spaces we also need to deal with the Lorentz
spaces which we shall introduce briefly (see \cite{Graf, S-W,
Ray-Sarkar} for details).
Let $(M, m)$ be a $\sigma$-finite measure space, $f:M\longrightarrow \C$ be a
measurable function and $p\in [1, \infty)$, $q\in [1, \infty]$. We
define
\begin{equation*}\|f\|^*_{p,q}=\begin{cases}\left(\frac qp\int_0^\infty [f^*(t)t^{1/p}]^q\frac{dt}t\right)^{1/q}
\ \ \ \ \ \ \ \ \  \ \ \ \ \ \textup{ if } q<\infty,\\ \\ \sup_{t>0}td_f(t)^{1/p}=\sup_{t>0} t^{1/p} f^\ast(t)
 \ \ \textup{ if } q=\infty,\end{cases}\end{equation*} where
for $\alpha>0$, $d_f(\alpha)=|\{x \mid f(x)>\alpha\}|$ is the distribution function of $f$ and
$f^*(t)=\inf\{s \mid d_f(s)\le t\}$ is the {\em
decreasing rearrangement} of $f$.
We take
$L^{p,q}(M)$ to be the set of all measurable $f:M\longrightarrow \C$ such that
$\|f\|^*_{p,q}<\infty$. For $1\le p <\infty$, $L^{p,p}(M)=L^p(M)$
and $\|\cdot\|_{p,p}^\ast=\|\cdot\|_p$. By $L^{\infty, \infty}(M)$
and $\|\cdot\|^\ast_{\infty, \infty}$ we mean respectively the space
$L^\infty(M)$ and the norm $\|\cdot\|_\infty$. The space
$L^{p,\infty}(M)$ is known as the weak $L^p$-space. Following
properties of the Lorentz spaces will be required:
\begin{enumerate}
\item[(i)] For $1<p, q<\infty$, the dual space of $L^{p,q}(M)$ is $L^{p', q'}(M)$ and the dual of $L^{p,1}(M)$ is $L^{p',\infty}(M)$.
 \item[(ii)] If $q_1\le q_2\le \infty$ then
     $L^{p, q_1}(M)\subset L^{p, q_2}(S)$ and $\|f\|^\ast_{p, q_2}\le \|f\|^\ast_{p, q_1}$.
\end{enumerate}
The Lorentz  ``norm'' $\|\,\cdot\,\|^\ast_{p,q}$ is indeed only
a quasi-norm  and this makes the space $L^{p,q}(M)$ a quasi Banach
space (see \cite[p. 50]{Graf}). However for $1<p\le \infty$, there
is an equivalent norm $\|\,\cdot\,\|_{p,q}$ which makes it a Banach
space (see \cite[Theorems 3.21, 3.22]{S-W}). We shall slur over this difference and use the notation $\|\cdot\|_{p,q}$.

\subsection{Damek-Ricci spaces}
Let $\mathfrak n=\mathfrak v\oplus \mathfrak z$ be a $H$-type
Lie algebra where $\mathfrak v$ and $\mathfrak z$ are vector spaces
over $\R$ of dimensions $m$ and $l$ respectively. Indeed
$\mathfrak z$ is the centre of $\mathfrak n$ and $\mathfrak v$ is
its ortho-complement with respect to the inner product of
$\mathfrak n$. Then we  know that $m$ is even.  The group law of $N=\exp \mathfrak n$ is given by
$$(X, Y). (X', Y')=((X+X', Y+Y'+\frac 12[X, X'])\ \ X\in \mathfrak v, Y\in \mathfrak z.$$
We shall identify
$\mathfrak v$ and $\mathfrak z$ and $N$ with $\R^m$, $\R^l$ and
$\R^m\times \R^l$ respectively.  The group  $A=\{a_t=e^t \mid t\in \R\}$  acts on $N$ by nonisotropic
dilation: $\delta_{t}(X,Y)=(e^{t/2}X, e^{t}Y)$.
Let $S=NA=\{(X, Y, a_t)\mid (X, Y)\in N, t\in \R\}$ be
the semidirect product of $N$ and $A$ under the action above. The group law of $S$ is thus given by:
$$(X, Y, a_t) (X', Y', a_s)=(X+a_{t/2} X', Y+ a_{t} Y'+ \frac  {a_{t/2}}2 [X, X'], a_{t+s}).$$ It then follows that
$\delta_{t}(X,Y)=a_tna_{-t}$, where $n=(X, Y)$.
The Lie group
$S$ is  solvable, connected and simply connected  with
Lie algebra $\mathfrak s=\mathfrak v\oplus\mathfrak z\oplus\R$. It
is well known that $S$ is  nonunimodular. The
homogenous dimension of $S$ is $Q=m/2+l$. For convenience we shall also use the notation $\rho=Q/2$.
The group $S$ is equipped with the left-invariant Riemannian
metric $d$ induced by
\begin{equation*}\langle(X, Z, \ell), (X', Z', \ell')\rangle =\langle X, X'\rangle+\langle Z, Z'\rangle+\ell\ell'
\end{equation*} on $\mathfrak s$.
The associated left invariant Haar measure $dx$ on $S$   is given
by
\begin{equation} \int_S f(x)dx=\int_{N\times A}f(na_t)e^{-Q
t}dtdn,
\label{measure-NA}
\end{equation} where $dn(X,Y)=dX\,dY$ and $dX, dY, dt$ are Lebesgue
measures on $\mathfrak v$, $\mathfrak z$ and $\R$ respectively.
We denote the Laplace-Beltrami operator associated to this Riemannian structure by $\Delta$.

The group $S$ can also be  realized as the unit ball
\begin{equation*}B(\mathfrak  s)=\{(X, Z, \ell)\in \mathfrak  s\mid |X|^2+|Z|^2+\ell^2<1\}\end{equation*}
via a Cayley transform $C:S\longrightarrow     B(\mathfrak s)$ (see
\cite[p.~646--647]{ADY} for details). For an element $x\in S$, let
$$|x|=d(C(x), 0)=d(x, e)=\log \frac{1+\|C(x)\|} {1-\|C(x)\|}.$$ In particular $d(a_t, e)=|t|$. The left Haar measure in geodesic polar
coordinates is  given by (\cite[(1.16)]{ADY})
\begin{equation}dx=2^{m}(\sinh r)^{k}(\sinh \frac r2)^m\,dr\, d\omega
\label{DR-polar}
\end{equation} where $r=|x|$ and  $d\omega$ denotes the
surface measure on the unit  sphere $\partial B(\mathfrak s)$ in
$\mathfrak  s$. For convenience we shall write the corresponding integral formula
as $\int_Sf(x)dx=\int_0^\infty \int_{\partial B(\mathfrak s)}f(rw) J(r) dr\, d\omega$.
A function $f$ on $S$ is called {\em radial} if for all $x,y\in
S$, $f(x)=f(y)$ if $d(x,e)=d(y,e)$. By abuse of notation we shall sometimes consider a radial function $f$ as a function of $|x|$ and for such a function
$\int_Sf(x)dx=\int_0^\infty f(r)J(r)dr$.
 For a function space $\mathcal L(S)$ on $S$ we denote its
subspace of radial functions by $\mathcal L(S)^\#$.
For a suitable function $f$ on $S$ its radialization $Rf$ is
defined as
\begin{equation}Rf(x)=\int_{S_\nu}f(y)d\sigma_\nu(y),
\label{radialization}
\end{equation} where $\nu=|x|$ and $d\sigma_\nu$ is the
surface measure induced by the left invariant Riemannian metric on
the geodesic sphere $S_\nu=\{y\in S\mid d(y, e)=\nu\}$ normalized
by $\int_{S_\nu}d\sigma_\nu(y)=1$. It is clear that $Rf$ is a
radial function and if $f$ is radial then $Rf=f$.
The following properties of the radialization operator will be needed (see \cite{DR, ACB}):
\begin{enumerate}
\item $\langle R \phi, \psi\rangle=\langle \phi, R\psi\rangle\ \ \phi, \psi\in C^\infty_c(S)$,
\item $R(\Delta f)=\Delta(R f)$.
\end{enumerate}
Since by (1) above, $\int_Sf(x)dx=\int_0^\infty R f(r)J(r)dr$, we have
$\|R f\|_1\le \|f\|_1$.
Interpolating (\cite[p. 197]{S-W}) with the trivial $L^\infty$-boundedness of $R$ we have,
$$\|R f\|_{q,r}\le \|f\|_{q,r}, 1<q<\infty, 1\le r\le \infty.$$

To proceed towards the Fourier transform we need to introduce the notion of Poisson kernel.
The Poisson kernel $\wp:S\times N\longrightarrow \R$ is given by
$\wp(x,n)=\wp(n_1a_t, n)=P_{a_t}(n^{-1}n_1)$ where
\begin{equation}P_{a_t}(n)=P_{a_t}(X, Y)=C a_t^{Q}\left(\left(a_t+\frac{|X|^2}4\right)^2+|Y|^2\right)^{-Q},\,\, n=(X,
Y)\in N. \label{poisson}\end{equation}  The  value of $C$ is
adjusted so that $\int_NP_a(n)dn=1$ (see
\cite[(2.6)]{ACB}). For $\lambda\in \C$, we define
$\wp_\lambda(x, n)=\wp(x,n)^{1/2-i\lambda/Q}=\wp(x,n)^{-(i\lambda-\rho)/Q}$. Then it is known that for each fixed $n\in N$
$\Delta \wp_\lambda(x, n)=-(\lambda^2+\rho^2) \wp_\lambda(x, n)$.
The Poisson transform of a function $F$ on $N$ is defined as (see
\cite{ACB})
\begin{equation*}{\mathfrak P}_{\lambda}F(x)=\int_N F(n)\wp_\lambda (x, n)dn.\end{equation*} It follows that
$\Delta\mathfrak P_\lambda F= -(\lambda^2+\rho^2) \mathfrak P_\lambda F$.
The elementary spherical function $\phi_\lambda(x)$ is  given by
$$\phi_\lambda(x)=
\int_N\wp_\lambda(x,n)\wp_{-\lambda}(e,n)dn.$$ It follows that
$\phi_\lambda$ is a radial eigenfunction of  $\Delta$  with eigenvalue $-(\lambda^2+\rho^2)$
satisfying $\phi_\lambda(x)=\phi_{-\lambda}(x),
\phi_\lambda(x)=\phi_\lambda(x^{-1})$ and $\phi_\lambda(e)=1$. Since
$\wp_{-i\rho}(x,n)\equiv 1$ for all $x\in S, n\in N$  and
$\wp_{i\rho}(x,n)=\wp(x,n)$, it follows that
$$\phi_{-i\rho}(x)=\int_N\wp_{i\rho}(e,n)=\int_NP_1(n)dn=1.$$
We have the following asymptotic estimate of $\phi_\lambda$ (see \cite{ADY}).
For $p\in (0, 2]$, let $\gamma_p=2/p-1$.  Then,
\begin{equation}|\phi_{\alpha+i\gamma_p\rho}(x)|\asymp e^{-(2\rho/p')  |x|}, \ \ \alpha\in \R, 0<p< 2.\label{exact-estimate-1}\end{equation}
From this and (\ref{DR-polar}) it follows that $\phi_\lambda\in L^{p', \infty}(S)$ (respectively $\phi_\lambda\in L^{p', 1}(S)$) if and only if $|\Im \lambda|\le \gamma_p\rho$
(respectively $|\Im \lambda|<\gamma_p\rho$) for $1<p<2$ (see \cite{Ray-Sarkar} for more details).
The estimate above degenerates when $p=2$, i.e. when $\gamma_p=0$ and in this case we have  $\phi_0(x)\asymp (1+|x|) e^{-\rho |x|}$.
If $\lambda\in \R^\times$ and $t\ge 1$ then the Harish-Chandra series for $\phi_\lambda$ implies,
\begin{equation} \phi_\lambda(a_t)=e^{-\rho t}[\hc(\lambda)e^{i\lambda t}+c(-\lambda)e^{-i\lambda t}+E(\lambda,
t)], \text{ where } |E(\lambda, t)|\le C_{\lambda} e^{-2t}.
 \label{phi-delta-asym}
\end{equation}
See \cite[(3.11)]{Ion-Pois-1}) for  a proof of the above for the symmetric spaces. The proof works
{\em mutatis mutandis} for general Damek-Ricci spaces.
From this estimate it follows that $\phi_\lambda\in L^{2, \infty}(S)$ for any $\lambda\in \R^\times$.

We define the spherical Fourier transform $\what{f}$ of a suitable
radial function $f$ as
\begin{equation*}\what{f}(\lambda)=\int_Sf(x)\phi_\lambda(x)dx,\end{equation*} whenever the integral converges.
For $1\le p\le 2$, the $L^p$-Schwartz space $C^p(S)$ is defined (see
\cite{ADY, di-B}) as the set of $C^\infty$-functions on $S$ such that
$$\gamma_{r,D}(f)=\sup_{x\in S}|Df(x)|
\phi_0^{-2/p}(1+|x|)^r<\infty,$$ for all nonnegative integers $r$
and left invariant differential operators $D$ on $S$. Let
$C^p(S)^\#$ be the set of radial functions in $C^p(S)$.    We define the strip
$S_p=\{z\in \C\mid |\Im z|\leq \gamma_p\rho\}$ where $\gamma_p=2/p-1$.  Let
$S_p^\circ$ and $\partial S_p$ respectively be the interior and the
boundary of the strip and  $C^p(\what{S})^\#$ be  the set of even holomorphic functions on
$S_p^\circ$ which are continuous on $\partial S_p$ and satisfy for all nonnegative integers $l, m$,
$$\nu_{l,m}(f)=\sup_{\lambda\in S_p}|\frac{d^l}{d\lambda^l}f(x)|
(1+|\lambda|)^m <\infty.$$ When
$p=2$ then the strip degenerates to the line $\R$ and $C^2(\what{S})^\#$ is defined as  the set of even
Schwartz class functions on $\R$. We topologize $C^p(S)$ and $C^p(\what{S})^\#$ by the seminorms $\gamma_{r, D}$ and by $\nu_{l,m}$ respectively. It is known that  (see
\cite{ADY, di-B})   $f\mapsto \what{f}$ is a topological
isomorphism from $C^p(S)^\#$ to $C^p(\what{S})^\#$ for $1\le p \le 2$.

Apart from the weak $L^p$-norm, we  need some size estimates which are close to $L^p$. Let $B(0, R)=\{x\in S\mid |x|<R\}$ be the geodesic ball of radius $R$. For a function $u$ on $S$ and  $1<p<\infty, 1\le q<\infty$ we  define,
\begin{eqnarray}
M_p(u)&=&\left(\limsup_{R\to\infty}\frac{1}{R}\int_{B(0,R)}|u(x)|^pdx\right)^{1/p},\\
\mathcal A_{p,q}(u)&=&\|\mathcal A_q(u)\|_{p, \infty}, \text{ where } \mathcal A_q(u)(x)=\left(\int_{\partial B(\mathfrak s)}|u(r\omega)|^qd\omega\right)^{1/q}\label{npq-DR}.
\end{eqnarray}

\subsection{Symmetric spaces}

We recall that a rank one Riemannian symmetric space of noncompact type  $X$ can be realized as the quotient space $G/K$ where $G$ is a  connected noncompact semisimple Lie group with finite center and of real rank one and $K$ is a maximal compact subgroup of $G$. We consider the Iwasawa decomposition $G=NAK$. Then   the subgroup $N$  is a $H$-type group. Therefore a rank one Riemannian symmetric space $X=G/K$ can be identified as $NA$ through this  decomposition and  the space $X$ accommodates itself inside the DR spaces as an ``Iwasawa $NA$ group''. The $G$-invariant measure $dx$ on $X$ coincides with the left invariant Haar measure  on $X$ viewed as a $NA$ group. The canonical Riemannian  structure on $X$ as $NA$ group also coincides with the Riemannian structure induced by the Killing form, precisely $\langle Y_1, Y_2\rangle=-B(Y_1, \theta Y_2)$ where $B$ and $\theta$ respectively are the Killing form and Cartan involution of $\mathfrak g$, the Lie algebra of $G$ and $Y_1, Y_2\in \mathfrak g$.
 A function on $X$ can be identified with a function on $G$ which is invariant under right $K$-action.  Through this identification semisimple machineries can be brought forward to $X$, which we shall mention below.

 The group $G$ (and in particular its subgroup $K$) acts naturally on $X$ by left translations. Let $M$ be the centralizer of $A$ in $K$.  Apart from the Iwasawa decomposition $G=NAK$ mentioned above, we shall  use the Iwasawa decomposition $G=KAN$. Through the action of $A$ on $N$ mentioned in the previous subsection (in other words  since  $A$ normalizes $N$) $G$ admits  decompositions $G=KNA$ and $G=ANK$.  It also has the  polar decomposition
$G=K\overline{A^+}K$, where $\overline{A^+}$ is identified with nonnegative real numbers. Using the Iwasawa decomposition $G=KAN$,  we write an
element $x\in G$ uniquely as $k(x)\exp H(x)n(x)$ where $k(x)\in K, n(x)\in N$ and $H(x)\in \mathfrak a$, where $\mathfrak a$ is the Lie algebra of $A$. Let $dg$,  $dk$ and
$dm$ be the Haar measures of $G$,  $K$ and $M$ respectively
with $\int_K\,dk=1$ and $\int_M\,dm=1$ and $dn$ be  as given in subsection 2.2. We have the following
integral formulae corresponding to the two Iwasawa decompositions $G=KAN$, $G=NAK$ and the polar decomposition, which hold for any integrable function:
\begin{equation}
\int_Gf(g)dg=C_1\int_K\int_\R\int_N f(ka_tn)e^{2\rho t}\,dn\,dt\,dk, \int_Gf(g)dg=C_2\int_K\int_\R\int_{N}
f(na_tk)e^{-2\rho t}\,dn\,dt\,dk,
\label{Iwasawa-1}
\end{equation}
and
\begin{equation}
\int_Gf(g)dg=C_3\int_K\int_0^\infty\int_K f(k_1a_tk_2) (\sinh
t)^{m_\gamma}(\sinh 2t)^{m_{2\gamma}}\,dk_1\,dt\,dk_2, \label{polar}
\end{equation}
where $m_\gamma$ and $m_{2\gamma}$ are the dimensions of the root spaces  $\mathfrak g_\gamma$ and $\mathfrak g_{2\gamma}$ respectively,
$\gamma$ being the unique positive indivisible root and $\rho=\frac 12 (m_\gamma+2m_{2\gamma})\gamma$, where $\gamma$ is treated as a positive number by $\gamma(1)=1$.
The constants  $C_1, C_2, C_3$ depend on the normalization of the Haar measures involved.
The formulae above are indeed coincides with (\ref{measure-NA}) and (\ref{DR-polar}) (with $\rho=Q$) when $G/K$ is treated as a $NA$ group. The apparent mismatch
is due to the fact that in  the former  we take $\gamma(1)=1/2$ instead of $\gamma(1)=1$,   to make our formulae consistent with the literature.
As in the previous subsection for a function on $X$, we shall use the notation $J(t)$ to rewrite (\ref{polar}) as $\int_X f(x) dx=\int_K\int_0^\infty f(k_1a_t) J(t)dt\, dk$.

We also note that using well  known estimate  $\sinh t\asymp t
e^t/(1+t), t\ge 0$ it follows from  (\ref{polar}) that
\begin{eqnarray}
\int_G|f(g)|dg&\asymp&C_3\int_K\int_0^1\int_K |f(k_1a_tk_2)|
t^{d-1}\,dk_1\,dt \,dk_2\nonumber \\&+&C_4 \int_K\int_1^\infty\int_K
|f(k_1a_tk_2)|e^{2\rho t}\,dk_1\,dt\,dk_2 \label{polar-2}
\end{eqnarray} where $d=m_\alpha+m_{2\alpha}+1$.

It is well known that the {\em maximal distinguished boundary} (or boundary for short) of the symmetric space  $X=G/K$ has two different, albeit essentially equivalent, realizations which we obtain  through the Iwasawa decomposition of  $G=KAN$. The compact boundary is $K/M$ and the noncompact one is the nilpotent group $N$. There is a natural correspondence between these two boundaries if we leave out  an appropriate  set of measure zero.
If  $G/K$ is realized as an Iwasawa $NA$ group, then as done in the previous subsection,  we consider $N$ as the boundary and  deal with the Poisson transform $\wp_\lambda$.  We shall define below  the Poisson transform $\mathcal P_\lambda$ considering the compact boundary $K/M$. We refer to \cite[pp. 418--419]{ACB}, for  relation between $\mathcal P_\lambda$   and $\wp_\lambda$ defined in the previous subsection.

For $\lambda\in \C$, the complex power of the
Poisson kernel: $x\mapsto  e^{-(i\lambda+\rho) H(x^{-1})}$ is an
eigenfunction of the Laplace Beltrami operator $\Delta$ with
eigenvalue $-(\lambda^2+\rho^2)$. For any $\lambda\in \C$ and $F\in
L^1(K/M)$ we define the  Poisson transform $\mathcal P_\lambda$ of $F$
by (see \cite[p. 279]{Helga-2}) by
$$\mathcal P_\lambda  F(x)=\int_{K/M} F(k)e^{-(i\lambda+\rho) H(x^{-1}k)} dk \text{ for  } x\in X.$$ Then,
$$\Delta \mathcal P_\lambda F=-(\lambda^2+\rho^2)\mathcal P_\lambda F.$$

We recall that for these Iwasawa $NA$ groups a  function is radial if and only if $f(kx)=f(x)$ for all $k\in K$ and $x\in X$. The radialization operator $R$ takes the simpler form: $Rf(x)=\int_Kf(kx)dk$.

For any $\lambda\in \C$  the elementary spherical function
$\phi_\lambda$ defined in subsection 2.2 has the following alternative expression,
$$\phi_\lambda(x)=\mathcal P_\lambda 1(x)=\int_{K/M} e^{-(i\lambda+\rho)H(xk)}\,dk \text{ for all } x\in G,$$ where by $1$ we denote the constant function $1$ on $K/M$ (see \cite{ACB, KRS}).
It is clear that on $X$ the function (defined in subsection 2.2) $\mathcal A_q(u)(x)=\left(\int_{K/M}|u(kx)|^qdk\right)^{1/q}$.

\section{Sharpness of  Theorems \ref{thm-X-2}, \ref{thm-X-p}}
In this section we shall try to motivate the formulation of the theorems
stated in the introduction and establish their sharpness by answering the following natural questions:

(a) Does Theorem A hold true when $\alpha=0$? Does Theorem B hold true when $\alpha=\beta\pm i\gamma_p\rho$ for $\beta\neq 0$?

(b) In Theorem A (respectively in Theorem B),  is it possible to substitute $L^{2,\infty}$-norm (respectively $L^{p', \infty}$-norm) by any other Lorentz norms?

(c) Is it necessary to use both positive and negative integral powers of $\Delta$ in Theorem A ?
\subsection{Estimates of the Poisson transform}
We need to start with the basic $L^p$-behaviour of the Poisson transform as  the  Poisson transforms (of functions or functionals) form the  set of eigenfunctions of the
Laplacian.  From the Kunze-Stein
phenomenon and the Herz's principe de majoration (see \cite{Lo-Ry,
Cow-Herz}) it follows that for $1\le p<2$, $p\le q\le p'$ and $\alpha\in \R$,
the Poisson transform  on $X$ satisfies the following estimates:
\begin{equation}\|\mathcal P_{\alpha+i\gamma_q\rho}F\|_{p', \infty}\le C \|F\|_{L^{q'}(K/M)}.
\label{poisson-estimate-X}
\end{equation}

Recently similar estimates for the  Damek-Ricci spaces $S$ were also obtained by the authors of this paper (see \cite{KRS, Ray-Sarkar}): For $1\le p<2$, $p\le q\le p'$ and $\alpha\in \R$,
\begin{equation}\|{\mathfrak P}_{\alpha+i\gamma_q\rho}F\|_{p', \infty}\le C \|F\|_{L^{q'}(N)}.
\label{poisson-estimate-S}
\end{equation}

From the estimate of $\phi_0$ given in section 2, it follows that
$\phi_0\not\in L^{2, \infty}(S)$ which obviates an
 analogue of (\ref{poisson-estimate-X}) and (\ref{poisson-estimate-S}) for $p=2$.  However, from (\ref{phi-delta-asym}), it follows that for real $\lambda\neq 0$,
 $\phi_{\lambda}\in L^{2, \infty}(S)$. Therefore one would expect an inequality  on $X$ and $S$ respectively of the form:
$$\|\mathcal P_{\lambda}F\|_{2, \infty}\le C(\lambda) \|F\|_{L^{2}(K/M)},
\|{\mathfrak P}_{\lambda}F\|_{2, \infty}\le C(\lambda)
\|F\|_{L^{2}(N)}\text{ for } \lambda\in \R^\times.$$ At this point
of time such an inequality is not known.
It is
well known that there does not exists any eigenfunction of $\Delta$
which is in $L^p(S)$ for $p\le 2$.
But we have observed above that there are $L^{2, \infty}$-eigenfunctions of $\Delta$
(e.g. $\phi_\lambda$, $\lambda\in \R^\times$).  We also have the following result.

\begin{proposition} \label{negative-results}
Let $u$ be a nonzero function on $X$.

{\rm(i)} If $\Delta u = -\rho^2 u$ then $u\not\in L^{2, \infty}(X)$.

{\rm(ii)} If $\Delta u = -(\lambda^2+\rho^2)u$ for some $\lambda\in \R^\times$, then $u\not\in L^{2,q}(X)$
for $q<\infty$.

{\rm(iii)} If for some $1<p<2$, $\Delta u = -[(\beta\pm i\gamma_p\rho)^2+\rho^2]u$ for  $\beta\in \R$ then $u\not\in L^{q', r}(X)$ if one of these two conditions is satisfied: {\em (a)} $q>p$,  {\em (b)} if $q=p$ and $r<\infty$.
\end{proposition}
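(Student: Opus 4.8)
The plan is to exploit the tension between an $L^p$-type membership of an eigenfunction and the known sharp asymptotics of elementary spherical functions, combined with the Kunze--Stein phenomenon (Herz's principe de majoration). The unifying device will be the following: if $u$ satisfies $\Delta u = -(\lambda^2+\rho^2)u$ and $u$ lies in some Lorentz space $L^{s,r}(X)$, then for a suitable nonnegative radial bump $\chi \in C_c^\infty(X)^\#$ the convolution $u * \chi$ is still an eigenfunction with the same eigenvalue (since $\Delta$ and radial convolution commute), it is smooth, and by Hölder's inequality in Lorentz spaces (property (i), (ii) of section 2.1) together with the Kunze--Stein convolution inequality $\|u*\chi\|_{s,r} \lesssim \|u\|_{s,r}\|\chi\|_{?}$ valid for $s>2$, it remains bounded in the same space. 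A smooth radial-ized average of $u*\chi$ — more precisely, replacing $u$ by $R(u*\chi)$ does not work since $u$ need not be radial, so instead I would test against $K$-translates — reduces matters to estimating $|\,(u*\chi)(ka_t)\,|$ as $t\to\infty$. The key identity is that for fixed $\chi$, the function $g \mapsto (u*\chi)(g)$ when expanded against the spherical function machinery picks up $\widehat{\chi}(\lambda)\phi_\lambda$ behavior; choosing $\chi$ with $\widehat\chi(\lambda)\neq 0$, one gets that $u*\chi$ inherits the lower asymptotic growth $\gtrsim$ (the size of $\phi_\lambda$) along $A^+$ in an averaged sense.

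Concretely, for part (i), $\lambda = 0$: here $\phi_0(a_t) \asymp (1+t)e^{-\rho t}$, and the point is that any eigenfunction with eigenvalue $-\rho^2$, after smoothing, cannot decay faster than $\phi_0$ in an $L^2$-averaged-over-$K$ sense; since $\phi_0 \notin L^{2,\infty}(X)$ (the extra factor $(1+t)$ defeats the borderline weak-$L^2$ integrability, as $(1+t)e^{-\rho t}$ against $J(t)\asymp e^{2\rho t}$ gives $d_u(\alpha)\asymp$ a quantity growing like a power of $\log(1/\alpha)$, hence $\alpha^2 d_u(\alpha)\to\infty$), we conclude $u \notin L^{2,\infty}(X)$. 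For part (ii), $\lambda \in \R^\times$: now $\phi_\lambda(a_t) = e^{-\rho t}[c(\lambda)e^{i\lambda t} + c(-\lambda)e^{-i\lambda t} + E(\lambda,t)]$ oscillates with modulus $\asymp e^{-\rho t}$ along a density-one set of $t$, so $\phi_\lambda \in L^{2,\infty}(X)$ but $\phi_\lambda \notin L^{2,q}(X)$ for any $q<\infty$ because the decreasing rearrangement behaves like $t^{-1/2}$ exactly at the borderline and the $L^{2,q}$ integral $\int_0^\infty (f^*(t)t^{1/2})^q\,dt/t$ diverges logarithmically. The transference of this from $\phi_\lambda$ to an arbitrary eigenfunction $u$ uses the smoothing/Kunze--Stein step above: $u*\chi \in L^{2,q}$ would force an impossible decay. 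Part (iii) is the $L^p$ analogue with $1<p<2$: the relevant spherical function is $\phi_{\beta \pm i\gamma_p\rho}$, with $|\phi_{\beta+i\gamma_p\rho}(a_t)| \asymp e^{-(2\rho/p')t}$ by \eqref{exact-estimate-1}, which lies exactly on the boundary $\phi_{\beta\pm i\gamma_p\rho}\in L^{p',\infty}(X)\setminus L^{p',1}(X)$, and not in $L^{q',r}(X)$ when $q>p$ (decay too slow relative to the smaller target exponent) nor in $L^{p',r}(X)$ for $r<\infty$ (borderline rearrangement, as in (ii)). The Kunze--Stein inequality \eqref{poisson-estimate-X} is available for $q\le p'$, which is what licenses the convolution step in the range needed.

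For the transference argument to actually produce a \emph{lower} bound on the size of $u$, I would argue by contradiction and use the reproducing/eigenfunction structure more carefully: if $u$ is a nonzero eigenfunction, pick $k_0\in K$ and $x_0\in X$ with $u(k_0 x_0)\neq 0$; writing $v = u*\chi$ for $\chi$ radial with $\widehat\chi(\lambda)=1$, we still have $v\neq 0$ (choosing $\chi$ an approximate identity) and $v$ smooth. Then the Harish-Chandra expansion applied to the $K$-average $\int_K v(k \cdot)\,dk = \widehat\chi(\lambda)\,(\text{spherical part})$ — actually the cleaner route is to fix the $K$-type: project $v$ onto a $K$-isotypic component on which it is nonzero (as indicated in the introduction, for symmetric spaces one reduces to $K$-isotypic components using $G$-invariance of $\Delta$), and on such a component the radial part satisfies a hypergeometric ODE whose solutions have the asymptotics $e^{(\pm i\lambda - \rho)t}$ dictated by \eqref{phi-delta-asym}; a nonzero solution cannot have both indicial exponents suppressed, so $|v(k a_t)| \gtrsim e^{-\rho t}$ (resp.\ $e^{-(2\rho/p')t}$ in case (iii)) on a large set, which combined with $J(t)\asymp e^{2\rho t}$ gives the claimed failure of Lorentz membership. \textbf{The main obstacle} I anticipate is making the lower-bound/transference step rigorous for a \emph{general} (a priori merely measurable, or distributional) eigenfunction $u$ that is not assumed radial or of finite $K$-type: one must justify that smoothing by convolution and projecting onto $K$-types does not kill $u$, and that the resulting radial-sector asymptotics genuinely reflect the global Lorentz size of $u$ rather than of some smoothed proxy. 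This is where a Fatou-type or boundary-value argument (à la Ionescu \cite{Ion-Pois-1}, or the Lohou\'e--Rychener characterization \cite{Lo-Ry}) enters, identifying $u$ as a Poisson transform of a boundary distribution and reading off its growth from the non-triviality of that distribution; handling the $\lambda=0$ degenerate case, where the two indicial exponents coincide and the $(1+t)$ factor appears, will require the most care.
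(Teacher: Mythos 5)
Your overall strategy --- reduce everything to asymptotics of the spherical function $\phi_\lambda$ --- is the right one, and you correctly diagnose the crux: transferring the spherical-function lower bound to an arbitrary (possibly non-radial, possibly merely measurable) eigenfunction $u$. But you leave exactly that step open, labeling it ``the main obstacle,'' and the machinery you sketch for it (convolution with a radial bump, Kunze--Stein, Fatou/boundary-value theory, $K$-isotypic projection) is far heavier than necessary and never actually carried out. The paper resolves the obstacle with a single elementary identity, the generalized mean value property for eigenfunctions (Helgason, \cite[p.~402]{Helga-3}): if $\Delta u=-(\lambda^2+\rho^2)u$ and $u(x_0)\neq 0$, then the $K$-average
\[
f(y)=\int_K u(x_0ky)\,dk \quad\text{equals}\quad u(x_0)\,\phi_\lambda(y).
\]
Since left translation by $x_0$ preserves the $G$-invariant measure (hence every Lorentz quasi-norm on $X$) and $K$-averaging only decreases it, membership of $u$ in any $L^{s,r}(X)$ would force $\phi_\lambda\in L^{s,r}(X)$. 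That turns the whole proposition into a question about $\phi_\lambda$, which for parts (i) and (iii) is settled immediately by the estimates already recorded in Section~2. No smoothing, no Kunze--Stein, no Poisson-transform representation is needed, and the ``a priori merely measurable'' worry evaporates because the averaging identity holds pointwise for any $C^\infty$ (hence any distributional) eigenfunction.

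For part (ii) you are right that the oscillation of $\phi_\lambda$ ($\lambda\in\R^\times$) is the issue; your heuristic that the decreasing rearrangement is $\asymp t^{-1/2}$, so the $L^{2,q}$ integral diverges logarithmically for $q<\infty$, is sound in spirit but you would need to justify that the zero set of $\cos(\lambda t+\arg c(\lambda))$ does not spoil the lower bound. The paper sidesteps this by a cleaner algebraic device: after discarding the exponentially small error $E(\lambda,t)$, it observes that $t\mapsto f(t+\pi/2\lambda)$ is bounded on $L^{2,q}((1,\infty),e^{2\rho t}dt)$ (interpolating the trivial $L^p$ bounds), and a linear combination of $g(t)=2e^{-\rho t}\Re\bigl(c(\lambda)e^{i\lambda t}\bigr)$ with its quarter-period translate isolates first $e^{-\rho t}\sin\lambda t$, then $e^{-\rho t}\cos\lambda t$, and finally $e^{-\rho t}$ itself, which manifestly fails to be in $L^{2,q}$ against $e^{2\rho t}dt$ for any finite $q$. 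So: your proposal identifies the correct target but neither completes the reduction nor the borderline case (ii); both are handled in the paper by arguments that are substantially simpler than what you outline.
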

\begin{proof}
We suppose that $\Delta u= -(\lambda^2+\rho^2) u$ for some $\lambda\in \C$ and $u(x_0)\neq 0$ for some $x_0\in X$.
Then $f(y)=\int_Ku(x_0ky)dk$ satisfies $f(y)=\phi_\lambda(y)u(x_0)$ (see \cite[p. 402]{Helga-3}) and hence $f$ is a  radial eigenfunction of $\Delta$ with the eigenvalue $-(\lambda^2+\rho^2)$.
Thus to prove  (i) and (iii) respectively, it is  enough to show that $\phi_0\not\in L^{2, \infty}(X)$ and
 $\phi_{\beta\pm i\gamma_p\rho}\not\in L^{q',r}(X)$ for $q,r$ as in (iii), which are  clear from the estimates of $\phi_0$ and $\phi_{\beta-i\gamma_p\rho}$ given in section 2. Similarly for (ii) it is enough to show that $\phi_\lambda$ with $\lambda\in \R^\times$ is not in $L^{2,q}(X)$ for  any $q<\infty$, which we shall prove below.

We note that  $\phi_\lambda\not\in L^{2, q}(X)$ is equivalent to
$\phi_\lambda\not\in L^{2, q}(\R^+, J(t)dt)$ where $J(t)$ is
the Jacobian in the polar decomposition (see (\ref{polar-2})). Since
$\phi_\lambda$ is a continuous function, and $J(t)\asymp e^{2\rho t}$
when $t\ge 1$, it suffices to show that for $t\ge 1$, $t\mapsto
\phi_\lambda(a_t)$ is not in $L^{2, q}((1, \infty), e^{2\rho t}dt)$.
 Since,
$E(\lambda,  t)\le C_\lambda e^{-2t}$ for $t\ge 1$ in
(see (\ref{phi-delta-asym})), the assumption that $\phi_\lambda(a_t)$ is
in $L^{2, q}((1, \infty), e^{2\rho t}dt )$ will imply that
$g(t)=e^{-\rho
t}(\hc(\lambda)e^{it\lambda}+\hc(-\lambda)e^{-it\lambda})=e^{-\rho
t}2\Re(\hc(\lambda)e^{it\lambda})$ is in $L^{2, q}((1, \infty),
e^{2\rho t}dt)$.
Let $\hc(\lambda)=a(\lambda)+ib(\lambda)$. Then $g(t)=2e^{-\rho
t}(a(\lambda)\cos \lambda t-b(\lambda)\sin \lambda t)$. Since
the translation operator  $f(t)\mapsto f(t+\pi/2\lambda)$ is $(p,p)$
for any $p\ge 1$ in the measure space $((1, \infty), e^{2\rho
t}dt)$, we get  by interpolation (see \cite[p. 197]{S-W}) that the
translation operator  is bounded on $L^{2, q}(X)$. Thus we get
$g(\bullet+\pi/2\lambda)\in L^{2,q}((1, \infty), e^{2\rho t}dt)$ and
hence $b(\lambda)g(t)+a(\lambda)g(t+\pi/2\lambda)\in L^{2,q}((1,
\infty), e^{2\rho t}dt)$. Since $g(t+\pi/2\lambda)=-2e^{-\rho
t}(a(\lambda)\sin\lambda t+ b(\lambda)\cos \lambda t)$, it follows that
$b(\lambda)g(t)+a(\lambda)g(t+\pi/2\lambda)=-2e^{-\rho t}
(b(\lambda)^2+a(\lambda)^2)\sin \lambda t \in L^{2,q}((1, \infty),
e^{2\rho t}dt)$, i.e. $e^{-\rho t}\sin \lambda t\in L^{2,q}((1, \infty),
e^{2\rho t}dt)$. Similarly we can show that $e^{-\rho t}\cos
\lambda t\in L^{2, q}((1, \infty), e^{2\rho t}dt)$. These two together imply that
$e^{-\rho t} \in L^{2,q}((1, \infty), e^{2\rho t}dt)$, which is false
as can be verified by direct computation.
\end{proof}

\begin{remark}
From (i) of the proposition above it is straightforward to see that the hypothesis of
Theorem A with $\alpha=0$ cannot yield any nonzero eigenfunction of $\Delta$. Similarly from (ii) of the proposition it follows that Theorem A with $L^{2,q}$-norm, $q<\infty$ replacing the $L^{2,\infty}$-norm will not have any nonzero solution either. In the same way
(iii) discards the use of $L^{p',r}$-norm with $r<\infty$ and that of $L^{q',r}$-norm with $q>p$ in Theorem B.
It is also noted above that $p<2$ cannot be used in Theorem A and Theorem B.
These along with the counter examples constructed in the next subsection  will complete answering the questions.
\end{remark}

\subsection{Counter examples}
Let  $1<p<2.$ We will show that   if we substitute
$i\gamma_{p'}\rho$ by $\beta\pm i\gamma_{p'}\rho$  in Theorem B with
$\beta\in\R^\times$, then there exists a measurable function $f$
satisfying the hypothesis of the theorem but $f$ is not an
eigenfunction of $\Delta.$ We shall consider only $\alpha=\beta+i\gamma_p\rho$.
The case $\alpha=\beta-i\gamma_p\rho$ will be analogous. To show this, we will appeal to the
description of the $L^{p_1}$-spectrum of $\Delta$ for $1\leq
p_1<\infty.$ It is known that (see \cite{Lo-Ry-2,  Tay, ADY})
the $L^{p_1}$-spectrum $\sigma_{p_1}$ of $\Delta$ is the image of
the set $S_{p_1}$ under the map $\Lambda(z)=-(z^2+\rho^2).$
Precisely, $\sigma_{p_1}$ is given by the parabolic region
\begin{equation}\sigma_{p_1}=\{-(z^2+\rho^2)\mid |\Im z|\leq
|\gamma_{p_1}\rho|\}=\sigma_{p_1'}.\label{spectrum}\end{equation}
 We note that for $p$ as above if $p<q<2$
then $\gamma_q<\gamma_p$, hence  $\Lambda
(\beta+i\gamma_q\rho)\in \sigma_p$  and
$\phi_{\beta+i\gamma_q\rho}\in L^{p', 1}(X)\subset L^{p',\infty}(X)$ (see section 2).
Since $\beta\neq 0$ we can choose $r$ such that $p<q<r<2$ and real numbers $s,t$
 such that $|\Lambda (s+i\gamma_{q'}\rho )|=|\Lambda
(t+i\gamma_{r'}\rho )|=|\Lambda (\beta+i\gamma_{p'}\rho )|$.
Figure 1 below   explains the situation.
\begin{figure}[!ht]
$$
\begin{minipage}[b]{0.5\linewidth}
\centering
\includegraphics[width=3.2in, height=2.2in,
angle=0]{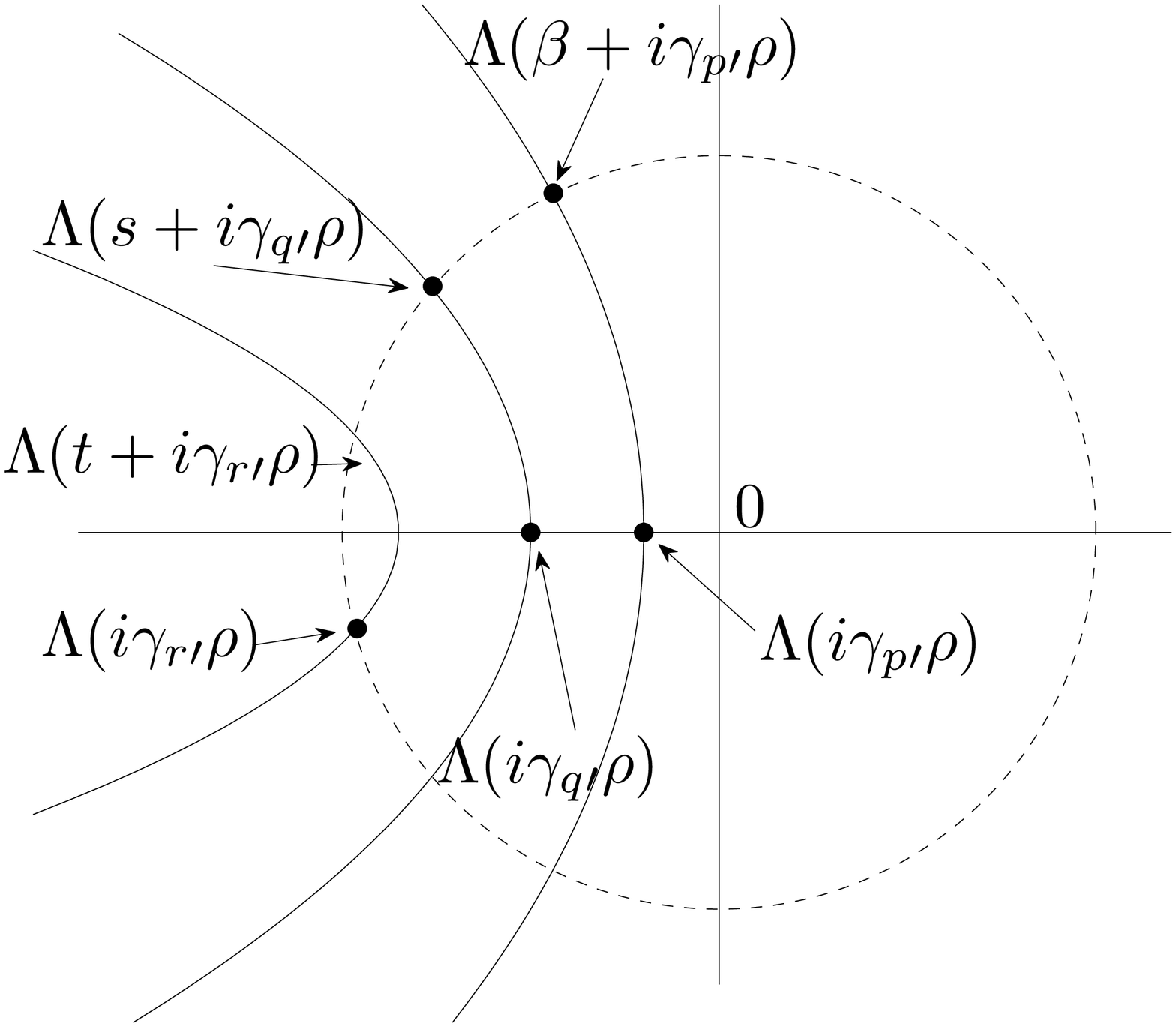}
\caption{$  1 < p < q< 2$}
\label{fig:fig1}
\end{minipage}
\hspace{0.1cm}
\begin{minipage}[b]{0.5\linewidth}
\centering
\includegraphics[width=3.2in, height=2.2in, angle=0]{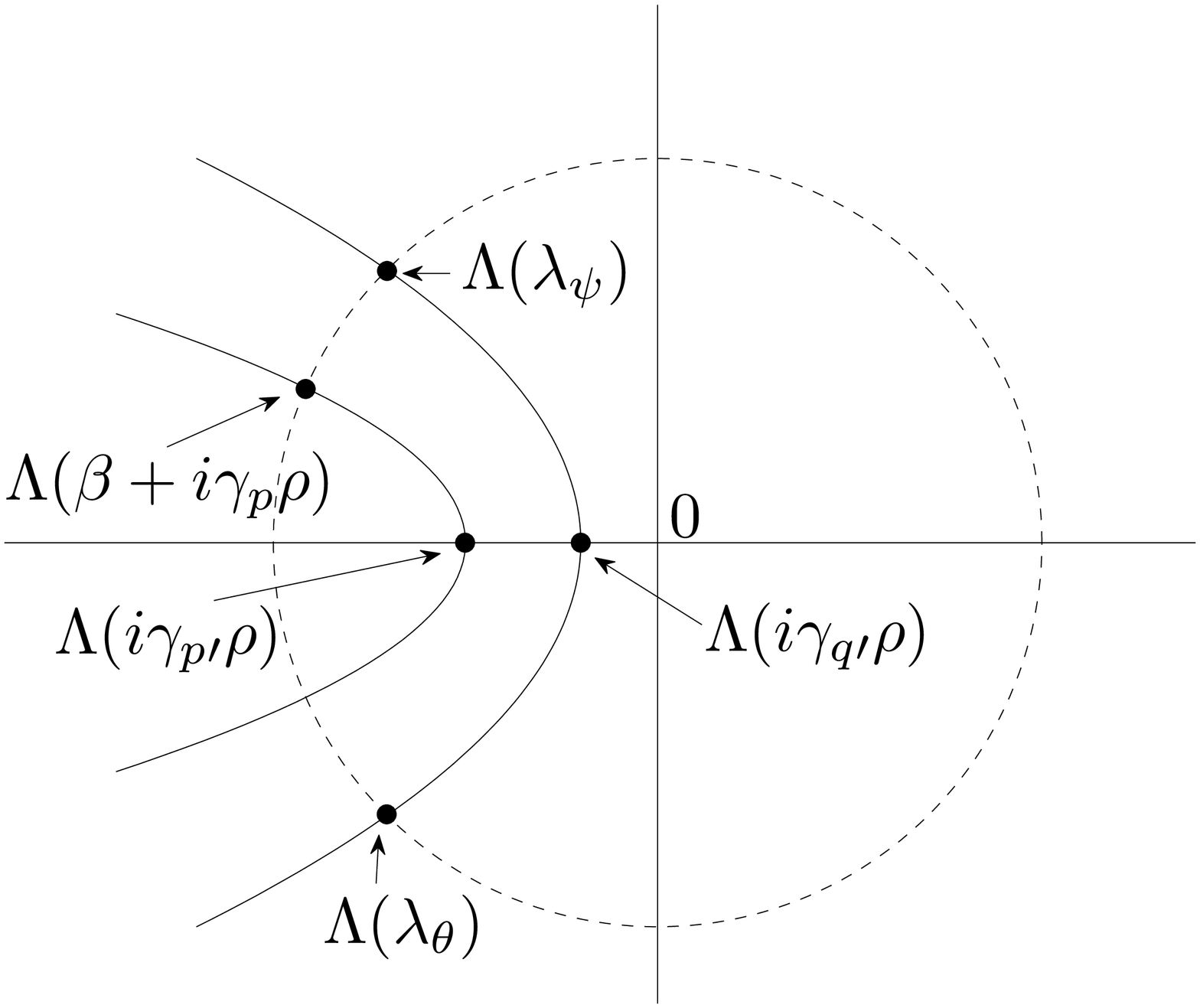}
\caption{$1<q<p<2$}
\label{fig:fig2}
\end{minipage}
$$
\end{figure}

Hence there exists $\theta$ and $\psi$ in $(0, 2\pi )$ such that
\begin{equation}\Lambda (s+i\gamma_{q'}\rho )e^{-i\theta}=\Lambda
(t+i\gamma_{r'}\rho )e^{-i\psi}=\Lambda (\beta+i\gamma_{p'}\rho
).\label{circle}\end{equation} We
define $$f(x)=\phi_{t+i\gamma_{r'}\rho}(x)+\phi_{s+i\gamma_{q'}\rho}(x).$$  Using (\ref{circle}) it
follows that
\begin{eqnarray*}\Delta^k f(x)&=&(\Lambda(t+i\gamma_{r'}\rho))^k\phi_{t+i\gamma_{r'}
\rho}+(\Lambda(s+i\gamma_{q'}\rho))^k\phi_{s+i\gamma_{q'}\rho}\\
&=&e^{ik\psi}(\Lambda(\beta+i\gamma_{p'}\rho))^k\phi_{t+i\gamma_{r'}\rho}
+e^{ik\theta}(\Lambda(\beta+i\gamma_{p'}\rho)^k\phi_{s+i\gamma_{q'}\rho}\\
&=&(\Lambda(\beta+i\gamma_{p'}\rho))^k f(x).
\end{eqnarray*}
Therefore $|\Delta^kf|\leq |\Lambda(\beta+i\gamma_p\rho)|^k(|\phi_{i\gamma_r\rho}|+|\phi_{i\gamma_q\rho}|$. Since both $\phi_{i\gamma_r\rho}$ and $\phi_{i\gamma_q\rho}$ are in $L^{p', \infty}(X)$ (see section 2), $f$ satisfies the hypothesis of Theorem B.
 It is clear
that $f$ is not an eigenfunction of $\Delta.$

Next we shall show that in Theorem B, if we take $\alpha=\beta\pm i\gamma_p\rho$, $\beta\in \R$ and substitute
$L^{p', \infty}$-norm by $L^{q', r}$-norm with $1\le q<p<2$, then there are functions $f$ which satisfy the hypothesis, but they  are not eigenfunctions of $\Delta$. As above we shall only consider the case $\alpha=\beta+i\gamma_p\rho$. Indeed (using the notation $\Lambda$ given above) there exists $\theta,\psi$ and $\lambda_\theta, \lambda_\psi\in S_q^\circ$ such that $\Lambda(\lambda_\psi)e^{-i\psi}=\Lambda(\lambda_\theta)e^{-i\theta}=\Lambda(\alpha)=-(\alpha^2+\rho^2)$ (see Figure 2). As above we define $f=\phi_{\lambda_\theta}+\phi_{\lambda_\psi}$.  Then
\[\Delta^k f=(\Lambda(\lambda_\theta))^k\phi_{\lambda_\theta}+(\Lambda(\lambda_\psi))^k\phi_{\lambda_\psi}=
\Lambda(\alpha)^k[e^{ik\theta}\phi_{\lambda_\theta}+e^{ik\psi}\phi_{\lambda_\psi}].\] From this it is clear that
$f$ satisfies the hypothesis of Theorem B and $f$ is not an eigenfunction of $\Delta$.
A similar construction will show that in Theorem A,
if we substitute $L^{2, \infty}$-norm  by   $L^{p', r}$-norm with $2<p'<\infty, 1\le r\le
\infty$ or by $L^\infty$-norm, then there exist functions which satisfy the hypothesis, despite  not being eigenfunctions of $\Delta$. This completes answering the questions  and thereby establishes the sharpness of Theorem
\ref{thm-X-2} and Theorem \ref{thm-X-p}.

Lastly we shall show that unlike Theorem \ref{thm-X-p}, it is necessary to use positive as well as negative integral powers of $\Delta$ in Theorem \ref{thm-X-2}.
Let $f=\phi_{\lambda_1}+\phi_{\lambda_2}$ for $\lambda_1, \lambda_2\in \R^\times, \lambda_1\neq \lambda_2$
and  $(\lambda_i^2+\rho^2)/(\alpha^2+\rho^2)<1$ for
$i=1,2$. Then
$\Delta^kf=-(\lambda_1^2+\rho^2)^k\phi_{\lambda_1}-(\lambda_2^2+\rho^2)^k\phi_{\lambda_2}$ and hence
$|\Delta^kf|\le (|\phi_{\lambda_1}|+|\phi_{\lambda_2}|) (\alpha^2+\rho^2)^k$. Therefore
$\|\Delta^k f\|_{2, \infty} \le M(\alpha^2+\rho^2)^k$ where
 $M=\|\phi_{\lambda_1}\|_{2, \infty}+\|\phi_{\lambda_2}\|_{2,
\infty}<\infty$. But  it is
clear that $f$ is not an eigenfunction of $\Delta$.

\section{Characterization of eigenfunction}
In this section we shall focus mainly on Iwasawa $NA$ groups,  in
other words on the Riemannian symmetric spaces $X$ of noncompact
type with real rank one. The purpose  is to prove certain
representation theorems for eigendistribution of the Laplace-
Beltrami operator $\Delta$ on $X=G/K.$ In particular we shall
generalize Theorem \ref{iobs} (b)  for $p\in (1,2)$ which was
conjectured in \cite{Bou-Sami} (see Theorem \ref{samib} below). To
put things in proper perspective we shall first look at the
available results. However being  comprehensible on this vast topic is beyond the scope of this article.
Instead, we shall restrict our attention only on those results which are related to the main theme of this paper. Throughout this section for a complex number $\lambda$,
\[E_{\lambda}=\{u\in C^{\infty}(X)\mid \Delta u=-(\lambda^2+\rho^2)u\}.\]

\subsection{A brief Survey}
We have the following theorem on $X$ which can be viewed as an
analogue of a result (\cite[p. 50, Theorem 2.5]{S-W}) on the upper half
plane $\R_{n+1}^+=\{(x,y)\in\R^{n+1}\mid x\in\R^n,y>0\}$.

\begin{theorem}[Furstenberg \cite{F}, Knapp-Williamson \cite{KW}] A harmonic function
$u: X\rightarrow \C$ satisfies
$$\sup_{t>0} \left(\int_{K/M} |u(ka_t)|^pdk\right)^{1/p}<\infty, 1<p<\infty, \ \ \ (\text{respectively, } \|u\|_\infty<\infty)$$ if
and only if there exists $f\in L^p(K/M)$ $($respectively $f\in
L^\infty(K/M)$$)$ such that  $u=\mathcal P_{-i\rho} f$. \label{Fur-KW}
\end{theorem}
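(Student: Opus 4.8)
The plan is to prove the two implications separately. Observe first that, since $-((-i\rho)^2+\rho^2)=0$, ``harmonic'' means exactly $u\in E_{-i\rho}$, and that because $M$ centralises $A$ the function $k\mapsto u(ka_t)$ on $K$ is right $M$-invariant, hence descends to $K/M$; write $f_t:=u(\cdot\,a_t)\in L^p(K/M)$. Throughout, let $P(x,kM)=e^{-2\rho H(x^{-1}k)}$ denote the Poisson kernel, so that $\mathcal P_{-i\rho}F(x)=\int_{K/M}F(k)P(x,kM)\,dk$.

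\emph{Sufficiency.} Suppose $u=\mathcal P_{-i\rho}f$ with $f\in L^p(K/M)$. Using $H((kx)^{-1}k')=H(x^{-1}k^{-1}k')$ and the bi-invariance of Haar measure on $K$, the kernel $(k,k')\mapsto P(ka_t,k'M)$ on $K/M\times K/M$ is doubly stochastic: it has integral $1$ in each variable, by the normalisation $\int_{K/M}P(x,k'M)\,dk'=1$ and the equivariance $P(kx,k'M)=P(x,(k^{-1}k')M)$. Hence by Jensen's inequality (equivalently Schur's test) the map $f\mapsto u(\cdot\,a_t)$ is a contraction on $L^p(K/M)$ for every $t>0$ and every $1\le p\le\infty$; in particular $\sup_{t>0}\|f_t\|_{L^p(K/M)}\le\|f\|_{L^p(K/M)}<\infty$, while for $p=\infty$ one has simply $\|u\|_\infty\le\|f\|_\infty$.

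\emph{Necessity.} Assume $u$ is harmonic with $C:=\sup_{t>0}\|f_t\|_{L^p(K/M)}<\infty$. For $1<p<\infty$ the space $L^p(K/M)$ is reflexive, so the bounded net $\{f_t\}$ has a subnet with $f_{t_n}\rightharpoonup f$ weakly in $L^p(K/M)$ for some $f$ with $\|f\|_{L^p}\le C$ and some $t_n\to\infty$; when $p=\infty$ one uses instead weak-$*$ compactness of the unit ball of $L^\infty(K/M)=L^1(K/M)^*$. To identify $u$ with $\mathcal P_{-i\rho}f$, fix $s>0$; for $t>s$, elliptic regularity makes $u$ continuous on the compact ball $\overline{B(0,t)}$, so $u|_{B(0,t)}$ is the unique harmonic function with boundary values $f_t$ on $\partial B(0,t)=Ka_t$, and by the $K$-invariance of this Dirichlet problem there is a probability density $\Pi_{s,t}$ on $K/M$ (the harmonic measure from $a_s$ relative to $B(0,t)$, transported to $K/M$) with $f_s=\Pi_{s,t}*f_t$, the convolution being on $K/M$. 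As $t\to\infty$ the harmonic measures $\Pi_{s,t}$ converge in $L^1(K/M)$ to $P(a_s,\cdot)$. Writing $f_s=\Pi_{s,t_n}*f_{t_n}$ and splitting it as $(\Pi_{s,t_n}-P(a_s,\cdot))*f_{t_n}+P(a_s,\cdot)*(f_{t_n}-f)+P(a_s,\cdot)*f$, the first term has $L^p$-norm $\le C\,\|\Pi_{s,t_n}-P(a_s,\cdot)\|_{L^1}\to0$, the second tends to $0$ weakly (convolution against the bounded kernel $P(a_s,\cdot)$ is weakly continuous), and the left side is independent of $n$; hence $f_s=P(a_s,\cdot)*f=\mathcal P_{-i\rho}f(\cdot\,a_s)$ for every $s>0$. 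Since $X=\{o\}\cup\bigcup_{s>0}Ka_s$ and both $u$ and $\mathcal P_{-i\rho}f$ are continuous, they coincide, giving $u=\mathcal P_{-i\rho}f$.

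\emph{The main obstacle} is the identification step, specifically the convergence $\Pi_{s,t}\to P(a_s,\cdot)$ of the harmonic measures of exhausting geodesic balls to the Poisson kernel; this is precisely the statement that the geometric boundary $K/M$ realises the Martin boundary of $X$ with its standard kernel, a known fact for rank one symmetric spaces but the only substantial input. One may instead short-circuit it by quoting the integral representation $u=\mathcal P_{-i\rho}T$ of harmonic functions by an analytic functional $T$ on $K/M$, noting that $f_t=\mathcal P_{-i\rho}T|_{Ka_t}\to T$ in the topology of analytic functionals because $P(a_t,\cdot)$ is an approximate identity as $t\to\infty$, which together with $f_{t_n}\rightharpoonup f$ forces $T=f\in L^p(K/M)$. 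The remaining points — right $M$-invariance and measurability of the $f_t$, and the routine replacement of weak limits by weak-$*$ limits when $p=\infty$ — are straightforward.
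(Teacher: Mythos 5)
The paper states Theorem \ref{Fur-KW} with attribution to Furstenberg and Knapp--Williamson and supplies no proof of its own, so there is no internal argument to compare against; I assess your proposal on its merits. The sufficiency direction is correct and clean: right $M$-invariance of $k\mapsto u(ka_t)$, the identity $\int_{K/M}P(ka_t,k'M)\,dk=1$ for fixed $k'$ (via $P(ka_t,k'M)=P(a_t,(k^{-1}k')M)$ and invariance of Haar measure) together with $\int_{K/M}P(x,k'M)\,dk'=\phi_{-i\rho}(x)\equiv 1$, and then Jensen/Schur. For necessity, the weak (resp.\ weak-$*$) compactness step is standard; the identification of the weak limit is the crux, and here your two routes are of rather different status.

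Your primary route (a) rests on the claim that the harmonic measures $\Pi_{s,t}$ of the exhausting balls converge in $L^1(K/M)$ to $P(a_s,\cdot)$. This is true and amounts to the identification of $K/M$ with the Martin boundary of $X$ at eigenvalue $0$ with the Poisson kernel as Martin kernel, but it is a result of depth comparable to the theorem being proved; you flag it honestly, and given it the rest of (a) is sound, including the weak-$*$--to--weak-$*$ continuity needed when $p=\infty$. Your alternative route (b) is the one that aligns with the machinery this paper builds and uses for the closely analogous Theorems \ref{Mpconjecture}, \ref{lohoue-knapp-p} and \ref{samib}: one first shows, by the mean-value argument of Lemma \ref{exponential}, that $\sup_{t>0}\|u(\cdot\,a_t)\|_{L^p(K/M)}<\infty$ forces at most exponential growth of $u$; Lewis's Theorem \ref{lewis} then gives $u=\mathcal P_{-i\rho}T$ for a genuine \emph{distribution} $T$ on $K/M$ (not merely a hyperfunction); and Lemma \ref{lemma-Ben}, with $\phi_{-i\rho}\equiv 1$, gives $u(\cdot\,a_t)\to T$ in $\mathcal D'(K/M)$, which combined with $f_{t_n}\rightharpoonup f$ in $L^p$ forces $T=f$. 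You gesture at this ``short-circuit'' but phrase it as passing directly from the Helgason analytic-functional representation; the step from analytic functional to distribution (via exponential growth and Lewis) is the one nontrivial link you omit, and it should be made explicit for (b) to be complete.
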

See also \cite{Stoll}. An analogue of the case $p=\infty$ of the
result above  for general $NA$ groups  was proved by Damek in
\cite{Dam1}. It is known that the theorem above is not true for bounded eigenfunctions of $\Delta$ with nonzero eigenvalues (see \cite{Gre}).

Going towards the representation of eigenfunctions of $\Delta$ on
$X$ with nonzero eigenvalue, we recall that  an eigendistribution $u$ of $\Delta$ on $X$ is
a real analytic function and is Poisson transform of an {\em
analytic functional} $T$ defined on $K/M$ (see \cite{H1}, \cite{H2},
\cite{KKM}). Lewis \cite{L}  puts additional condition on $u$ to
ensure that the analytic functional $T$ is a distribution on $K/M$.
\begin{theorem}[Lewis] If $u$ is an eigenfunction of $\Delta$ then $u$ is Poisson
transform of a distribution $T$ defined on $K/M$ if and only if
there exists $\beta>0$ such that $|u(ka_t)|\leq C e^{\beta t}$ for
all $k\in K/M$ and $t>0$.\label{lewis}
\end{theorem}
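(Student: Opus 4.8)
The plan is to deduce the theorem from the Helgason--Kashiwara--Kowata--Minemura description already quoted: any $u\in E_\lambda$ equals $\mathcal P_\lambda T$ for an analytic functional $T$ on $K/M$, so what has to be proved is that $T$ is a distribution exactly when $u$ grows at most exponentially along $A$. I would work entirely with the Fourier--Stieltjes coefficients of $T$. Write $\widehat K_M$ for the set of $\delta\in\widehat K$ with a nonzero $M$-fixed subspace, decompose $u=\sum_{\delta\in\widehat K_M}u_\delta$ into $K$-isotypic components, and recall that each $u_\delta$ is determined by a finite matrix $\widehat T(\delta)$ of Fourier coefficients of $T$ through a formula of the shape: $u_\delta(ka_t)$ is a matrix coefficient of $\delta(k)$ applied to $\widehat T(\delta)$ and multiplied by the generalized ($\tau_\delta$-)spherical function $\Phi_{\lambda,\delta}(a_t)$, which is a Jacobi-type function with an explicit dependence on $\delta$. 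With this dictionary, ``$T$ is a distribution'' is equivalent to ``$\|\widehat T(\delta)\|=O((1+|\delta|)^{N})$ for some $N$'', where $|\delta|$ denotes the length of the highest weight of $\delta$; so the theorem amounts to the equivalence between exponential growth of $u$ and polynomial growth of the $\widehat T(\delta)$.

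For the easy direction, suppose $T\in\mathcal D'(K/M)$ has order $N$, i.e. $|T(\psi)|\le C\|\psi\|_{C^{N}(K/M)}$. Since $\mathcal P_\lambda$ intertwines the $K$-actions and $K$-translation preserves the order and the defining constant of $T$, it suffices to estimate $u(a_t)=T_k\big(e^{-(i\lambda+\rho)H(a_t^{-1}k)}\big)$. For $k\in K$ one has $\|H(a_t^{-1}k)\|\le d(a_tK,eK)=t$, hence $|e^{-(i\lambda+\rho)H(a_t^{-1}k)}|\le e^{(|\Im\lambda|+\rho)t}$, and the $K$-derivatives in $k$ of $k\mapsto H(a_t^{-1}k)$ are bounded by polynomials in $t$ of degree the order of the derivative; applying $T$ together with the chain rule gives $|u(a_t)|\le C(1+t)^{N}e^{(|\Im\lambda|+\rho)t}\le C'e^{\beta t}$ for any $\beta>|\Im\lambda|+\rho$.

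For the substantive direction, assume $|u(ka_t)|\le Ce^{\beta t}$ for all $k\in K$ and $t>0$. Projecting $u$ onto its $\delta$-component, and then recovering $\widehat T(\delta)$ from the $K$-dependence of $u_\delta(\cdot\,a_t)$, costs only polynomial factors in $1+|\delta|$; hence, for every $t>0$,
\[\|\widehat T(\delta)\|\ \le\ \frac{C\,(1+|\delta|)^{A}\,e^{\beta t}}{|\Phi_{\lambda,\delta}(a_t)|},\]
with $A$ an absolute constant. I would now optimize over $t$: for large $|\delta|$ choose $t=t_\delta\asymp\log(1+|\delta|)$, so $e^{\beta t_\delta}\asymp(1+|\delta|)^{\beta}$, and combine this with a \emph{uniform lower bound}
\[|\Phi_{\lambda,\delta}(a_{t_\delta})|\ \ge\ c\,(1+|\delta|)^{-B},\qquad c,\,B\ \text{independent of }\delta.\]
Granting the lower bound, $\|\widehat T(\delta)\|\le C(1+|\delta|)^{\beta+A+B}$ grows polynomially, so $T\in\mathcal D'(K/M)$ and the proof is finished.

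The hard part is exactly the uniform lower bound for $\Phi_{\lambda,\delta}$ at $t_\delta\asymp\log(1+|\delta|)$: one must control the generalized spherical functions, equivalently Jacobi functions of growing order, simultaneously in the spectral parameter $\lambda$ and in the $K$-type $\delta$. Heuristically, in the window $t\asymp\log|\delta|$ the Harish--Chandra-type series for $\Phi_{\lambda,\delta}$ is dominated by its leading term, of size comparable to $(\tanh(t/2))^{|\delta|}e^{-\rho t}$ up to a factor polynomial in $|\delta|$, and at $t_\delta\asymp\log(1+|\delta|)$ this is bounded below by $(1+|\delta|)^{-B}$; making this rigorous requires either the explicit power-series or integral representations of the $\tau_\delta$-spherical functions with careful bookkeeping of the generalized $c$-functions $c_\delta(\lambda)$ and the location of their zeros --- which is precisely why the plain asymptotic regime $t\to\infty$ is not enough --- or uniform Jacobi-function estimates. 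The exceptional values of $\lambda$, where $\mathcal P_\lambda$ need not be onto $E_\lambda$ or where some $c_\delta(\lambda)$ vanishes, would be handled separately, by the same bookkeeping or by analytic continuation in $\lambda$.
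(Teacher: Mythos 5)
The paper offers no proof of this statement; it is quoted from Lewis \cite{L} as one item in the survey subsection 4.1, so there is no ``paper's own proof'' to compare against. Your strategy --- passing to $K$-isotypic components, expressing $u_\delta$ through $\widehat T(\delta)$ and the generalized $\tau_\delta$-spherical functions $\Phi_{\lambda,\delta}$, and translating ``$T$ is a distribution'' into ``polynomial growth of $\|\widehat T(\delta)\|$'' --- is the right framework, and your handling of the easy direction is reasonable. But there is a genuine, and self-acknowledged, gap in the substantive direction: the uniform lower bound $|\Phi_{\lambda,\delta}(a_{t_\delta})|\ge c(1+|\delta|)^{-B}$ at the scale $t_\delta\asymp\log(1+|\delta|)$, with $c,B$ independent of $\delta$, is the entire technical content of the theorem, and you state it only heuristically. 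This is not bookkeeping: it requires estimates of these Jacobi-type functions that are \emph{uniform in the order} (governed by $\delta$) precisely in the intermediate window $t\asymp\log|\delta|$, where neither the small-$t$ expansion (which controls the vanishing of $\Phi_{\lambda,\delta}$ to order $\asymp|\delta|$ at the origin) nor the large-$t$ Harish--Chandra asymptotics (where the generalized $c$-functions $c_\delta(\lambda)$ enter and may degenerate) is individually adequate. As it stands you have reduced the theorem to an unproved lemma that is itself comparable in difficulty to the theorem.

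A secondary issue is the treatment of the exceptional spectral parameters. You write that the $\lambda$ for which $\mathcal P_\lambda$ is not onto $E_\lambda$, or for which some $c_\delta(\lambda)$ vanishes, ``would be handled separately \ldots or by analytic continuation in $\lambda$.'' But the hypothesis fixes the eigenvalue $-(\lambda^2+\rho^2)$ and hence $\lambda$ up to sign, so there is no free parameter to continue in; and at those $\lambda$ the very premise (that $u$ is represented as $\mathcal P_\lambda T$ for some analytic functional $T$) can fail. Lewis's theorem carries restrictions precisely at these points, and the downstream uses in the paper (Theorems 4.7, 4.8) tacitly avoid them. You should either state the restriction explicitly or give a genuine argument for the excluded set; the current phrasing is a placeholder, not a proof.
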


We also have (\cite[Theorem 3.2 (v)]{BOS}),
\begin{lemma} Let $u\in E_\lambda(X)$ with $\Im \lambda<0$ or $\lambda=0$.
If $u$ is a Poisson transform of a distribution $T$ defined on $K/M$, then $u(ka_t)/\phi_\lambda(a_t)$ converges to $T$ in the sense of distribution as $t\to \infty$.
\label{lemma-Ben}
\end{lemma}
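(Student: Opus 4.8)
The plan is to test the smooth function $k\mapsto u(ka_{t})/\phi_{\lambda}(a_{t})$ against an arbitrary $\psi\in C^{\infty}(K/M)$ and show the pairing tends to $\langle T,\psi\rangle$. Writing $u=\mathcal P_{\lambda}T$ and using the continuity of the distribution $T$ together with the joint smoothness of $(k,k')\mapsto e^{-(i\lambda+\rho)H((ka_{t})^{-1}k')}=e^{-(i\lambda+\rho)H(a_{-t}k^{-1}k')}$, one may interchange the $dk$-integration with the action of $T$ to obtain
\[
\int_{K/M}\frac{u(ka_{t})}{\phi_{\lambda}(a_{t})}\,\psi(k)\,dk=\langle\, T,\ F_{t}\,\rangle ,\qquad
F_{t}(b)=\frac{1}{\phi_{\lambda}(a_{t})}\int_{K/M}e^{-(i\lambda+\rho)H(a_{-t}k^{-1}b)}\psi(k)\,dk .
\]
After the substitution $k\mapsto bk^{-1}$ (and absorbing the averaging over $M$, which acts trivially on functions on $K/M$), $F_{t}$ becomes the convolution of $\psi$ with the family of complex measures $d\mu_{t}(\ell)=\phi_{\lambda}(a_{t})^{-1}e^{-(i\lambda+\rho)H(a_{-t}\ell)}\,d\ell$ on $K/M$; since $\int_{K/M}e^{-(i\lambda+\rho)H(a_{-t}\ell)}\,d\ell=\phi_{\lambda}(a_{-t})=\phi_{\lambda}(a_{t})$, each $\mu_{t}$ has total mass one. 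Thus the lemma reduces to showing that $\mu_{t}\to\delta_{eM}$ as $t\to\infty$, strongly enough that $F_{t}=\mu_{t}*\psi\to\psi$ in the topology of $C^{\infty}(K/M)$; applying the continuity of $T$ on $C^{\infty}(K/M)$ then finishes the proof.

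The core of the argument is the concentration of $\mu_{t}$ at the origin $eM$, and this is exactly where the hypothesis $\Im\lambda<0$ or $\lambda=0$ is used. Two ingredients, both part of the standard rank-one Poisson-transform machinery, are needed. First, the asymptotics of $\phi_{\lambda}(a_{t})$ from the Harish-Chandra expansion, i.e.\ the analogue of \eqref{phi-delta-asym} for $X$: when $\Im\lambda<0$ one has $\phi_{\lambda}(a_{t})\sim\hc(\lambda)e^{(i\lambda-\rho)t}$ with $\hc(\lambda)\neq0$ (the $c$-function has no zeros in the lower half-plane, so the normalisation is legitimate), while $\phi_{0}(a_{t})\asymp(1+t)e^{-\rho t}$ in the degenerate case. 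Second, the behaviour of the ``$H$-function'' $\ell\mapsto H(a_{-t}\ell)$, which is minimised at $\ell=eM$ and increases away from it so rapidly that the density $e^{-(\rho-\Im\lambda)H(a_{-t}\ell)}$ escapes into every neighbourhood of $eM$. Comparing the two, $\int_{K/M\setminus V}|d\mu_{t}|\to0$ for every neighbourhood $V$ of $eM$ while $\mu_{t}(K/M)=1$, which gives weak-$*$ convergence to $\delta_{eM}$; differentiating the $H$-function estimate in the boundary variable upgrades this to convergence of $\mu_{t}*\psi$ to $\psi$ in all $C^{k}$-norms simultaneously. The case $\lambda=0$ is handled separately, tracking the polynomial factor $1+t$ consistently in both the normalisation $\phi_{0}(a_{t})$ and the near-diagonal size of the kernel so that the limiting measure is still exactly $\delta_{eM}$.

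I expect the main obstacle to be precisely this uniform concentration: not the qualitative statement $\mu_{t}\to\delta_{eM}$, which follows from soft comparison arguments, but upgrading it to convergence $\mu_{t}*\psi\to\psi$ in the $C^{\infty}$ topology, since only for that topology is the pairing with the distribution $T$ continuous. Once the $H$-function asymptotics are set up with enough uniformity — including their derivatives in the boundary variable — the interchange of $T$ with the integral and the final passage to the limit are routine.
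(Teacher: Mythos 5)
The statement you are proving is not proved in the paper at all: the authors quote it verbatim from Ben Sa\"{\i}d--Oshima--Shimeno \cite[Theorem 3.2(v)]{BOS} and supply no argument, so there is no internal proof to compare against. Judged on its own terms, your Fatou-type argument is sound and is almost certainly the same in spirit as the one in \cite{BOS} (whose title is precisely about Fatou theorems for eigenfunctions). The reduction $\int u(ka_t)\psi(k)\,dk/\phi_\lambda(a_t)=\langle T,\mu_t*\psi\rangle$ with $d\mu_t(\ell)=\phi_\lambda(a_t)^{-1}e^{-(i\lambda+\rho)H(a_{-t}\ell)}\,d\ell$ is correct, and the two ingredients you isolate (Harish-Chandra asymptotics with $\hc(\lambda)\neq 0$ in $\Im\lambda<0$, concentration of $H(a_{-t}\cdot)$ at $eM$) are the right ones.

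Two points worth making explicit rather than leaving implicit. First, since $\mu_t$ is a \emph{complex} measure, the uniform bound on the \emph{total variation} $|\mu_t|(K/M)$, not merely the identity $\mu_t(K/M)=1$, is what makes the splitting $\int_V+\int_{K/M\setminus V}$ give uniform convergence of $\mu_t*\psi$ to $\psi$; this bound does hold, because $|\mu_t|(K/M)=\phi_{i\Im\lambda}(a_t)/|\phi_\lambda(a_t)|\to \hc(i\Im\lambda)/|\hc(\lambda)|$ for $\Im\lambda<0$ (and equals $1$ identically when $\lambda=0$). Second, for the passage from uniform convergence to $C^\infty(K/M)$ convergence you do not really need to "differentiate the $H$-function estimate": since the kernel $\ell\mapsto e^{-(i\lambda+\rho)H(a_{-t}\ell)}$ is $M$-bi-invariant, $\mu_t*\psi$ descends to $K/M$ and the relevant invariant differential operators on $K$ pass through $\int\psi(b\ell^{-1})\,d\mu_t(\ell)$ onto $\psi$, so all $C^k$-norms are controlled once the $C^0$ case is done. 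With these details filled in your proof is complete, and it provides a self-contained substitute for the citation the paper relies on.
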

Sj\"{o}gren (\cite[Theorem 6.1]{Sj}) determines the size estimates
on $u$ which are sufficient to imply that the distribution $T$ is
actually given by a function.
\begin{theorem}[Sj\"{o}gren, \cite{Sj}] Let $u\in E_{\lambda}$ for some $\lambda\in \C$ with $\Im\lambda<0$ or $\lambda=0.$
For $1<p\leq\infty,$ and  $\beta>0$, the function $ka_t\mapsto
\phi_{\Im\lambda}(a_t)^{-1}e^{-\beta t/p}u(ka_t)$ belongs to
$L^{p,\infty}(X, m_{\beta})$ if and only if $u=\mathcal P_{\lambda}f$ for
some $f\in L^p(K/M).$ Here $dm_{\beta}(x)=dm_{\beta}(ka_t)=e^{(\beta - 2\rho)
t}J(t)dk dt,$ $t>0,$ $k\in K/M$.  \label{sj}
\end{theorem}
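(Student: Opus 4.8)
The plan is to prove the two implications separately; throughout write $g(ka_t)=\phi_{\Im\lambda}(a_t)^{-1}e^{-\beta t/p}u(ka_t)$ and $F_t(k)=u(ka_t)/\phi_\lambda(a_t)$ for $t>0$, $k\in K/M$, and use three elementary facts: (1) the modulus of the complex Poisson kernel, $\big|e^{-(i\lambda+\rho)H(x^{-1}k)}\big|=e^{(\Im\lambda-\rho)H(x^{-1}k)}$, is a positive Poisson kernel whose integral over $K/M$ at $x=ka_t$ equals $\phi_{\Im\lambda}(a_t)$, which is what makes $\phi_{\Im\lambda}$ the correct normalizer; (2) by the Harish-Chandra expansion (\ref{phi-delta-asym}) and the estimates of Section 2, $\phi_{\Im\lambda}(a_t)\asymp|\phi_\lambda(a_t)|$ for $t\ge1$ (the standing hypothesis $\Im\lambda<0$ or $\lambda=0$ is exactly the range in which this holds, in which $\lambda$ is non-exceptional for the Poisson transform, and in which the Fatou theorem used below is valid); (3) $J(t)\asymp e^{2\rho t}$, so $m_\beta(B(ka_t,r))\asymp e^{(\beta-2\rho)t}$ for fixed $r$, the shell $\{k'a_{t'}:|t'-t|<r\}$ has $m_\beta$-measure $\asymp e^{\beta t}$, and $\int_0^{t_0}e^{(\beta-2\rho)t}J(t)\,dt\asymp e^{\beta t_0}$ for $t_0\ge1$. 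For the \emph{sufficiency} ($u=\mathcal{P}_\lambda f$ with $f\in L^p(K/M)$ implying the estimate): since the normalized kernel $e^{(\Im\lambda-\rho)H((ka_t)^{-1}\cdot)}/\phi_{\Im\lambda}(a_t)$ is a probability density on $K/M$ which concentrates at $k$ and is dominated, uniformly in $t$, by averages over the balls of $K/M$, one gets $|u(ka_t)|\le C\,\phi_{\Im\lambda}(a_t)\,\mathcal{M}f(k)$ ($\mathcal{M}$ the Hardy--Littlewood maximal operator on $K/M$; replace $\mathcal{M}f$ by $\|f\|_\infty$ if $p=\infty$), hence $|g(ka_t)|\le C\,e^{-\beta t/p}\mathcal{M}f(k)$; a layer-cake computation of $m_\beta(\{|g|>s\})$ with the $t$-variable integrated first, using fact (3), yields $m_\beta(\{|g|>s\})\le C\,s^{-p}\|\mathcal{M}f\|_{L^p(K/M)}^p$, and the Hardy--Littlewood maximal inequality ($1<p\le\infty$) gives $\|g\|_{L^{p,\infty}(X,m_\beta)}\le C\|f\|_{L^p(K/M)}$.

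For the \emph{necessity}, $u=\mathcal{P}_\lambda T$ for an analytic functional $T$ on $K/M$ by the theorem of Helgason et al.\ (\cite{H1}, \cite{H2}, \cite{KKM}), and the goal is to prove $T\in L^p(K/M)$. Fix $q\in[1,p)$. Interior elliptic estimates for $\Delta u=-(\lambda^2+\rho^2)u$ give, for a small fixed radius $r$, $|u(ka_t)|^q\le C\,|B(ka_t,r)|^{-1}\int_{B(ka_t,r)}|u|^q\,dx$; substituting $u(ka_t)=g(ka_t)\phi_{\Im\lambda}(a_t)e^{\beta t/p}$ and using facts (2)--(3) together with $dx\asymp e^{(2\rho-\beta)t}\,dm_\beta$ on $B(ka_t,r)$ converts this into $|g(ka_t)|^q\le C\,|B(ka_t,r)|^{-1}\int_{B(ka_t,r)}|g|^q\,dm_\beta$. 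From $g\in L^{p,\infty}(X,m_\beta)$ and the embedding $L^{p,\infty}\hookrightarrow L^q$ on finite-measure sets, applied on the shell of $m_\beta$-measure $\asymp e^{\beta t}$, two conclusions follow. First, $|u(ka_t)|\le C\,\phi_{\Im\lambda}(a_t)e^{2\rho t/p}\le Ce^{\gamma t}$ for some $\gamma$, so by Lewis's theorem (Theorem~\ref{lewis}) $T$ is a distribution on $K/M$, and by Lemma~\ref{lemma-Ben} $F_t\to T$ in $\mathcal{D}'(K/M)$ as $t\to\infty$. Second, integrating the bound for $|g(ka_t)|^q$ over $k\in K/M$ (Fubini over balls) gives $\sup_{t>0}\|F_t\|_{L^q(K/M)}<\infty$; extracting a weakly convergent subsequence $F_{t_j}$ in the reflexive space $L^q(K/M)$ and identifying its limit with the distributional one shows $T\in L^q(K/M)\subset L^1(K/M)$. (For $p=\infty$ the first conclusion is immediate and one takes weak-$*$ limits in $L^\infty=(L^1)^*$.)

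It remains to upgrade $T$ from $L^q$ to $L^p$. Since $T\in L^1(K/M)$, a Fatou theorem for $\mathcal{P}_\lambda$ of $L^1$-boundary data gives $F_t(k)\to T(k)$ for almost every $k$, so for a.e.\ $k$ there is $t_0(k)<\infty$ with $|g(ka_t)|\ge c\,e^{-\beta t/p}|T(k)|$ for all $t\ge t_0(k)$. Running fact (3) in the reverse direction, for such $k$ and all sufficiently small $s>0$ (depending on $k$) the part of $\{|g|>s\}$ lying above $k$ has $m_\beta$-measure at least $\tfrac12 c^p\,(|T(k)|/s)^p$; integrating over $k$ and applying monotone convergence as $s\to0$ yields $\liminf_{s\to0}s^p\,m_\beta(\{|g|>s\})\ge c'\int_{K/M}|T|^p\,dk$. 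As the left-hand side is at most $\|g\|_{L^{p,\infty}(X,m_\beta)}^p<\infty$, we conclude $\int_{K/M}|T|^p\,dk<\infty$, i.e.\ $T\in L^p(K/M)$ and $u=\mathcal{P}_\lambda f$ with $f=T\in L^p(K/M)$.

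I expect the crux to be the two places where the eigenfunction equation is used. A weak-$L^p$ bound on the solid space $X$ does \emph{not} control $\int_E|g|^p$ over finite-measure sets, so $\|F_t\|_{L^p(K/M)}$ cannot be bounded by soft arguments; it is interior regularity of $u$ that lets one trade ``solid'' weak-$L^p$ control for ``slice-wise'' $L^q$ control with $q<p$, which is exactly why weak-$L^p$, and not $L^p$ or $L^{p,q}$ with $q<\infty$, is the sharp hypothesis here (compare Proposition~\ref{negative-results}). The other delicate ingredient is the almost-everywhere boundary convergence $F_t(k)\to T(k)$: it is a genuine Fatou theorem for eigenfunctions, classical in spirit but not formal, and without it the final, reverse layer-cake estimate of the previous paragraph is unavailable.
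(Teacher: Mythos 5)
The paper does not prove Theorem~\ref{sj}; it is quoted from Sj\"{o}gren \cite{Sj} and used as a black box (e.g.\ in Corollary~\ref{lr}). So there is no in-paper proof to compare against directly, but the Sj\"{o}gren-type argument is reflected faithfully in the paper's own proofs of the companion results Theorems~\ref{Mpconjecture} and~\ref{lohoue-knapp-p}, and your proof follows a genuinely different route.

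The route the paper (following \cite{Sj}) takes for such characterizations is: (1) exponential growth (Lemma~\ref{exponential}) plus Lewis's theorem to get $u=\mathcal{P}_\lambda T$ with $T$ a distribution; (2) a decreasing-rearrangement estimate in the spirit of Lemma~\ref{mp}, which converts the weak-$L^p$ bound and the exponential growth into $\int_1^R\|F_t\|_{L^p(K/M)}^p\,dt\le CR$; (3) the calculus lemma~(\ref{calculus}), which hands you a sequence $t_j\to\infty$ along which $\|F_{t_j}\|_{L^p(K/M)}$ stays bounded; (4) Eberlein--\v{S}mulian and Lemma~\ref{lemma-Ben} identify the weak limit with $T\in L^p(K/M)$. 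This stays at the exponent $p$ throughout and needs only the distributional boundary convergence, never almost everywhere convergence.

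Your necessity argument replaces steps (2)--(3) by an interior elliptic (mean-value) estimate plus the embedding $L^{p,\infty}\hookrightarrow L^q$ on sets of finite measure. This is a legitimate and in some respects cleaner quantitative observation: by the Fubini-over-balls computation you get $\sup_{t>0}\|F_t\|_{L^q(K/M)}<\infty$ for \emph{every} $q<p$ (a genuine supremum, not merely a liminf along a sequence), which makes the failure at $q=p$ transparent and ties in nicely with Proposition~\ref{negative-results}. The price is that you land in $L^q$, $q<p$, and to climb back to the endpoint you invoke an almost-everywhere Fatou theorem ($F_t(k)\to T(k)$ a.e.) and a reverse layer-cake inequality. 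That Fatou theorem (Michelson \cite{M}, quoted in Section~4.3 as item~(I) in the form of the maximal inequality $\|\wtilde f\|_{L^r(K/M)}\le C\|f\|_{L^r(K/M)}$, $1<r<\infty$) is a substantially heavier tool than the calculus lemma, though it is available for the range $\Im\lambda<0$ or $\lambda=0$ considered here once one knows $T\in L^q$ for some $q>1$. In short: Sj\"{o}gren's route is sharper and more elementary, extracting $L^p$ control directly on a sequence; yours trades that for uniform-in-$t$ control below the endpoint and then closes the gap with a Fatou theorem, a reorganization that is correct but makes one ingredient (a.e.\ boundary convergence) load-bearing where the paper's method can do without it. One small slip: at the end you invoke monotone convergence to pass the liminf inside $\int_{K/M}|T|^p\,dk$; what you are really using is monotone convergence of the truncated integrals $\int_{\{k:\,s<s^*(k)\}}|T(k)|^p\,dk$ as $s\downarrow0$ (equivalently Fatou's lemma), which is fine but worth stating precisely.
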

Taking $\beta=2\rho$ and $p=q',$ $1\leq q<2$ in Theorem \ref{sj} we
get an $L^p$-version of the result of Furstenberg mentioned above.
\begin{corollary}
Let $1\leq q<2$ and $\lambda=\alpha-i\gamma_q\rho,$ $\alpha\in\R$
and $u\in E_{\lambda}.$ Then $u\in L^{q',\infty}(X)$ if and only if
$u=\mathcal P_{\lambda}f$ for some $f\in L^{q'}(K/M).$ In particular, if
$q=1$ then bounded eigenfunctions of $\Delta$ with eigenvalue
$\alpha(2i\rho-\alpha)$ are Poisson transform of bounded functions
on $K/M.$ \label{lr}
\end{corollary}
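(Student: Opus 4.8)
The plan is to read the corollary directly off Theorem~\ref{sj} via the specialization announced in the sentence preceding it, namely $p=q'$ and $\beta=2\rho$. Fix $1\le q<2$ and $\lambda=\alpha-i\gamma_q\rho$ with $\alpha\in\R$. Since $\gamma_q=2/q-1\in(0,1]$ we have $\Im\lambda=-\gamma_q\rho<0$, so for $u\in E_{\lambda}$ the hypotheses of Theorem~\ref{sj} are met; moreover $p=q'\in(2,\infty]\subset(1,\infty]$ is an admissible exponent there (the value $p=\infty$, which occurs when $q=1$, being allowed). With $\beta=2\rho$ the ambient measure becomes $dm_{2\rho}(ka_t)=e^{(2\rho-2\rho)t}J(t)\,dk\,dt=J(t)\,dk\,dt$, which by the polar-coordinate form of the invariant measure recalled in subsection~2.3 is $c\,dx$ for a constant $c>0$; hence $L^{q',\infty}(X,m_{2\rho})=L^{q',\infty}(X)$ with comparable quasi-norms.

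It remains to observe that, for these parameters, the normalizing weight occurring in Theorem~\ref{sj}, $w(t)=\phi_{\Im\lambda}(a_t)^{-1}e^{-2\rho t/q'}$, satisfies $w(t)\asymp 1$ for all $t>0$. Here $\phi_{\Im\lambda}$ is the spherical function with purely imaginary parameter $i\,\Im\lambda$, i.e.\ $\phi_{-i\gamma_q\rho}=\phi_{i\gamma_q\rho}$ in the notation of subsection~2.2; by the estimate (\ref{exact-estimate-1}) (with the real parameter $\alpha$ there set to $0$ and $p=q$) one has $|\phi_{i\gamma_q\rho}(a_t)|\asymp e^{-(2\rho/q')t}$, while $e^{-2\rho t/q'}=e^{-(2\rho/q')t}$ identically; when $q=1$ one has $\phi_{\Im\lambda}\equiv 1$ and $q'=\infty$, so $w\equiv 1$ on the nose. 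Since $w$ is a function of $|x|=t$ alone and is bounded above and below by positive constants, the functions $x=ka_t\mapsto w(t)u(ka_t)$ and $u$ are pointwise comparable, hence have comparable decreasing rearrangements; therefore $x\mapsto w(t)u(ka_t)$ lies in $L^{q',\infty}(X,m_{2\rho})$ precisely when $u\in L^{q',\infty}(X)$. Feeding this equivalence back into Theorem~\ref{sj} yields the stated assertion ``$u\in L^{q',\infty}(X)\iff u=\mathcal P_{\lambda}f$ for some $f\in L^{q'}(K/M)$'' in both directions.

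The final sentence of the corollary is the case $q=1$: then $q'=\infty$, $L^{\infty,\infty}(X)=L^\infty(X)$, $\lambda=\alpha-i\rho$, and the eigenvalue is $-(\lambda^2+\rho^2)=-\bigl((\alpha-i\rho)^2+\rho^2\bigr)=\alpha(2i\rho-\alpha)$, so the equivalence specializes to: a bounded $u$ with $\Delta u=\alpha(2i\rho-\alpha)u$ equals $\mathcal P_{\alpha-i\rho}f$ for some $f\in L^\infty(K/M)$, and conversely.

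I do not anticipate any genuine difficulty; the whole argument consists of the two bookkeeping checks above, $m_{2\rho}=c\,dx$ and $w\asymp 1$, the second being exactly what (\ref{exact-estimate-1}) supplies. It is worth recording where $q<2$ is essential: at $q=2$ one has $\Im\lambda=0$, the normalizer becomes $\phi_0(a_t)\asymp(1+t)e^{-\rho t}$ rather than a pure exponential, so $w(t)\asymp(1+t)^{-1}\not\asymp 1$ and the equivalence genuinely breaks down — consistent with the discussion in subsection~3.1 (the boundary estimate $\|\mathcal P_\lambda F\|_{2,\infty}\le C(\lambda)\|F\|_{L^2(K/M)}$ is not available) and with Proposition~\ref{negative-results}.
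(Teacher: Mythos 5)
Your proof is correct and takes essentially the same route the paper intends; the paper's own proof is the single sentence preceding the corollary (``Taking $\beta=2\rho$ and $p=q'$ in Theorem~\ref{sj}\dots''), and you have simply supplied the two bookkeeping verifications it tacitly assumes, namely $dm_{2\rho}=J(t)\,dk\,dt=dx$ and $\phi_{\Im\lambda}(a_t)^{-1}e^{-2\rho t/q'}=\phi_{-i\gamma_q\rho}(a_t)^{-1}e^{-2\rho t/q'}\asymp 1$ via (\ref{exact-estimate-1}), together with the elementary eigenvalue computation in the $q=1$ case. Your reading of $\phi_{\Im\lambda}$ as $\phi_{i\,\Im\lambda}$ is the right one (it is the only reading under which $\phi_{\Im\lambda}\equiv 1$ when $\lambda=-i\rho$, recovering Theorem~\ref{Fur-KW}, and it matches the usage in the proof of Theorem~\ref{lohoue-knapp-p}).
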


The case $1<q<2, \alpha=0$ of the result above was proved
independently in \cite{Lo-Ry}.
The following result in \cite{BOS}  generalizes  Theorem
\ref{Fur-KW} for eigenfunctions of $\Delta$ with nonzero
eigenvalues.
\begin{theorem}[Ben Sa\"{i}d et. al.]
Let $\Im\lambda<0$ or $\lambda=0,$ $1< p\leq\infty$ and $u\in
E_{\lambda}.$ Then $u=\mathcal P_{\lambda}f$ for some $f\in L^p(K/M)$ if and
only if
$\sup_{t>0}\phi_{\Im\lambda}(a_t)^{-1}\left(\int_{K/M}|u(ka_t)|^pdk\right)^{1/p}<\infty$ \ $($with usual modification for $L^\infty$-norm$)$.
If $p=1$ then $u=\mathcal P_{\lambda}\mu$ for some signed measure $\mu$ on
$K/M$.
\label{sos}
\end{theorem}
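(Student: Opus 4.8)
The plan is to establish both implications of Theorem~\ref{sos} and to read off the $p=1$ statement from the proof of the converse. Throughout write $N_p(u)=\sup_{t>0}\phi_{\Im\lambda}(a_t)^{-1}\|u(\cdot\,a_t)\|_{L^p(K/M)}$, with the obvious modification for $p=\infty$, and recall the elementary identity $\int_{K/M}\bigl|e^{-(i\lambda+\rho)H(a_t^{-1}k)}\bigr|\,dk=\int_{K/M}e^{(\Im\lambda-\rho)H(a_t^{-1}k)}\,dk=\phi_{\Im\lambda}(a_t)$, which is just the defining integral of $\phi_{\Im\lambda}$ in the normalization of Theorem~\ref{sj}.

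For the direction ``$u=\mathcal P_\lambda f\Rightarrow N_p(u)<\infty$'' I would pass to $K$: since $\kappa\mapsto u(\kappa a_t)$ is right $M$-invariant and $ma_t=a_tm$ for $m\in M$, the coset formula for $\mathcal P_\lambda$ rewrites, after the substitution $k\mapsto\kappa k$, as $u(\kappa a_t)=\int_K \widetilde f(\kappa k)\,e^{-(i\lambda+\rho)H(a_t^{-1}k)}\,dk$ with $\widetilde f$ the lift of $f$ to $K$. Minkowski's integral inequality together with right invariance of Haar measure on $K$ then gives $\|u(\cdot\,a_t)\|_{L^p(K/M)}\le\|f\|_{L^p(K/M)}\phi_{\Im\lambda}(a_t)$ for $1\le p\le\infty$, so $N_p(u)\le\|f\|_{L^p(K/M)}<\infty$.

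For the converse, assume $N_p(u)<\infty$. The first step is to turn this slice-wise $L^p$-bound into the pointwise bound required by Lewis (for $p=\infty$ this is immediate). By the interior $L^p$--$L^\infty$ estimate for solutions of $\Delta v=cv$ on a space of bounded geometry, with constant uniform in the base point because $\Delta$ is $G$-invariant, $|u(ka_t)|^p\le C\int_{B(ka_t,1)}|u|^p$; since $B(ka_t,1)$ lies in the annulus $\{k'a_s:|s-t|\le1\}$, where $J(s)\asymp e^{2\rho s}$ and $\phi_{\Im\lambda}(a_s)\asymp\phi_{\Im\lambda}(a_t)$, the polar formula (\ref{polar-2}) and $N_p(u)<\infty$ yield $\int_{B(ka_t,1)}|u|^p\le C\,e^{2\rho t}\phi_{\Im\lambda}(a_t)^p$, hence $|u(ka_t)|\le C\,e^{2\rho t/p}\phi_{\Im\lambda}(a_t)\le Ce^{\beta t}$ for some $\beta>0$ by the Section~2 asymptotics of $\phi_{\Im\lambda}$. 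By Theorem~\ref{lewis}, $u=\mathcal P_\lambda T$ for a distribution $T$ on $K/M$, and Lemma~\ref{lemma-Ben} then gives $u(\cdot\,a_t)/\phi_\lambda(a_t)\to T$ in $\mathcal D'(K/M)$ as $t\to\infty$. On the other hand, from the Harish--Chandra expansion (\ref{phi-delta-asym}) and its analytic continuation (together with $\phi_{-i\rho}\equiv1$ and the estimate of $\phi_0$) one has $\phi_{\Im\lambda}(a_t)\le C|\phi_\lambda(a_t)|$ for all large $t$ when $\Im\lambda\le0$; combined with $N_p(u)<\infty$ this gives $\|u(\cdot\,a_t)/\phi_\lambda(a_t)\|_{L^p(K/M)}\le C\,N_p(u)$ for $t$ large. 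Choosing $t_n\to\infty$ and extracting a weak limit $g$ (reflexivity of $L^p(K/M)$ for $1<p<\infty$; Banach--Alaoglu in $L^\infty=(L^1)^*$ for $p=\infty$; weak-$*$ compactness of bounded subsets of $L^1(K/M)$ inside the space of finite signed measures on $K/M$ for $p=1$), weak convergence forces distributional convergence, so uniqueness of the $\mathcal D'$-limit gives $T=g$. Thus $u=\mathcal P_\lambda g$ with $g\in L^p(K/M)$ for $1<p\le\infty$, and $u=\mathcal P_\lambda\mu$ for a finite signed measure $\mu$ when $p=1$.

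The main obstacle is the comparison $\phi_{\Im\lambda}(a_t)\lesssim|\phi_\lambda(a_t)|$ for $\Im\lambda\le0$ and large $t$: one must rule out that $\phi_\lambda(a_t)$ decays strictly faster than $\phi_{\Im\lambda}(a_t)$ or vanishes for large $t$. This is handled by isolating the leading term $\hc(\lambda)e^{(i\lambda-\rho)t}$ in (\ref{phi-delta-asym}) and using that $\hc(\lambda)\neq0$ for $\Im\lambda<0$, so that $|\phi_\lambda(a_t)|\asymp e^{(|\Im\lambda|-\rho)t}\asymp\phi_{\Im\lambda}(a_t)$; the borderline parameters $\lambda=0$ and $\lambda=-i\rho$ are treated directly from the Section~2 asymptotics ($\phi_0(a_t)\asymp(1+t)e^{-\rho t}$ and $\phi_{-i\rho}\equiv1$). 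A secondary technical point, the uniformity in the base point of the interior elliptic estimate, follows from the homogeneity of $X$ under $G$.
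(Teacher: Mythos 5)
The paper does not prove Theorem~\ref{sos} at all: it is quoted verbatim from the cited paper of Ben Sa\"{i}d--Oshima--Shimeno \cite{BOS} in the survey subsection~4.1, so there is no ``paper's own proof'' to compare against. What \emph{is} comparable is the paper's own proof technique for the closely related Theorems~\ref{Mpconjecture}, \ref{lohoue-knapp-p} and \ref{samib}, which the authors present as refinements of exactly this result, and your argument follows the same skeleton: obtain a crude exponential pointwise bound so that Theorem~\ref{lewis} (Lewis) produces a boundary distribution $T$; invoke Lemma~\ref{lemma-Ben} to identify $T$ as the distributional limit of $u(\cdot\,a_t)/\phi_\lambda(a_t)$; produce a norm-bounded family of slices $u(\cdot\,a_t)/\phi_\lambda(a_t)$ in $L^p(K/M)$; extract a weak (or weak-$*$) limit and conclude $T\in L^p$ (or $T$ a measure when $p=1$). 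Your proof is correct.

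Where you diverge, and where each route buys something: to obtain the exponential bound the paper uses the generalized mean-value property (\ref{gmvt}) and a ball-averaging argument (Lemma~\ref{exponential}), whereas you use an interior $L^p$--$L^\infty$ elliptic estimate uniform in the base point by homogeneity; both are valid, the paper's being more self-contained within spherical analysis, yours being arguably shorter. Second, the paper's hypotheses in Theorems~\ref{Mpconjecture}, \ref{lohoue-knapp-p} are \emph{integrated} size estimates ($M_{p'}$ or $\mathcal A_{p',q}$), and the authors must invoke the calculus fact~(\ref{calculus}) to extract a good sequence $t_j\to\infty$ on which the slice norms are bounded; because the BOS hypothesis is the stronger $\sup_{t}$-bound you can skip this step entirely, which is a genuine simplification. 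Third, for the ``easy'' direction you use Minkowski's integral inequality directly, which also yields the norm comparison $N_p(u)\asymp\|f\|_{L^p(K/M)}$ for free; the paper, when it needs this inequality (e.g.\ at the end of Theorems~\ref{Mpconjecture}, \ref{lohoue-knapp-p}), instead cites Michelson's maximal-function estimate~(\ref{mike}), which is more than what is needed here. Two points worth being explicit about: (i) the symbol $\phi_{\Im\lambda}$ in the quoted Sj\"{o}gren/BOS statements is \emph{not} the spherical function at the real parameter $\Im\lambda$ in the normalization of Section~2 (that would contradict Theorem~\ref{Fur-KW} at $\lambda=-i\rho$); it is $\phi_{i\Im\lambda}$, i.e.\ the positive function $\asymp e^{(\Im\lambda-\rho)t}$, and your ``elementary identity'' is correct precisely under this reading --- you flag this, but it deserves to be stated outright since the paper's own Section~2 uses the other convention; (ii) the nonvanishing $\hc(\lambda)\neq0$ for $\Im\lambda<0$, which you use to get $|\phi_\lambda(a_t)|\gtrsim\phi_{i\Im\lambda}(a_t)$, is indeed the essential input and is true (for rank-one $X$ the Gindikin--Karpelevich formula shows the zeros of $\hc$ lie in the closed upper half-plane), so no gap there.
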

We observe that all these  results leave out the case
$\lambda\in\R^\times$.
 Motivated by a work of Strichartz
(\cite{St1}) this  was taken up by
 Ionescu  and
Boussejra et al. (\cite{Ion-Pois-1, Bou-Sami}) and their works reveal  that
the oscillatory behaviour of $\phi_\lambda, \lambda\in
\R^\times$ plays a critical role in this case.
\begin{theorem}
 [Ionescu, Boussejra et al.] Suppose that $u\in E_{\lambda}$ with
 $\lambda\in\R^\times$.
 \begin{enumerate}
 \item[(a)]
Then $u=\mathcal P_{\lambda}f$ for some $f\in L^2(K/M)$ if and only if
$M_2(u)<\infty.$ Moreover, in this case
$$M_2(u)=|\hc(\lambda)|\|f\|_{L^2(K/M)} \text{ for all } f\in L^2(K/M).$$
\item[(b)] For  $p\geq 2$ and $X$ a hyperbolic space over $\R,\C$
or $\mathbb H$, $u=\mathcal P_{\lambda}f$ for some $f\in L^p(K/M)$ if and
only if $\sup_{t>0}e^{\rho t}\mathcal A_p(u)(a_t)<\infty.$ \end{enumerate} \label{iobs}
\end{theorem}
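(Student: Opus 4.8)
The plan is to handle both parts on a common footing. The \emph{necessity} direction (a Poisson transform meets the stated size estimate) will follow by inserting the Harish-Chandra asymptotics \eqref{phi-delta-asym} into $\mathcal P_\lambda f$ and using that the resulting cross terms oscillate in the radial variable; the \emph{sufficiency} direction will start from Helgason's theorem that every $u\in E_\lambda$ equals $\mathcal P_\lambda T$ for a unique analytic functional $T$ on $K/M$, and then use the a priori size bound to promote $T$ to a function in $L^2$ (resp.\ $L^p$). The feature that separates $\lambda\in\R^\times$ from the case $\Im\lambda<0$ of Lemma~\ref{lemma-Ben} is that $u(ka_t)/\phi_\lambda(a_t)$ no longer converges, since $\phi_\lambda$ oscillates; accordingly I will isolate the two leading boundary profiles, the coefficient of $e^{(i\lambda-\rho)t}$ and that of $e^{(-i\lambda-\rho)t}$, by a time-averaging device.

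For part (a) (Ionescu's statement, valid for every rank one $X$): for necessity take first $f$ to be $K$-finite and use the expansion $\mathcal P_\lambda f(ka_t)=e^{-\rho t}\bigl[e^{i\lambda t}(\mathbf c(\lambda)f)(k)+e^{-i\lambda t}(\mathbf c(-\lambda)\mathcal A_\lambda f)(k)+o(1)\bigr]$, uniformly in $k$, where $\mathbf c(\lambda)$ is the $c$-operator on $L^2(K/M)$ and $\mathcal A_\lambda$ the normalized principal-series intertwining operator (unitary for real $\lambda$). Integrating $|\mathcal P_\lambda f(ka_t)|^2$ over $K/M$, multiplying by $J(t)\asymp e^{2\rho t}$ (see \eqref{polar-2}) and averaging in $t\in(0,R)$, the diagonal terms survive while $\int_1^R e^{\pm2i\lambda t}\,dt=O(1)$; careful bookkeeping of the surviving terms, using the rank-one identity $|\mathbf c_\delta(\lambda)|=|\hc(\lambda)|$ for $\lambda\in\R^\times$ (each ratio $\mathbf c_\delta(\lambda)/\hc(\lambda)$ being a finite Blaschke-type product in $i\lambda$, hence unimodular on the real axis), yields $M_2(\mathcal P_\lambda f)=|\hc(\lambda)|\,\|f\|_{L^2(K/M)}$, and a density argument removes $K$-finiteness. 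For sufficiency decompose $u=\sum_\delta u_\delta$ into $K$-isotypic pieces, $u_\delta=\mathcal P_\lambda T_\delta$ with $T_\delta$ in the finite-dimensional (hence smooth) $\delta$-isotypic subspace of $L^2(K/M)$; since the $u_\delta$ are pairwise orthogonal over every geodesic sphere, $\tfrac1R\int_{B(0,R)}|u|^2=\sum_\delta\tfrac1R\int_{B(0,R)}|u_\delta|^2$, and the necessity estimate applied componentwise gives $|\hc(\lambda)|^2\sum_\delta\|T_\delta\|^2\le M_2(u)^2<\infty$, whence $T=\sum_\delta T_\delta\in L^2(K/M)$ and $u=\mathcal P_\lambda T$.

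For part (b) (Boussejra et al., $X$ hyperbolic over $\R$, $\C$ or $\Hq$): the Poisson kernel $e^{-(i\lambda+\rho)H(x^{-1}k)}$ is explicit, and I would verify by a direct kernel estimate that $f\mapsto e^{\rho t}\mathcal P_\lambda f(\,\cdot\,a_t)$ is bounded on $L^p(K/M)$ uniformly in $t>0$ for $p\ge2$ (the weight $e^{\rho t}$ cancels the decay exactly, and $p'\le2$ keeps the relevant $L^{p'}$-norm of the kernel finite), which is the necessity. For sufficiency put $F_t=e^{\rho t}u(\,\cdot\,a_t)$, bounded in $L^p(K/M)$ by hypothesis, write $u=\mathcal P_\lambda T$, and use the distributional asymptotics $F_t=e^{i\lambda t}\,\mathbf c(\lambda)T+e^{-i\lambda t}\,\mathbf c(-\lambda)\mathcal A_\lambda T+o(1)$; the average $g:=\lim_{R\to\infty}\tfrac1R\int_1^R e^{-i\lambda t}F_t\,dt$ then exists and equals $\mathbf c(\lambda)T$, since the $e^{-i\lambda t}$-average annihilates the second term and the error. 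By Minkowski's integral inequality together with lower semicontinuity of the $L^p$-norm, $\|\mathbf c(\lambda)T\|_{L^p(K/M)}\le\sup_{t>0}\|F_t\|_{L^p(K/M)}<\infty$, and since $\mathbf c(\lambda)$ is boundedly invertible on $L^p(K/M)$ in these cases, $T\in L^p(K/M)$ and $u=\mathcal P_\lambda T$.

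I expect the hard parts to be: in (a), establishing the uniform-in-$\delta$ identity $|\mathbf c_\delta(\lambda)|=|\hc(\lambda)|$ on the real axis together with the uniform control of the error term $E(\lambda,t)$ of \eqref{phi-delta-asym} across $K$-types (a genuinely rank-one phenomenon, needed so that the per-$K$-type leading coefficients telescope and the time-average produces a bona fide $L^2$ boundary function rather than only a distribution); and in (b), the explicit kernel estimate giving uniform $L^p$-boundedness of the normalized Poisson operator and the $L^p$-invertibility of $\mathbf c(\lambda)$ on $K/M$, which is exactly where the restriction to the hyperbolic spaces over $\R,\C,\Hq$ is used.
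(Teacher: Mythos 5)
This statement is quoted as a survey result in the paper's subsection~4.1 and is not proved there: part (a) is Ionescu's theorem from \cite{Ion-Pois-1} and part (b) is from \cite{Bou-Sami}. The paper uses (a) as a black box and instead supplies a new proof of a \emph{generalization} of (b) (Theorem~\ref{samib}), using a duality argument built around \eqref{sami1}, \eqref{sami2} and approximate identities on $K$, not the time-averaging scheme you propose. So there is no ``paper's own proof'' to match against; I assess your proposal on its own.

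Your outline of part~(a) is consistent with Ionescu's actual argument: decompose into $K$-types, use the Harish-Chandra-type two-term asymptotics of $\mathcal P_\lambda$ restricted to each $\delta$, and exploit the rank-one fact that each generalized $c$-function $\mathbf c_\delta(\lambda)$ is a unimodular multiple of $\hc(\lambda)$ on $\R^\times$, so that the per-$\delta$ $M_2$-norms telescope. Identifying that uniform control over $K$-types of the error $E(\lambda,t)$ in \eqref{phi-delta-asym} is the crux is exactly right; this is indeed what makes the rank-one hypothesis essential.

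There is a genuine gap in the necessity half of your part~(b). You claim that ``a direct kernel estimate'' gives $\sup_{t>0}e^{\rho t}\|\mathcal P_\lambda f(\cdot a_t)\|_{L^p(K/M)}\le C\|f\|_{L^p(K/M)}$ because ``the weight $e^{\rho t}$ cancels the decay exactly and $p'\le 2$ keeps the relevant $L^{p'}$-norm of the kernel finite.'' This does not work. For an $L^p\to L^p$ bound via convolution on $K$ you need the $L^1(K/M)$-norm of the kernel (Young), not its $L^{p'}$-norm (Hölder gives only an $L^\infty$ bound), and the $L^1$-norm of the modulus of the Poisson kernel at $a_t$ is
\[
\int_{K/M} e^{-\rho H(a_t^{-1}k)}\,dk=\phi_0(a_t)\asymp(1+t)e^{-\rho t},
\]
which carries an extra logarithmic factor; so taking absolute values inside the kernel loses exactly the factor $(1+t)$ and the supremum over $t$ is infinite. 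This is, in fact, the whole point of the restriction to $\lambda\in\R^\times$: one must exploit the oscillatory phase $e^{-i\lambda H(a_t^{-1}k)}$ to recover the sharp $e^{-\rho t}$ decay, and Boussejra--Sami do so through explicit computations on the hyperbolic spaces over $\R,\C,\mathbb H$ (this is precisely where the restriction to these spaces enters). Your own concluding paragraph hints at the issue, but the body of your argument presents it as routine, and as stated the step fails. Similarly, the $L^p$-boundedness \emph{and} bounded invertibility of $\mathbf c(\lambda)$ as a (Knapp--Stein-type) singular integral operator on $K/M$ is a nontrivial ingredient for $p\ne 2$ and should not be asserted without reference.

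As a point of comparison: the paper's route to (b)-type conclusions (Theorem~\ref{samib}) avoids your difficulties by feeding the problem back into the $L^2$ case via an approximate identity $\varphi_n$ on $K$, the limit formula \eqref{sami2}, and a duality pairing against $\mathcal P_\lambda\psi$ estimated through \eqref{sami1}. That route still needs the nontrivial Poisson-transform bound \eqref{sami1} (also from \cite{Bou-Sami}), so it does not circumvent the oscillatory kernel estimate either, but it cleanly reduces the boundary-function extraction to the already-established $L^2$ statement instead of arguing with $\mathbf c(\lambda)$ directly on $L^p$.
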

It was conjectured in \cite{Bou-Sami} that (b)  holds also for all $p\in (1, 2)$, which we shall prove
in Theorem \ref{samib}.

\subsection{Preparatory Lemmas}
\begin{lemma}Let $u\in E_{\lambda},$ with $\lambda\in\C$ and
$1<p<\infty, 1\le q<\infty$. We denote by $\|\cdot\|$ any of these
three norms $\|\cdot\|_{p, \infty}$, $M_p$ or $\mathcal A_{p,q}$. If
$\|u\|<\infty$ then there exists $\alpha>0$, $C>0$, both of  which
may depend on $u$ such that
\begin{equation} |u(ka_s)|\leq C\|u\|e^{\alpha s} \text{ for all } s>0 \end{equation}
\label{exponential}
\end{lemma}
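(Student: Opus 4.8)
The plan is to bound $|u|$ at a point from the average of $|u|$ over a geodesic ball of fixed radius around that point, using the mean value property of eigenfunctions, and then to control that local average by the given global size estimate together with the exponential volume growth of balls in $X$. First I would fix $R_0\ge 1$ and a nonnegative $\psi\in C^\infty_c([0,\infty))$ supported in $[0,R_0]$, regarded also as the radial function $y\mapsto\psi(|y|)$ on $X$; since $\phi_\lambda(a_r)\to\phi_\lambda(e)=1$ as $r\to 0^+$, one may take $\psi$ concentrated near $0$ so that $c_\psi:=\int_X\psi\,\phi_\lambda=\int_0^\infty\psi(r)\phi_\lambda(r)J(r)\,dr\neq 0$. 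For $u\in E_\lambda$ the mean value identity $\int_K u(x_0ka_r)\,dk=\phi_\lambda(a_r)\,u(x_0)$ holds (see \cite{Helga-3}; this is precisely the identity already used in the proof of Proposition \ref{negative-results}). Multiplying by $\psi(r)J(r)$, integrating in $r$, and using the polar integration formula $\int_X f=\int_0^\infty\int_K f(ka_r)\,dk\,J(r)\,dr$ translated by $x_0$, I obtain
\[ c_\psi\,u(x_0)=\int_0^\infty\!\!\int_K u(x_0ka_r)\psi(r)J(r)\,dk\,dr=\int_{B(x_0,R_0)} u(y)\,\psi\big(d(x_0,y)\big)\,dy, \]
so that $|u(x_0)|\le C_{\psi,\lambda}\int_{B(x_0,R_0)}|u(y)|\,dy$ with $C_{\psi,\lambda}=\|\psi\|_\infty/|c_\psi|$.

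Next I would specialize $x_0=k_0a_s$ with $s>0$; since $|x_0|=|a_s|=s$, the triangle inequality gives $B(x_0,R_0)\subseteq B(0,s+R_0)$, so it remains to estimate $\int_{B(0,s+R_0)}|u|$. From the polar decomposition (see \eqref{polar-2}) one has $|B(0,R)|\le Ce^{2\rho R}$ for all $R>0$. If $\|u\|_{p,\infty}<\infty$, then the Hölder inequality for Lorentz spaces (pairing $L^{p,\infty}$ with $L^{p',1}$) together with $\|\mathbf 1_E\|_{p',1}\asymp|E|^{1/p'}$ give $\int_{B(0,s+R_0)}|u|\le C\|u\|_{p,\infty}|B(0,s+R_0)|^{1/p'}\le C'\|u\|_{p,\infty}e^{(2\rho/p')s}$. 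If $\mathcal A_{p,q}(u)<\infty$, then since $q\ge 1$ Jensen's inequality on the probability space $K/M$ gives $\int_{K/M}|u(ka_r)|\,dk\le\mathcal A_q(u)(a_r)$, and as $\mathcal A_q(u)$ is radial this yields $\int_{B(0,R)}|u|\le\int_{B(0,R)}\mathcal A_q(u)\le C\,\|\mathcal A_q(u)\|_{p,\infty}\,|B(0,R)|^{1/p'}=C\,\mathcal A_{p,q}(u)\,|B(0,R)|^{1/p'}$, with $R=s+R_0$. Finally, if $M_p(u)<\infty$, then finiteness of the $\limsup$ together with the continuity of $u$ gives $\int_{B(0,R)}|u|^p\le C_u(1+R)$ for all $R>0$, and Hölder's inequality then gives $\int_{B(0,s+R_0)}|u|\le|B(0,s+R_0)|^{1/p'}\big(C_u(1+s+R_0)\big)^{1/p}\le C'_u(1+s)^{1/p}e^{(2\rho/p')s}$, the polynomial factor being absorbed at the cost of replacing $2\rho/p'$ by any strictly larger $\alpha$. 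In every case this yields $|u(k_0a_s)|\le C\|u\|e^{\alpha s}$ for $s>0$, with $\alpha=2\rho/p'$ (any $\alpha>2\rho/p'$ for the $M_p$-norm) and $C$ depending at most on $u,\lambda,p,q$.

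The argument is essentially routine, so there is no single serious obstacle; the points needing some care are the correct meshing of the mean value identity with the polar-coordinate change of variables, so that the right-hand side is genuinely an integral over a ball of \emph{fixed} radius $R_0$ (independent of $x_0$); the choice of the bump $\psi$ with $\int_X\psi\phi_\lambda\neq 0$, which is immediate from $\phi_\lambda(e)=1$ and continuity and works whether or not $\lambda$ is real (in the complex case one uses that $\mathrm{Re}\,c_\psi$ is close to $\int_X\psi>0$ for $\psi$ supported near $0$); and, in the $M_p$ case, the absorption of the polynomial volume/averaging factor into the exponential, which forces the constant to depend on $u$.
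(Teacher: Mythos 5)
Your proof is correct and follows the same fundamental route as the paper's: the generalized mean value identity $\int_K u(x_0 kx)\,dk = \phi_\lambda(x)\,u(x_0)$ is used to bound $|u(x_0)|$ by a local integral of $|u|$, and then the global size estimate together with the exponential volume growth of balls yields the pointwise exponential bound. The local step, however, is executed differently. The paper integrates the pointwise inequality $|u(g)|\le C\int_K|u(gkx)|\,dk$ over the \emph{shrinking} ball $B(0,e^{-s})$ and divides by $|B(0,e^{-s})|\asymp e^{-ds}$, whereas you integrate the mean value identity against a fixed smooth bump $\psi$ concentrated near $e$ with $\int_X\psi\,\phi_\lambda\neq 0$, obtaining $|u(x_0)|\le C_{\psi,\lambda}\int_{B(x_0,R_0)}|u|$ with $R_0$ independent of $s$. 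Your variant is slightly cleaner: there is no $e^{ds}$ penalty from inverting the small-ball volume, so one gets the exponent $2\rho/p'$ in the $L^{p,\infty}$ case in place of the paper's $d/p'$ (and correspondingly smaller exponents in the $M_p$ and $\mathcal A_{p,q}$ cases); indeed, in the $L^{p,\infty}$ case, using the $G$-invariance $|B(x_0,R_0)|=|B(0,R_0)|$ directly would even give uniform boundedness, stronger than what the lemma asserts. The subsequent use of H\"older/Lorentz duality for $\|\cdot\|_{p,\infty}$, Jensen plus radiality for $\mathcal A_{p,q}$, and continuity plus the $\limsup$ definition for $M_p$ matches the paper's handling step for step.
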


\begin{proof}
Since $u$ is continuous it is enough to assume
that $s$ is large. We shall first take the case $\|u\|_{p,
\infty}<\infty$.  We consider the ball $B(0,e^{-s})=\{k_1a_r \mid k_1\in
K, 0<r<e^{-s}\},\  s>1$ and using polar coordinates we get
 \begin{equation}
|B(0,e^{-s})|\asymp\int_0^{e^{-s}}r^{d-1}dr=C_d e^{-ds},\label{ballmeasure}
\end{equation} where $d=\dim X$. By generalized mean value property of
eigenfunctions of the Laplacian (\cite{Helga-2}, p.402) we have \begin{equation}
u(g)\phi_{\lambda}(x)=\int_Ku(gkx)dk,\,  g\in G, x\in
X.\label{gmvt} \end{equation}
Since $\phi_{\lambda}(e)=1$ it follows that for large $s$, we have
$|\phi_{\lambda}(x)|\geq C$ for all $x\in B(0,e^{-s})$. Therefore
from (\ref{gmvt})  we conclude that \begin{equation} |u(g)|\leq
C\int_K|u(gkx)|dk, \text{ for all } x\in B(0,e^{-s}) g=k_1a_s.\label{meanineq}
\end{equation}
 Integrating both sides of
(\ref{meanineq}) over $B(0,e^{-s})$ and noting that $B(0,e^{-s})$ is
$K$-invariant, we get
\begin{eqnarray*}
|B(0,e^{-s})||u(g)|&\leq
&C\int_{B(0,e^{-s})}\int_K|u(gkx)|dkdx\\
&=&C\int_{B(0,e^{-s})}|u(gx)|dx\nonumber\\
&\leq
&C\|u\|_{L^{p,\infty}(X)}\|\chi_{B(0,e^{-s})}\|_{L^{p,1}(X)}\\
&=&C\|u\|_{L^{p,\infty}(X)}|B(0,e^{-s})|^{1/p}.\nonumber
\end{eqnarray*}
From above and (\ref{ballmeasure})  we get  for large $s$
$$|u(g)|=|u(k_1a_s)|\leq C\|u\|_{L^{p,\infty}(X)}e^{\frac{d}{p'}s}.$$
This completes the proof  when $\|u\|_{p, \infty}<\infty$.

Now we assume that $M_p(u)<\infty$. As above in this case also it
follows that if $g=k_1a_s$ then \begin{equation}|B(0,e^{-s})|u(g)|\leq
C\int_{B(0,e^{-s})}|u(gx)|dx\leq
C\int_{B(0,e^{-s}+s)}|u(x)|dx,\label{eqn1} \end{equation} as $d(0,x)\leq
e^{-s}$ implies $d(0,gx)\leq d(0,g)+d(g,gx)\leq
s+ e^{-s}$ and consequently $g B(0,e^{-s})\subset
B(0,e^{-s}+s).$ By H\"{o}lder's inequality from (\ref{eqn1}), we get
\begin{equation} |B(0,e^{-s})|u(ka_s)|\leq
\left(\int_{B(0,e^{-s}+s)}|u(x)|^{p}dx\right)^{1/p}|B(0,e^{-s}+s)|^{1/p'}.\label{eqn3}
\end{equation} Since $|B(0,e^{-s})|=e^{-ds}$ and for large $s,$
$|B(0,e^{-s}+s)|\leq e^{4\rho s}$, it follows from (\ref{eqn3}) that
$$|u(ka_s)|\leq ce^{(d+\frac{1}{p})s}e^{\frac{4\rho s}{p'}}M_p(u), \text{ for all large } s.$$
Lastly we assume that $\mathcal A_{p,q}(u)<\infty$.  From
(\ref{eqn1}) it follows that for all large $s$
\begin{equation}|B(0,e^{-s})|u(g)|\leq C\int
_{B(0,e^{-s}+s)}|u(x)|dx\leq C\int_{B(0,2s)}|u(x)|dx\leq C\int_0^{2s}\int_K|u(ka_t)|J(t)dtdk\label{eqn2}\end{equation}
Hence from (\ref{eqn2}) we get,
\begin{eqnarray}|B(0,e^{-s})|u(g)|&\leq&\int_0^{2s}\left(\int_K|u(ka_t)|^q dk\right)^{1/q}J(t)dt\le \mathcal A_{p,q}(u)|B(0,2s)|^{1/p'}.
\label{eqn2-1}
\end{eqnarray} Since for large
$s,$ $|B(0,e^{2s})|\asymp C e^{4\rho s}$ it follows from
(\ref{ballmeasure}) and (\ref{eqn2-1}) that
$$|u(ka_s)|\leq C e^{d s}e^{\frac{4\rho s}{p'}}\mathcal A_{p,q}(u)\leq
C \mathcal A_{p,q}(u)e^{\alpha s} \text{ for all large } s.$$
This completes the proof.
\end{proof}
The next lemma compares $M_p(u)$ and $\|u\|_{p,\infty}$ when $u$ is
an eigenfunctions of $\Delta$. The technique  can be traced back to
\cite{Sj}.
\begin{lemma}
If $u\in C(X)\cap L^{p,\infty}(X),$ $1<p<\infty$ and there exist
$\alpha>0,$ $C>0$ such that for all $t>0,$ and $k\in K,$
$|u(ka_t)|\leq C\|u\|_{L^{p,\infty}(X)}e^{\alpha t}$ then for all $R>0$,
\begin{equation}
\int_{B(0,R)}|u(x)|^pdx\leq
C_{\alpha,p}\|u\|_{L^{p,\infty}(X)}R,\quad\text{equivalently}\quad  M_p(u)\leq
C_{\alpha,p}\|u\|_{L^{p,\infty}(X)}. \label{basic} \end{equation}
\label{mp}
\end{lemma}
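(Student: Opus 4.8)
The plan is to estimate $\int_{B(0,R)}|u|^p$ by the layer-cake (Cavalieri) formula, feeding in the two pieces of information available: the weak-$L^p$ control of the distribution function, $d_u(\lambda)=|\{x\in X:|u(x)|>\lambda\}|\le C_p\|u\|_{p,\infty}^p\lambda^{-p}$ (the constant $C_p$ accounting for the passage from the quasi-norm appearing in the distribution-function inequality to the equivalent Banach norm $\|\cdot\|_{p,\infty}$), together with the pointwise hypothesis, which restricted to $B(0,R)$ simply says that $|u|$ does not exceed the ceiling $M_0:=C\|u\|_{p,\infty}e^{\alpha R}$ on that ball. Writing $A:=\|u\|_{p,\infty}$ for brevity, this gives
\[\int_{B(0,R)}|u(x)|^p\,dx=p\int_0^{M_0}\lambda^{p-1}\bigl|\{x\in B(0,R):|u(x)|>\lambda\}\bigr|\,d\lambda,\]
where the measure inside is bounded by $\min\bigl(|B(0,R)|,\ C_pA^p\lambda^{-p}\bigr)$.

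Then I would split this $\lambda$-integral at the crossover value $\lambda_\ast=(C_p)^{1/p}A\,|B(0,R)|^{-1/p}$, using the volume of the ball on $0<\lambda\le\lambda_\ast$ and the weak-$L^p$ bound on $\lambda_\ast<\lambda\le M_0$. The low-frequency piece contributes $|B(0,R)|\lambda_\ast^p=C_pA^p$, and the high-frequency piece contributes $pC_pA^p\log(M_0/\lambda_\ast)$, so that
\[\int_{B(0,R)}|u(x)|^p\,dx\le C_pA^p\Bigl(1+p\log\tfrac{M_0}{\lambda_\ast}\Bigr),\qquad R\ \text{large}.\]
Now $M_0/\lambda_\ast\asymp e^{\alpha R}|B(0,R)|^{1/p}$, and by the exponential volume growth $|B(0,R)|\lesssim e^{2\rho R}$ for $R\ge1$ — which is contained in (\ref{polar-2}) — one gets $\log(M_0/\lambda_\ast)\le C'+(\alpha+2\rho/p)R$, hence $\int_{B(0,R)}|u|^p\le C_{\alpha,p}A^pR$ for $R\ge1$. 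For $0<R<1$ there is nothing to split: the hypothesis already gives $|u|\le Ce^{\alpha}A$ on $B(0,R)$, while $|B(0,R)|\asymp R^d\le R$, so the same bound holds there too. Dividing by $R$ and taking $\limsup_{R\to\infty}$ yields $M_p(u)\le C_{\alpha,p}A=C_{\alpha,p}\|u\|_{p,\infty}$, as claimed.

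The one point needing genuine care is the logarithmic factor produced by the upper endpoint $M_0$ of the layer-cake split: this is precisely where the exponential — rather than faster-than-exponential — form of the growth hypothesis enters in an essential way, because combined with the (also exponential) volume growth of balls it forces $\log(M_0/\lambda_\ast)$ to grow only linearly in $R$, with no surviving power of $\log R$; a super-exponential ceiling $M_0$ would destroy the linear-in-$R$ bound and with it the finiteness of $M_p(u)$. The remaining ingredients — the Cavalieri identity, the crossover optimization, and the volume estimate from (\ref{polar-2}) — are routine, and the only bookkeeping is tracking how the constant $C_p$ relating the two equivalent weak-$L^p$ norms is absorbed into $C_{\alpha,p}$.
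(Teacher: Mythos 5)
Your argument is correct and is essentially the paper's proof recast in the dual language of distribution functions (layer-cake) rather than decreasing rearrangements: both truncate to $B(0,R)$, insert the minimum of the weak-$L^p$ tail bound and the pointwise exponential ceiling $C\|u\|_{p,\infty}e^{\alpha R}$, and split at the crossover, where the resulting logarithmic term grows only linearly in $R$ because of the exponential volume bound $|B(0,R)|\lesssim e^{2\rho R}$. Your $\log(M_0/\lambda_\ast)$ is exactly the paper's $\log|B(0,R)|+p\alpha R$ term after the change of variable $\lambda=u^\ast(t)$, and the small-$R$ case is handled the same way.
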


\begin{proof}
Let $u^*$ be the  decreasing rearrangement of $u$. Then   it follows
from the definition  that \begin{equation} u^{*}(s)^p\leq
\frac{1}{s}\|u\|_{p,\infty}^p, \text{ for all }
s>0. \label{weaklp}\end{equation} We also have (\cite[p. 64, Proposition
1.4.5, (7), (11)]{Graf})
\begin{equation} \int_{B(0,R)}|u(x)|^pdx\leq
c\int_0^\infty (\chi_{B(0,R)}|u|)^{*}(t)^p dt.\label{rearrange} \end{equation}
Since $u$ grows at most exponentially, we have,
\begin{equation}|\chi_{B(0,R)}(x)u(x)|\leq \|u\|_{L^{p,\infty}(X)}e^{\alpha
R}\chi_{B(0,R)}(x).\label{rearrange-1}\end{equation} As $(\chi_{B(0,R)})^{*}=
\chi_{(0, |B(0,R)|)}$ it follows from (\ref{rearrange-1}) that
(\cite{Graf}, Proposition 1.4.5, (3)) \begin{equation}
(\chi_{B(0,R)}|u|)^{*}(t)^p\leq C\|u\|_{p,\infty}^pe^{\alpha p
R}\chi_{(0, |B(0,R)|)}(t).\label{rearrange-est} \end{equation}

When  $R>1$, we have from (\ref{weaklp}), (\ref{rearrange}) and
(\ref{rearrange-est})
\begin{eqnarray} \int_{B(0,R)}|u(x)|^pdx&\leq&
C\int_0^{|B(0,R)|}\min\left\{\|u\|_{p,\infty}^pe^{p\alpha
R},\|u\|_{p,\infty}^p\frac{1}{t}\right\}dt\nonumber\\
&\leq& C\|u\|_{p,\infty}^p\int_0^{e^{2\rho r}}\min\left\{e^{p\alpha
R},\frac{1}{t}\right\}dt\nonumber\\
&=&C\|u\|_{p,\infty}^p\left(\int_0^{e^{-p\alpha R}}e^{p\alpha
R}dt+\int_{e^{-p\alpha R}}^{e^{2\rho
r}}\frac{dt}{t}\right)\nonumber\\
&=&C\|u\|_{p,\infty}^p\left(1+2\rho R+p\alpha
R\right)\nonumber\\
&\leq & C_p\|u\|_{p,\infty}^p R.\label{Rbigger1} \end{eqnarray}

When  $R\leq 1$,   $|B(0,R)|\asymp R^{d-1}$  and  for
all $t\in [0,R^{d-1}]$ we have,
$$\min\{e^{p\alpha R},\frac{1}{t}\}\leq Ce^{p\alpha R}\leq
Ce^{p\alpha}.$$ Hence \begin{equation} \int_0^{|B(0,R)|}\min\{e^{p\alpha
R},\frac{1}{t}\}dt\leq C_{\alpha,p}\int_0^{R^{d-1}}e^{p\alpha}dt\leq
C_{\alpha,p}R,\label{Rlesser1} \end{equation} as $d\geq 2.$ Combining
(\ref{Rbigger1}) and (\ref{Rlesser1}) it follows that
$$\int_{B(0,R)}|u(x)|^pdx\leq C_{\alpha,p}\|u\|_{p,\infty}^pR, \text{ for all  } R>0.\qquad\qquad\qquad\qquad\qquad\qquad\qquad\qquad\qquad\qquad\qedhere$$
\end{proof}
\subsection{Main results on characterization}
We need the following results.

\noindent (I)  Lt $\lambda=\alpha+i\gamma_{p'}\rho$, $1<p<2$, $\alpha\in \R$. For $t>0$ and a measurable function $f$ on $K/M$ we define the maximal function
\[\wtilde{f}(k)=\sup_{t>0}\phi_{i\gamma_p\rho}(a_t)^{-1} (e^{-(i\lambda+\rho)H(a_t^{-1}\cdot)}\ast f)(k)\] where the convolution is on $K$.
From this we have the pointwise estimate
\begin{align}\mathcal P_{\alpha+i\gamma_{p'}\rho}f(ka_t)|&\le C\phi_{i\gamma_p\rho}(a_t)\wtilde{f}(k).\label{mike}\end{align}
Michelson (\cite{M}) proved that $\wtilde{f}$ satisfies,
$\|\wtilde{f}\|_{L^r(K/M)}\le C\|f\|_{L^r(K/M)}, 1<r<\infty$. (see also \cite{Lo-Ry}).

\noindent (II)  For a
nonnegative continuous function $\Phi$ defined on $[1,\infty )$ if
there exists a constant $C>0$ such that $\int_1^R\Phi(t)dt\leq C R,$
for all $R>1$ then (see \cite{Sj})
\begin{equation}\liminf_{t\to \infty}\Phi(t)<\infty. \label{calculus}
\end{equation}
The next result  can be considered as an $L^p$ version of Theorem
\ref{iobs} (a).
\begin{theorem} Suppose that
$1<p<2$ and $u\in E_{\lambda}$ for some $\lambda=\alpha
+i\gamma_{p'}\rho,$ $\alpha\in\R.$ If $M_{p'}(u)<\infty$
then $u=\mathcal P_{\lambda}f$ for
some $f\in L^{p'}(K/M).$ Moreover $M_{p'}(u)\asymp  \|f\|_{L^{p'}(K/M)}$.
 \label{Mpconjecture}
\end{theorem}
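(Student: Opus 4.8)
The plan is to prove the stated equivalence in two halves — the bound $M_{p'}(u)\lesssim\|f\|_{L^{p'}(K/M)}$ when $u=\mathcal P_\lambda f$, and conversely the production of such an $f$ with $\|f\|_{L^{p'}(K/M)}\lesssim M_{p'}(u)$ — and then read off $M_{p'}(u)\asymp\|f\|_{L^{p'}(K/M)}$. Throughout, write $\lambda=\alpha+i\gamma_{p'}\rho$ and note $\gamma_{p'}=2/p'-1=-\gamma_p<0$, so $\Im\lambda=-\gamma_p\rho<0$; this places us squarely in the range where Theorem \ref{lewis} and Lemma \ref{lemma-Ben} apply. The one computation that drives everything is that, by \eqref{exact-estimate-1} together with $\phi_\lambda=\phi_{-\lambda}$, one has $|\phi_\lambda(a_t)|\asymp\phi_{i\gamma_p\rho}(a_t)\asymp e^{-(2\rho/p')t}$, whence, since $J(t)\asymp e^{2\rho t}$ for $t\ge1$,
\[|\phi_\lambda(a_t)|^{p'}J(t)\ \asymp\ 1\qquad (t\ge1).\]
Thus the geodesic annulus at distance $t$ contributes a bounded amount to $\int_{B(0,R)}|\phi_\lambda|^{p'}$, which is exactly what the normalisation $R^{-1}\int_{B(0,R)}$ defining $M_{p'}$ is calibrated to detect.

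\emph{Sufficiency.} Suppose $u=\mathcal P_\lambda f$ with $f\in L^{p'}(K/M)$. I would start from the pointwise bound \eqref{mike}, $|u(ka_t)|\le C\,\phi_{i\gamma_p\rho}(a_t)\,\wtilde{f}(k)$, and apply Michelson's maximal estimate (item (I)) with $r=p'$ to get $\int_K|u(ka_t)|^{p'}\,dk\le C\,\phi_{i\gamma_p\rho}(a_t)^{p'}\|f\|_{L^{p'}(K/M)}^{p'}$. Integrating in polar coordinates and using the displayed identity (together with boundedness of the integrand on $[0,1]$) yields $\int_{B(0,R)}|u|^{p'}\le C\|f\|_{L^{p'}(K/M)}^{p'}R$ for $R\ge1$, hence $M_{p'}(u)\le C\|f\|_{L^{p'}(K/M)}$.

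\emph{Necessity.} Now assume $M_{p'}(u)<\infty$. First, Lemma \ref{exponential} (legitimately applied with the norm $M_{p'}$, since $p'\in(1,\infty)$) gives $|u(ka_s)|\le C M_{p'}(u)e^{\alpha_0 s}$ for some $\alpha_0>0$; by Theorem \ref{lewis} this forces $u=\mathcal P_\lambda T$ for a \emph{distribution} $T$ on $K/M$, and then Lemma \ref{lemma-Ben} tells us the boundary slices $\psi_t:=\phi_\lambda(a_t)^{-1}u(\cdot\,a_t)$ converge to $T$ in $\mathcal D'(K/M)$ as $t\to\infty$. Next, from $|\phi_\lambda(a_t)|^{p'}J(t)\asymp1$,
\[\int_1^R\|\psi_t\|_{L^{p'}(K/M)}^{p'}\,dt\ \asymp\ \int_1^R\Big(\int_K|u(ka_t)|^{p'}\,dk\Big)J(t)\,dt\ \le\ \int_{B(0,R)}|u|^{p'}\,dx,\]
and the far right is $\le(M_{p'}(u)^{p'}+o(1))R$. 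Applying the argument behind \eqref{calculus} to the continuous function $\Phi(t)=\|\psi_t\|_{L^{p'}(K/M)}^{p'}$ — in quantitative form, $\liminf_{t\to\infty}\Phi(t)\le\limsup_{R\to\infty}R^{-1}\int_1^R\Phi\le C M_{p'}(u)^{p'}$ — produces a sequence $t_n\to\infty$ with $\|\psi_{t_n}\|_{L^{p'}(K/M)}\le C M_{p'}(u)$. Since $1<p'<\infty$, $L^{p'}(K/M)$ is reflexive, so a subsequence of $\{\psi_{t_n}\}$ converges weakly in $L^{p'}(K/M)$ to some $f$ with $\|f\|_{L^{p'}(K/M)}\le C M_{p'}(u)$; pairing against $C^\infty(K/M)$ identifies this weak limit with the $\mathcal D'$-limit $T$, so $T=f\in L^{p'}(K/M)$ and $u=\mathcal P_\lambda f$. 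Combined with the sufficiency bound this gives $M_{p'}(u)\asymp\|f\|_{L^{p'}(K/M)}$.

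\emph{Where the difficulty lies.} The substantive step is necessity: the hypothesis $M_{p'}(u)<\infty$ is only a Cesàro/density bound and does \emph{not} yield $\sup_t\|\psi_t\|_{L^{p'}}<\infty$, only control of $\liminf_t$ via \eqref{calculus}. To nonetheless conclude $T\in L^{p'}(K/M)$ one must marry this with reflexivity of $L^{p'}(K/M)$ and with the uniqueness of the distributional boundary limit in Lemma \ref{lemma-Ben}, so that the weak-$L^{p'}$ limit along the good subsequence is forced to coincide with $T$. Two further points not to be glossed over are the upgrade of the a priori analytic functional to a genuine distribution (Theorem \ref{lewis}, which needs precisely the exponential growth supplied by Lemma \ref{exponential}) and the exact measure matching $|\phi_\lambda(a_t)|^{p'}J(t)\asymp1$; at $p=2$ this matching degenerates and must be replaced by the oscillation of $\phi_\lambda$ for real parameter, which is exactly why Theorem \ref{iobs}(a) is harder than the present statement.
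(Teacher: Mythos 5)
Your proof follows the same route as the paper's: exponential growth via Lemma \ref{exponential}, a distributional Poisson representation via Theorem \ref{lewis}, the Ces\`aro-to-$\liminf$ passage via \eqref{calculus}, weak compactness of $L^{p'}(K/M)$ (Eberlein--\v{S}mulian in the paper, reflexivity in yours — the same thing here), identification of the weak limit with $T$ via Lemma \ref{lemma-Ben}, and the converse inequality $M_{p'}(u)\lesssim\|f\|_{L^{p'}(K/M)}$ via \eqref{mike} together with Michelson's maximal estimate. The only deviation is cosmetic: you spell out the sufficiency half, which the paper compresses into one line, and you add a useful remark about the calibration $|\phi_\lambda(a_t)|^{p'}J(t)\asymp 1$ that the paper leaves implicit.
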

\begin{proof}
We suppose that $M_{p'}(u)<\infty.$ It then follow that there exists
$R_0>1$ such that for all $R\geq R_0,$ $$
\int_1^R\int_K|u(ka_t)|^{p'}dke^{2\rho t}dt\leq C M_{p'}(u)^{p'}R.$$
That is, \begin{equation}
\int_1^R\int_K\left|\frac{u(ka_t)}{\phi_{\lambda}(a_t)}\right|^{p'}dkdt\asymp
\int_1^R\int_K\left|\frac{u(ka_t)}{e^{-2\rho
t/p'}}\right|^{p'}dkdt\leq CM_{p'}(u)^{p'}R.\label{polar-3} \end{equation}
 By (\ref{calculus})  there
exists a sequence $\{t_j\}\to\infty$ such that
\begin{equation}\lim_{t_j\to\infty}\int_K\left|\frac{u(ka_{t_j})}{\phi_{\lambda}(a_{t_j})}\right|^{p'}dk\leq
CM_{p'}(u)^{p'}.\label{unifb}\end{equation} This, in particular, implies that
the sequence of functions
$\left\{\frac{u(\cdot~a_{t_j})}{\phi_{\lambda}(a_{t_j})}\right\}, j=1,2\dots, \infty$
is a norm bounded family in $L^{p'}(K/M).$ By Eberlein-\v{S}mulian
theorem there exists a subsequence of $\{t_j\}$ (which we will
continue to call $\{t_j\}$) and an $f\in L^{p'}(K)$ such that \begin{equation}
\lim_{t_j\to\infty}\int_K
\frac{u(ka_{t_j})}{\phi_{\lambda}(a_{t_j})}\psi (k)dk=\int_kf(k)\psi
(k)dk \text{ for all } \psi\in L^p(K).\label{wconvergence}.\end{equation}
It also follow from (\ref{unifb}) that $\|f\|_{L^{p'}(K/M)}\leq
CM_{p'}(u).$ On the other hand, by Lemma \ref{exponential} and
Theorem \ref{lewis}, we  know that $u=\mathcal P_{\lambda}T$ for some
distribution $T$ on $K/M.$ It then follows from Lemma \ref{lemma-Ben} that
\[\lim_{t\to\infty}\int_K
\frac{u(ka_t)}{\phi_{\lambda}(a_t)}\psi (k)dk=T(\psi) \text{ for
all } \psi\in C^{\infty}(K).\] In particular, \begin{equation}
\lim_{t\rightarrow\infty}\int_K
\frac{u(ka_{t_j})}{\phi_{\lambda}(a_{t_j})}\psi
(k)dk=T(\psi) \text{ for all } \psi\in C^{\infty}(K)
\label{limit}\end{equation} From (\ref{wconvergence}) and (\ref{limit}) now we have
$$T(\psi)=\int_Kf(k)\psi (k)dk, \text{ for all } \psi\in C^{\infty}(K)$$
i. e., $T=f$. It now follows easily from (\ref{mike}) that $M_{p'}(u)=M_{p'}(\mathcal P_\lambda f)\le C\|f\|_{L^{p'}(K/M)}$.
\end{proof}
\begin{remark} \label{rmk-p-infty}
If $u\in E_\lambda\cap L^{p', \infty}(X)$, for $\lambda$ and $p$ are as in Theorem \ref{Mpconjecture}, then by Corolary \ref{lr}, $u=\mathcal P_\lambda f$ for some $f\in L^{p'}(K/M)$. Therefore by Lemma \ref{mp} and Theorem \ref{Mpconjecture} $\|f\|_{L^{p'}(K/M)}\le C \|u\|_{p', \infty}$. This along with (\ref{poisson-estimate-X}) shows
 $\|u\|_{p', \infty}\asymp \|f\|_{L^{p'}(K/M)}$.
\end{remark}
We offer  another characterization of eigenfunctions which
generalizes Theorem \ref{sos} for  $-\rho<\Im \lambda<0.$
\begin{theorem}
Let $1< p<2,$ $1<q<\infty$ and $u\in E_{\lambda}$ with
$\lambda=\alpha+i\gamma_{p'}\rho.$ Then $u=\mathcal P_\lambda f$ for some
$f\in L^q(K/M)$ if and only if $\mathcal
A_{p',q}(u)<\infty$.\label{lohoue-knapp-p}
\end{theorem}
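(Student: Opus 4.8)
The plan is to prove the two implications separately, using the preparatory material of Section 4.2 together with the Poisson-transform estimates from Section 3.1. For the \emph{easy direction}, suppose $u = \mathcal P_\lambda f$ with $f \in L^q(K/M)$, $\lambda = \alpha + i\gamma_{p'}\rho$. The pointwise bound \eqref{mike} gives $|u(ka_t)| \le C\,\phi_{i\gamma_p\rho}(a_t)\,\wtilde f(k)$, so that $\mathcal A_q(u)(a_t) \le C\,\phi_{i\gamma_p\rho}(a_t)\,\|\wtilde f\|_{L^q(K/M)} \le C\,\phi_{i\gamma_p\rho}(a_t)\,\|f\|_{L^q(K/M)}$ by Michelson's maximal theorem (item (I)). Since $\phi_{i\gamma_p\rho}(a_t) \asymp e^{-2\rho t/p'} = e^{-\rho t}\phi_0(a_t)^{?}$ — more precisely $\phi_{i\gamma_p\rho}(a_t) \asymp e^{-(2\rho/p)t}$ by \eqref{exact-estimate-1} (with the roles of $p,p'$ as in that estimate, using $\gamma_p = 2/p-1$) — one checks from the polar integration formula $\int_X g(x)\,dx = \int_K\int_0^\infty g(ka_t) J(t)\,dt\,dk$ with $J(t) \asymp e^{2\rho t}$ that $x \mapsto \phi_{i\gamma_p\rho}(|x|)$ lies in $L^{p',\infty}(X)$, whence $\mathcal A_{p',q}(u) = \|\mathcal A_q(u)\|_{p',\infty} \le C\,\|f\|_{L^q(K/M)} < \infty$. (This is essentially the $q$-averaged form of \eqref{poisson-estimate-X}.)

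For the \emph{hard direction}, assume $\mathcal A_{p',q}(u) < \infty$. First, Lemma \ref{exponential} applied to the norm $\mathcal A_{p,q}$ — wait, here the relevant hypothesis is $\mathcal A_{p',q}(u)<\infty$, so Lemma \ref{exponential} (with its $p$ replaced by $p'$) yields an exponential bound $|u(ka_s)| \le C e^{\alpha' s}$ for $s>0$; then Theorem \ref{lewis} gives $u = \mathcal P_\lambda T$ for a distribution $T$ on $K/M$. Since $\Im\lambda = \gamma_{p'}\rho < 0$ (as $p' > 2$ means $\gamma_{p'} = 2/p' - 1 < 0$), Lemma \ref{lemma-Ben} applies: $u(ka_t)/\phi_\lambda(a_t) \to T$ in the distribution sense as $t \to \infty$. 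The task is therefore to upgrade $T$ from a distribution to an $L^q(K/M)$ function, and identify it. The mechanism is the same as in the proof of Theorem \ref{Mpconjecture}: I would show $\mathcal A_{p',q}(u) < \infty$ forces, via the polar reexpression of the weak-$L^{p'}$ quasinorm and the elementary calculus fact \eqref{calculus}, the existence of a sequence $t_j \to \infty$ along which $\bigl\| u(\,\cdot\, a_{t_j})/\phi_\lambda(a_{t_j}) \bigr\|_{L^q(K/M)}$ stays bounded. Concretely, writing $v(t) = \int_K |u(ka_t)|^q\,dk = \mathcal A_q(u)(a_t)^q$, the condition $\|\mathcal A_q(u)\|_{p',\infty} < \infty$ means $\mathcal A_q(u) \in L^{p',\infty}(X)$; pulling this back through polar coordinates and using that $\phi_\lambda(a_t) \asymp e^{-2\rho t/p'}$, one gets $\int_1^R v(t)^{1/q}\big/\phi_\lambda(a_t) \cdot (\text{something}) \le CR$ — the precise bookkeeping is to arrange an inequality of the form $\int_1^R \Phi(t)\,dt \le CR$ with $\Phi(t) = \bigl(v(t)/\phi_\lambda(a_t)^q\bigr)$, after absorbing $J(t) \asymp e^{2\rho t} \asymp \phi_\lambda(a_t)^{-p'}$ and Hölder/weak-type juggling relating the $L^{p',\infty}$ size of $\mathcal A_q(u)$ to an honest integral. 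Then \eqref{calculus} supplies $t_j \to \infty$ with $\|u(\cdot a_{t_j})/\phi_\lambda(a_{t_j})\|_{L^q(K/M)} \le C$, so by reflexivity of $L^q$ ($1 < q < \infty$) and Eberlein–Šmulian a subsequence converges weakly to some $f \in L^q(K/M)$ with $\|f\|_{L^q} \le C\mathcal A_{p',q}(u)$. Matching this weak limit against the distributional limit from Lemma \ref{lemma-Ben} gives $T = f$, hence $u = \mathcal P_\lambda f$ with $f \in L^q(K/M)$, and the norm equivalence $\mathcal A_{p',q}(u) \asymp \|f\|_{L^q(K/M)}$ follows by combining this with the easy direction.

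The main obstacle is the hard direction's \emph{size-to-integrability} step: translating the weak-$L^{p'}$ control of the spherical average $\mathcal A_q(u)$ into an $\int_1^R \Phi \le CR$ estimate of the right $\Phi$. The subtlety is that $\|\mathcal A_q(u)\|_{p',\infty}$ is a \emph{weak}-type quantity on $X$, not an $L^{p'}$ integral, so one cannot directly integrate; the remedy — as in Lemma \ref{mp} and its Sjögren-style rearrangement argument — is to first use the exponential growth bound from Lemma \ref{exponential} to truncate, then apply the decreasing-rearrangement inequality $\int_{B(0,R)} |\mathcal A_q(u)(x)|^{p'}\,dx \le C\|\mathcal A_q(u)\|_{p',\infty}^{p'}\, R$ (exactly Lemma \ref{mp} with $u$ replaced by $\mathcal A_q(u)$, which is legitimate once one checks $\mathcal A_q(u)$ inherits the exponential bound), and finally unwind polar coordinates. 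A secondary point requiring care is the direction of the inequality $\Im\lambda < 0$: one must verify $\gamma_{p'}\rho < 0$ for $1 < p < 2$ so that Lemma \ref{lemma-Ben}, Theorem \ref{sj}, and Corollary \ref{lr} are all applicable with $\lambda = \alpha + i\gamma_{p'}\rho$ — indeed $p' > 2 \Rightarrow \gamma_{p'} = 2/p' - 1 < 0$, so $\phi_{\Im\lambda} = \phi_{\gamma_{p'}\rho}$ decays like $e^{-2\rho t/p'}$, consistent with the weak-$L^{p'}$ framework. Once these two technical hinges are in place, the argument is a structural copy of the proof of Theorem \ref{Mpconjecture}, now carried out at the level of $q$-spherical averages rather than pointwise on $K/M$.
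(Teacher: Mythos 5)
Your proposal follows the paper's proof essentially step for step: the easy direction via the Michelson maximal-function bound \eqref{mike} plus the asymptotics of $\phi_{i\gamma_p\rho}$, and the hard direction via Lemma~\ref{exponential} $\to$ Theorem~\ref{lewis} $\to$ Lemma~\ref{lemma-Ben}, a Sj\"ogren-style rearrangement estimate modeled on Lemma~\ref{mp} applied to $\mathcal A_q(u)$, the calculus fact \eqref{calculus} to extract a good sequence $t_j\to\infty$, and finally Eberlein--\v{S}mulian to identify the distributional boundary value $T$ with an $L^q(K/M)$ function. The one bookkeeping slip worth flagging is that the function $\Phi$ fed into \eqref{calculus} must be $\Phi(t)=\bigl(\mathcal A_q(u)(a_t)/\phi_\lambda(a_t)\bigr)^{p'}$, not the $q$-th power you tentatively wrote: only the exponent $p'$ converts $\int_1^R\Phi$ into $\int_{E_R}\mathcal A_q(u)^{p'}\,dx$ (because $\phi_\lambda(a_t)^{-p'}\asymp J(t)$), which is what the weak-$L^{p'}$ rearrangement argument of Lemma~\ref{mp} controls by $CR$; after that, \eqref{calculus} still yields $\liminf_{t\to\infty}\|u(\cdot\,a_t)/\phi_\lambda(a_t)\|_{L^q(K/M)}<\infty$, and the rest of your argument goes through unchanged.
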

\begin{proof}
We  assume that $\mathcal A_{p',q}(u)<\infty$, i.e.,
\begin{equation} s{\bf \mathcal A}_q(u)^*(s)^{p'}\le \mathcal A_{p',
q}(u)^{p'},\text{ for all } s>0.\label{mq-est4} \end{equation}

By Lemma \ref{exponential},  we have that for all $k\in K$ and
$t>0,$
$$|u(ka_t)|\le C \mathcal
A_{p',q}(u)e^{\alpha t}$$ which in particular implies by Theorem \ref{lewis} that $u=\mathcal P_\lambda T$ for some distribution $T$ on $K/M$.
Taking $L^p$-norm on $K$ of both sides of the inequality above we get for all $t>0$
\begin{equation} \mathcal A_q(u)(a_t)\le C \mathcal A_{p', q}(u) e^{\alpha t}.
\label{mq-est}\end{equation} Using  estimate of $\phi_{i\gamma_p\rho}$ (see
section 2) we have for all $R>1$
\begin{eqnarray}\int_1^R\left(\int_K\left|\frac{u(ka_t)}{\phi_{i\gamma_{p'}\rho(a_t)}}\right|^qdk\right)^{p'/q}dt
&\asymp& \int_{E_R}\mathcal A_q(u)(x)^{p'}dx\nonumber\\
&\leq&C\int_0^{\infty}(\chi_{E_R}\mathcal A_q(u))^*(s)^{p'}ds
\label{mq-est2},\end{eqnarray} where $E_R=\{x\in X\mid 1\leq |x|\leq R\}.$
As $\chi_{E_R}^*=\chi_{[0,|E_R|)}$ it follows from (\ref{mq-est})
that
\begin{equation} (\chi_{E_R}\mathcal A_q(u))^*(s)^{p'}\leq C {\bf \mathcal
A}_{p', q}(u)^{p'} e^{p'\alpha R}\chi_{[0,|E_R|)}(s), \text{ for
all } s>0. \label{mq-est3}\end{equation} Hence, as in the proof of Theorem
\ref{Mpconjecture}, it follows from (\ref{mq-est2}), (\ref{mq-est3})
and (\ref{mq-est4}) that
\begin{eqnarray}
\int_1^R\left(\int_K\left|\frac{u(ka_t)}{\phi_{i\gamma_{p'}\rho(a_t)}}\right|^qdk\right)^{p'/q}dt&\leq&
C\mathcal
A_{p', q}(u)^{p'}\int_0^{|E_R|}\min \left\{\frac{1}{t},e^{p'\alpha R}\right\}dt\nonumber\\
&\leq &C_{\alpha,p}\mathcal A_{p', q}(u)^{p'}R\label{mq-est5} \end{eqnarray}
By  (\ref{calculus}) there is a sequence $\{t_j\}\rightarrow \infty$
such that,
$$\lim_{j\rightarrow\infty}\left(\int_K\left|\frac{u(ka_t)}{\phi_{i\gamma_{p'}\rho(a_t)}}\right|^qdk\right)^{1/q}
\leq C\mathcal A_{p', q}(u).$$ Hence by Lemma \ref{lemma-Ben}  we have
\begin{equation}
\lim_{j\rightarrow\infty}\int_K\frac{u(ka_t)}{\phi_{i\gamma_p\rho}(a_t)}\psi(k)dk=T(\psi), \text{ for all } \psi\in C^{\infty}(K/M) \label{helgason}
\end{equation} for some distribution $T$ on $K/M$.   As in Theorem \ref{Mpconjecture}, an application of
Eberlein-\v{S}mulian theorem  shows that there exists $f\in
L^q(K/M)$ such that $u=\mathcal P_\lambda f$ and
$$\|f\|_{L^q(K/M)}\leq C \mathcal
A_{p', q}(u).$$
Using (\ref{mike}) and the estimate of the maximal function $\wtilde{f}$ mentioned there we have,
$$\mathcal A_q(u)(a_t) = \left(\int_{K/M}\left|\mathcal P_{i\gamma_{p'}\rho}f(ka_t)
\right|^q\right)^{1/q}\leq C
\|f\|_{L^q(K/M)}\phi_{i\gamma_{p'}\rho}(a_t).$$ The  estimate
of $\phi_{i\gamma_p\rho}$ (see section 2) then shows that $\mathcal A_q(u)\in
L^{p',\infty}(X).$
\end{proof}
\begin{remark}
It is clear that for $1<p<2$, $1<q<\infty$ and $\lambda=\alpha-i\gamma_p\rho$,
$\alpha\in\R$, there exists a positive constant $C$ such that for all
$u$ we have $\mathcal A_{p',q}(u)\leq C\sup_{\{a_t \mid
t>0\}}\phi_{i\gamma_p\rho}(a_t)^{-1}\mathcal A_q(u)(a_t)$. Thus it
follows that Theorem \ref{lohoue-knapp-p} generalizes Theorem
\ref{iobs} for these values of $\lambda$.
\end{remark}

In the rest of the section we shall consider only
$\lambda\in\R^\times$. We know from
Corollary \ref{lr} that if $1<p<2$ and $\Im\lambda=\gamma_{p'}\rho$
then $\mathcal P_{\lambda}f\in L^{p',\infty}(X)$ whenever $f\in L^{p'}(K/M).$
It is not known to us  whether $\lambda\in\R^\times$ and
$f\in L^2(K/M)$ ensure that the Poisson transform $\mathcal P_{\lambda}f\in
L^{2,\infty}(X).$ However the following result shows that the
converse is true.

\begin{theorem}
If $u\in E_{\lambda}\cap L^{2,\infty}(X)$ for some
$\lambda\in\R^\times$ then $u=\mathcal P_{\lambda}f$ for some $f\in
L^2(K/M).$\label{weakl2case}
\end{theorem}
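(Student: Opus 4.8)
The plan is to reduce the statement to Theorem \ref{iobs}(a) by showing that the weak $L^2$ hypothesis forces $M_2(u)<\infty$; all the real work has already been packaged into the preparatory lemmas.

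First I would note that $u\in E_\lambda\subset C^\infty(X)$, so $u$ is continuous, and by hypothesis $u\in L^{2,\infty}(X)$. Applying Lemma \ref{exponential} with $p=2$ and the norm $\|\cdot\|_{2,\infty}$ produces constants $\alpha>0$ and $C>0$ (depending on $u$) such that $|u(ka_t)|\le C\|u\|_{2,\infty}e^{\alpha t}$ for all $t>0$ and $k\in K$ (concretely one may take $\alpha=d/2$, $d=\dim X$). Thus $u$ is a continuous function in $L^{2,\infty}(X)$ growing at most exponentially along $a_t$, which is exactly the hypothesis of Lemma \ref{mp}. Invoking Lemma \ref{mp} with $p=2$ then gives $\int_{B(0,R)}|u(x)|^2\,dx\le C_{\alpha,2}\|u\|_{2,\infty}^2\,R$ for all $R>0$, that is, $M_2(u)\le C_{\alpha,2}\|u\|_{2,\infty}<\infty$.

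Finally, since $\lambda\in\R^\times$ and $M_2(u)<\infty$, Theorem \ref{iobs}(a) yields an $f\in L^2(K/M)$ with $u=\mathcal P_\lambda f$, which is the assertion. As a byproduct, the identity $M_2(u)=|\hc(\lambda)|\,\|f\|_{L^2(K/M)}$ from Theorem \ref{iobs}(a) combined with the estimate above gives the quantitative bound $\|f\|_{L^2(K/M)}\le C\|u\|_{2,\infty}$, in the spirit of Remark \ref{rmk-p-infty}.

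I do not expect a genuine obstacle here: the two substantive facts — that a weak-$L^2$ eigenfunction grows at most exponentially, and that this exponential control upgrades weak-$L^2$ membership to finiteness of the average $M_2$ — are precisely what Lemmas \ref{exponential} and \ref{mp} supply, while the passage from $M_2(u)<\infty$ to a genuine $L^2$ boundary function is the deep theorem of Ionescu and Boussejra et al. recorded as Theorem \ref{iobs}(a). The proof is thus a short assembly of these inputs; the only minor point to keep straight is that Lemma \ref{mp} genuinely needs both the weak-$L^2$ bound and the exponential growth, both of which are in hand.
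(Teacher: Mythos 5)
Your proposal is correct and follows essentially the same route as the paper's own two-line proof: apply Lemma \ref{exponential} and Lemma \ref{mp} to upgrade the weak-$L^2$ hypothesis to $M_2(u)\le C\|u\|_{2,\infty}<\infty$, then invoke Theorem \ref{iobs}(a). The extra observation about the quantitative bound $\|f\|_{L^2(K/M)}\le C\|u\|_{2,\infty}$ is a nice addition in the spirit of Remark \ref{rmk-p-infty}, but the core argument is identical.
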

\begin{proof}
By Lemma \ref{exponential} and Lemma \ref{mp} it follows that
$M_2(u)\leq C\|u\|_{L^{2,\infty}(X)}.$ Applying Theorem \ref{iobs}
(a), we conclude that $u=\mathcal P_{\lambda}f$ for some $f\in L^2(K/M).$
\end{proof}

The last  result of this section  will be restricted  to the hyperbolic spaces over $\R,$ $\C,$ or
$\mathbb H.$ We will need  the following  results proved in
\cite[Theorem B and Theorem A, (ii)]{Bou-Sami}:

\noindent{\bf (I)} If $1<p<\infty$ and $\lambda\in\R^\times$
then there exists a positive constant $C$ such that for all $f\in
L^p(K/M),$ the following estimate holds, \begin{equation}
\mbox{sup}_{t>0}~e^{\rho
t}\left(\int_{K/M}|\mathcal P_{\lambda}f(ka_t)|^pdk\right)^{1/p}\leq
C_p\|f\|_{L^p(K/M)}.\label{sami1} \end{equation}

\noindent{\bf (II)} Let $f\in L^2(K/M)$ and
$\lambda\in\R^\times$. We define $f_R(b)=\frac 1R
|\hc(\lambda)|^{-2}\int_{B(0,R)}\mathcal P_{\lambda}f(x)e^{(i\lambda-\rho)H(x^{-1}b)}dx$ for $b\in K/M$.
Then \begin{equation} f_R\longrightarrow  f \text{ in } L^2(K/M)
\text{ as } R\rightarrow\infty. \label{sami2}\end{equation}

\begin{theorem}
Let $1<q<\infty$ and $X$ be hyperbolic space over $\R,\C$ or
$\mathbb H.$ If $u\in E_{\lambda}$ with $\lambda\in\R^\times$
then $u=\mathcal P_{\lambda}f$, for some $f\in L^q(K/M)$ if and only if
$\mathcal A_{2,q}(u)<\infty$\  $($consequently if and only if
$\sup_{t>0}e^{\rho t} \mathcal A_q(u)<\infty$$)$. \label{samib}
\end{theorem}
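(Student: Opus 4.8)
## Proof proposal for Theorem \ref{samib}

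\textbf{Overall strategy.} The plan is to mimic the structure already used in the proofs of Theorems \ref{Mpconjecture} and \ref{lohoue-knapp-p}, with the strip eigenvalue $\lambda = \alpha + i\gamma_{p'}\rho$ replaced by a real $\lambda \in \R^\times$, and with the Lorentz space $L^{p',\infty}$ replaced by the weak-$L^2$ condition that comes packaged inside $\mathcal A_{2,q}(u) < \infty$. One direction is essentially already available: if $u = \mathcal P_\lambda f$ with $f \in L^q(K/M)$, then estimate \textbf{(I)} above (i.e. (\ref{sami1}) with $p=q$) gives $\sup_{t>0} e^{\rho t}\left(\int_{K/M}|\mathcal P_\lambda f(ka_t)|^q dk\right)^{1/q} \le C_q \|f\|_{L^q(K/M)}$, which is precisely $\sup_{t>0} e^{\rho t}\mathcal A_q(u)(a_t) < \infty$; and since $J(t) \asymp e^{2\rho t}$ for $t \ge 1$ while $\mathcal A_q(u)(a_t)$ decays like $e^{-\rho t}$, the function $\mathcal A_q(u)$ lies in $L^{2,\infty}(X)$, so $\mathcal A_{2,q}(u) < \infty$. (The parenthetical equivalence in the statement — between $\mathcal A_{2,q}(u)<\infty$ and $\sup_{t>0}e^{\rho t}\mathcal A_q(u)(a_t)<\infty$ for eigenfunctions — should also be recorded here; one direction is the above computation, the other will drop out of the converse argument since the produced $f$ satisfies the sup-bound via (\ref{sami1}).)

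\textbf{The converse.} Assume $\mathcal A_{2,q}(u) < \infty$. First, Lemma \ref{exponential} (applied with the norm $\mathcal A_{p,q}$, here $p = 2$) gives $|u(ka_t)| \le C\,\mathcal A_{2,q}(u)\,e^{\alpha t}$ for all $t > 0$, $k \in K$, with some $\alpha > 0$; hence by Theorem \ref{lewis}, $u = \mathcal P_\lambda T$ for a distribution $T$ on $K/M$. Taking $L^q(K/M)$-norms in $k$ of the exponential bound yields $\mathcal A_q(u)(a_t) \le C\,\mathcal A_{2,q}(u)\,e^{\alpha t}$. Next I would run the rearrangement estimate exactly as in Theorem \ref{lohoue-knapp-p}: writing $E_R = \{x : 1 \le |x| \le R\}$, using $J(t) \asymp e^{2\rho t}$ and the fact that $e^{\rho t}\mathcal A_q(u)(a_t)$ is the relevant normalization (since for $\lambda \in \R^\times$, $\phi_\lambda(a_t) \asymp e^{-\rho t}$ up to the oscillatory factor — this is where (\ref{phi-delta-asym}) is used), and combining with the weak-$L^2$ bound $s\,\mathcal A_q(u)^*(s)^2 \le \mathcal A_{2,q}(u)^2$, one obtains $\int_1^R \left(e^{\rho t}\mathcal A_q(u)(a_t)\right)^2 dt \le C_{\alpha}\,\mathcal A_{2,q}(u)^2\, R$ for all $R > 1$. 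By the calculus lemma (\ref{calculus}) there is a sequence $t_j \to \infty$ along which $e^{\rho t_j}\mathcal A_q(u)(a_{t_j}) \le C\,\mathcal A_{2,q}(u)$, so the family $\{u(\cdot\, a_{t_j})/\phi_\lambda(a_{t_j})\}_j$ is bounded in $L^q(K/M)$ — here one must use that $|\phi_\lambda(a_t)| \asymp e^{-\rho t}$ for $t \ge 1$ (again from (\ref{phi-delta-asym}), since $\lambda \in \R^\times$, the $\hc$-function contributions give $|\phi_\lambda(a_t)| \asymp e^{-\rho t}$ up to bounded oscillation — care is needed because $|\hc(\lambda)e^{i\lambda t} + \hc(-\lambda)e^{-i\lambda t}|$ can vanish; see the obstacle below).

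\textbf{Extracting $f$ and identifying $T = f$.} For $1 < q < \infty$, $L^q(K/M)$ is reflexive, so by the Eberlein--Šmulian theorem a subsequence of $\{u(\cdot\, a_{t_j})/\phi_\lambda(a_{t_j})\}$ converges weakly to some $f \in L^q(K/M)$ with $\|f\|_{L^q(K/M)} \le C\,\mathcal A_{2,q}(u)$. The remaining task is to prove $T = f$, i.e. that the weak-$L^q$ limit agrees with the distributional boundary value of the eigenfunction. For $\lambda \in \R^\times$ the clean tool of Lemma \ref{lemma-Ben} (which needed $\Im\lambda < 0$ or $\lambda = 0$) is unavailable, so I would instead invoke result \textbf{(II)} above: for $f$ to be the correct boundary function it suffices to check that $u = \mathcal P_\lambda f$, and (\ref{sami2}) (the Fourier-inversion/Kotake-type reconstruction of $f$ from $\mathcal P_\lambda f$ averaged over balls) gives exactly the bridge — one shows that the ball-averages $f_R$ built from $u = \mathcal P_\lambda T$ converge weakly to both $T$ (by a direct computation with the kernel $e^{(i\lambda-\rho)H(x^{-1}b)}$ and the asymptotics of $\phi_\lambda$, pairing against $\psi \in C^\infty(K/M)$) and, along $t_j$, to the weak limit $f$; hence $T = f \in L^q(K/M)$ and $u = \mathcal P_\lambda f$. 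Finally, once $u = \mathcal P_\lambda f$ with $f \in L^q(K/M)$, estimate (\ref{sami1}) yields the reverse bound $\sup_{t>0} e^{\rho t}\mathcal A_q(u)(a_t) \le C\|f\|_{L^q(K/M)}$, closing the equivalence.

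\textbf{Main obstacle.} The delicate point — and the reason the real-$\lambda$ case genuinely differs from Theorems \ref{Mpconjecture}--\ref{lohoue-knapp-p} — is the oscillation of $\phi_\lambda(a_t)$: unlike $\phi_{\alpha+i\gamma_{p'}\rho}(a_t) \asymp e^{-2\rho t/p'}$, the function $\phi_\lambda(a_t) = e^{-\rho t}(2\Re(\hc(\lambda)e^{i\lambda t}) + E(\lambda,t))$ has infinitely many (approximate) zeros, so dividing $u(ka_t)$ by $\phi_\lambda(a_t)$ is not uniformly safe, and the boundary-value extraction cannot be phrased as "$u(ka_t)/\phi_\lambda(a_t) \to$ boundary datum" pointwise in $t$. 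This is exactly why the proof must route through the averaged reconstruction (\ref{sami2}) / Theorem \ref{iobs}(b)'s machinery rather than through Lemma \ref{lemma-Ben}; handling the oscillation correctly in the rearrangement estimate (choosing the $t_j$ to avoid the zeros of $\phi_\lambda$, or equivalently working with $e^{\rho t}\mathcal A_q(u)(a_t)$ directly and only invoking the lower bound $|\phi_\lambda(a_t)| \gtrsim e^{-\rho t}$ on a set of $t$ of positive density) is where the real work lies. The restriction to real, complex, or quaternionic hyperbolic spaces enters only through the provenance of ingredients \textbf{(I)} and \textbf{(II)} (and Theorem \ref{iobs}), which were established in \cite{Bou-Sami} in that generality.
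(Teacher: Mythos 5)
Your forward direction is correct and matches the paper. But for the converse your plan has a genuine gap, and the paper takes a substantially different route precisely to avoid the obstacle you flagged.

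You propose to extract a sequence $t_j\to\infty$ with $e^{\rho t_j}\mathcal A_q(u)(a_{t_j})$ bounded, form the family $\{u(\cdot\,a_{t_j})/\phi_\lambda(a_{t_j})\}$, and take a weak limit in $L^q(K/M)$. You then correctly observe that this is exactly where things break: for $\lambda\in\R^\times$, $\phi_\lambda(a_t)$ oscillates with (near-)zeros of $e^{-\rho t}$-scale, so the family $u(\cdot\,a_{t_j})/\phi_\lambda(a_{t_j})$ need not be bounded even if $e^{\rho t_j}\mathcal A_q(u)(a_{t_j})$ is, and Lemma \ref{lemma-Ben} — which underpins the $T=f$ identification in Theorems~\ref{Mpconjecture} and~\ref{lohoue-knapp-p} — is explicitly unavailable for $\lambda\in\R^\times$. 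Your suggested repair (``choose $t_j$ to avoid the zeros'' or ``use the lower bound on a set of positive density'') is not carried out and would not, as stated, produce a proof; moreover your fallback to invoking (\ref{sami2}) is circular, since (\ref{sami2}) presupposes that the boundary datum is already in $L^2(K/M)$, which is precisely what is to be shown.

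The paper's proof does something different at both stages. For $q=2$ it never divides by $\phi_\lambda$: the rearrangement estimate is used to show $\int_{B(0,R)}|u|^2\,dx\le CR\,\mathcal A_{2,2}(u)^2$, i.e.\ $M_2(u)<\infty$, and then Theorem~\ref{iobs}(a) (Ionescu) produces $f\in L^2(K/M)$ directly — no boundary-value limit along $t_j$ is taken. For general $q\ne 2$, the missing device is a mollification in the $K$-variable: take an approximate identity $\{\varphi_n\}\subset C(K)$, set $u_n(ka_t)=\varphi_n\ast u(\cdot\,a_t)(k)$. Since $K$ is compact, $\mathcal A_2(u_n)(a_t)\le\|\varphi_n\|_{L^2(K)}\mathcal A_q(u)(a_t)$, so each $u_n$ falls under the already-settled $q=2$ case and gives $u_n=\mathcal P_\lambda F_n$, $F_n\in L^2(K/M)$; now (\ref{sami2}) legitimately applies to each $F_n$. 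Pairing the ball-average reconstruction of $F_n$ against $\psi\in L^{q'}(K/M)$ and using $\|\varphi_n\|_{L^1(K)}=1$ together with (\ref{sami1}) and the Cauchy--Schwarz bound $\int_0^R\mathcal A_q(u)(a_t)e^{-\rho t}J(t)\,dt\le \mathcal A_{2,q}(u)\,R$ yields a bound on $\|F_n\|_{(L^{q'})^*}$ uniform in $n$, and Eberlein--\v Smulian then produces $F\in L^q(K/M)$ with $u=\mathcal P_\lambda F$. This approximate-identity bootstrap from $q=2$ to general $q$ is the key idea missing from your proposal; it replaces the pointwise $\phi_\lambda$-normalization, which cannot be made to work for real $\lambda$, by an averaged reconstruction that never sees the zeros.
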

This  theorem  is slightly general than the conjecture  posed in
\cite{Bou-Sami} (see   the line following  Theorem \ref{iobs}).
\begin{proof} If $u=\mathcal P_\lambda f$  for some $f\in L^q(K/M)$ then by  (\ref{sami1})  $\sup_{t>0}e^{\rho t}\mathcal A_q(u)(a_t)<\infty$,
consequently $\mathcal A_{2,q}(u)\leq C\sup_{t>0}e^{\rho t} \mathcal A_q(u)<\infty$. Therefore to complete the proof it is enough to show that
$\mathcal A_{2,q}(u)<\infty$ implies $u=\mathcal P_\lambda f$ for some $f\in
L^q(K/M)$.

We first consider the case $q=2,$ that is, $u\in E_{\lambda}$ and
$\mathcal A_{2,2}(u)<\infty.$ We note that
\begin{equation}\int_{B(0,R)}|u(x)|^2dx=\int_0^R\int_K|u(ka_t)|^2dkJ(t)dt=\int_{B(0,R)}\mathcal
A_2(u)(x)^2dx.\label{expression}\end{equation} By Lemma \ref{exponential}  we
have $\mathcal A_2(u)(ka_s)\leq \mathcal A_{2, 2}(u) e^{\alpha s}$
for all $k\in K$ and $s>0.$ This implies that for all $R>0$ the
following inequality holds,
$$(\chi_{B(0,R)}\mathcal A_2(u))^*\leq C \mathcal A_{2,2}(u) e^{\alpha R}\chi_{B(0,R)}^*.$$
Using Lemma \ref{exponential}, (\ref{expression}) and the fact
$\mathcal A_2(u)\in L^{2,\infty}(X)$ it follows exactly as in the
proof of Theorem \ref{lohoue-knapp-p} that
\begin{equation}\int_{B(0,R)}|u(x)|^2dx\leq C R \mathcal
A_{2,2}(u)^2.\label{sami4}\end{equation} By Theorem \ref{iobs}
(a),   there exists $f\in L^2(K/M)$ such that $u=\mathcal P_\lambda f.$ Let
us consider the case $q\neq 2,$ that is, $u\in E_{\lambda}$ and
$\mathcal A_q(u)\in L^{2,\infty}(X).$ Let $\{\varphi_n\}\subset
C(K)$ be an approximate identity on $K.$ For each fixed $t>0$ we
define the functions on $K$: $u^{a_t}(k)=u(ka_t),\quad u_n^{a_t}(k)=\varphi_n\ast u^{a_t}(k),
\quad k\in K.$ Since $K$ is a finite measure space, we have for $q\ge 1$,
$\|u_n^{a_t}\|_{L^2(K)}\leq C \|u^{a_t}\|_{L^q(K)}\|\varphi_n\|_{L^2(K)}.$
Consequently, for all $n$ we have $\mathcal A_2(u_n)(a_t)=\left(
\int_K|u_n(ka_t)|^qdk\right)^{1/q}\leq\|\varphi_n\|_{L^2(K)}
\mathcal A_q(u)(a_t).$ So $\mathcal A_2(u_n)\in L^{2,\infty}(X).$
Using the case $q=2$ we conclude that there exists $F_n\in L^2(K/M)$
such that $u_n=\mathcal P_{\lambda}F_n$ . We also have
\begin{equation}\|u_n^{a_t}\|_{L^q(K)}\leq
\|\varphi_n\|_{L^1(K)}\|u^{a_t}\|_{L^q(K)}=\|u^{a_t}\|_{L^q(K)}=\mathcal
A_q(u)(a_t),\label{sami3}\end{equation} as $\int_K\phi_n(k)dk=1$. For $R>0$ we define
$$g_R^n(k)=\frac{1}{R}\int_{B(0,R)}u_n(x)e^{(i\lambda-\rho)H(x^{-1}k)}dx,\,\,  k\in K.$$
By (\ref{sami2}) it follows that for each fixed $n,$
\begin{equation}\lim_{R\rightarrow\infty}|\hc(\lambda)|^{-2}\int_Kg_R^n(k)\overline{\psi(k)}dk=\int_KF_n(k)\overline{\psi(k)}dk,
\text{ for all } \psi\in C(K).\label{convergence}
\end{equation}
For each $n$ we define for $\psi\in C(K)$,
$L_n(\psi)=\int_KF_n(k)\overline{\psi_n(k)}dk.$ We claim   that
$L_n\in L^{q'}(K/M)^*.$ Indeed  for $\psi\in C(K)$
\begin{eqnarray*}
\left|\int_Kg_R^n(k)\overline{\psi(k)}dk\right|&=&\frac{1}{R}\left|\int_K\int_{B(0,R)}u_n(x)e^{(i\lambda-\rho)H(x^{-1}k)}
\overline{\psi(k)}dxdk\right|\\
&=&\frac{1}{R}\left|\int_{B(0,R)}u_n(x)\overline{\mathcal P_{\lambda}\psi(x)}dx\right|\\
&\leq &\frac{1}{R}\int_0^R\left(\int_K|u_n(ka_t)|^qdk\right)^{1/q}\left(\int_K|\mathcal P_{\lambda}\psi(ka_t)|^{q'}\right)^{1/q'}J(t)dt\\
&\leq&\frac{C}{R}\|\psi\|_{L^{q'}(K/M)}\int_0^R\left(\int_K|u_n(ka_t)|^qdk\right)^{1/q}e^{-\rho
t}J(t)dt \text{ (by (\ref{sami1}))}\\
 &\le &\frac{C}{R}\|\psi\|_{L^{q'}(K/M)}\int_0^R\mathcal
A_q(u)(a_t)e^{-\rho
t}J(t)dt \text{ (by (\ref{sami3}))}\\
&\leq &\frac{C}{R}\|\psi\|_{L^{q'}(K/M)}\left(\int_0^R\mathcal
A_q(u)(a_t)^2J(t)dt\right)^{1/2}\left(\int_0^Re^{-2\rho
t}J(t)dt\right)^{1/2}\\
&\leq&\frac{C}{R}\|\psi\|_{L^{q'}(K/M)}\mathcal A_{2,q}(u)R^{1/2}R^{1/2} \text{ (by (\ref{expression}) and (\ref{sami4}))}\\
&=&C\|\psi\|_{L^{q'}(K/M)}\mathcal A_{2,q}(u)
\end{eqnarray*}

From above, using (\ref{convergence})  we  conclude that
$\left|\int_KF_n(k)\overline{\psi(k)}dk\right|\leq
C\|\psi\|_{L^{q'}(K/M)}\mathcal A_{2,q}(u),$ that is, $L_n\in
L^{q'}(K/M)^*$ with its norm as linear functional dominated by
$C\mathcal A_{2,q}(u)$ for all $n.$ By Eberlein-\v{S}mulian theorem
for reflexive spaces and Riesz representation theorem it follows
that there exists a sequence $n_j$ and $F\in L^q(K/M)$ such that
$\|F\|_{L^q(K/M)}\leq C\|\mathcal A_q(u)\|_{L^{2,\infty}(X)}$ and
$$\lim_{j\rightarrow\infty}\int_KF_{n_j}(k)\overline{\psi(k)}dk=\int_KF(k)\overline{\psi(k)}dk,
\text{ for all } \psi\in L^{q'}(K/M).$$ Since for each fixed
$x\in X$ the function $k\mapsto e^{(i\lambda-\rho)H(x^{-1}k)}$ is in
$L^{q'}(K/M)$ it follows that,

$$\mathcal P_{\lambda}F(x)=\lim_{j\rightarrow\infty}\int_KF_{n_j}(k)e^{-(i\lambda+\rho)H(x^{-1}k)}dk\nonumber\\
= \lim_{j\rightarrow\infty}u_{n_j}(x)
=\lim_{j\rightarrow\infty}\varphi_{n_j}\ast u^{a_t}(k_1)
=u(k_1a_t),$$ when $x=k_1a_t$.
 This
completes the proof.\end{proof}

\section{Roe's theorem for tempered distributions on DR spaces}
This section is one of the two technical hearts of the paper. Theorems A and B and their various analogues will use the two theorems stated and proved in this section. It was mentioned in the introduction that the DR spaces have eigenfunctions (such as $x\mapsto \wp_\lambda(x,n)$ on $S$ or $x\mapsto e^{(i\lambda+\rho)H(x^{-1}k)}$ on $X$) which are not in any Lebesgue or Lorentz spaces. However they are $L^p$-tempered distributions when $\lambda\in S_p$ (see Lemma~\ref{Mp-Apq-dist}~(b) below) and hence theorems of this section can accommodate them. (See also subsection 6.4.)

We recall that $R$ denotes the  radialization operator on the DR space $S$ (see section
2). Let $T$ be a  $L^p$-tempered distribution for a fixed $p\in (1, 2]$. The distribution $T$ is called
radial if
$$\langle T, \psi\rangle=\langle T, R(\psi)\rangle, \text{ for all } \psi\in C^p(S).$$
In general the radial part
$R(T)$ of a $L^p$-tempered distribution $T$ is  a $L^p$-tempered
distribution defined by
$$\langle R(T), \psi\rangle=\langle T, R(\psi)\rangle, \text{ for all } \psi\in C^p(S).$$ Thus when $T$ itself is radial then $T=R(T)$.
We shall say that  $T$ has {\em no
radial part} if $R(T)=0$, in other words, if $\langle T,
\psi\rangle=0$ for all $\psi\in C^p(S)^\#$.

Left translation $\ell_x$ of $T$ by an element $x\in S$  is defined
by the following: for $\psi\in C^p(S)$, let
$\psi^\ast(x)=\psi(x^{-1})$.  Then
$$\langle\ell_x T, \psi\rangle=T(\ell_{x^{-1}}\psi)=T\ast \psi^\ast(x^{-1}).$$  If $\psi$ is radial then $\psi^\ast(x)=\psi(x)$ and hence  $\langle \ell_x T, \psi\rangle=T\ast \psi(x^{-1})$. For a radial $L^p$-tempered distribution $T$, its spherical Fourier transform $\what{T}$ is defined as a linear functional on $C^p(\what{S})^\#$ by the following rule: $$\langle \what{T}, \phi\rangle=\langle T, \phi^\vee\rangle, \text{ where } \phi\in C^p(\what{S})^\#, \phi^\vee\in C^p(S)^\#\text{ and } \what{\phi^\vee}=\phi.$$

\subsection{Result for $L^2$-tempered distributions}
\begin{theorem} \label{thm-L^2-temp}
Let  $\{T_k\}_{k\in \Z}$ be a doubly infinite sequence of
$L^2$-tempered distributions on $S$ satisfying

{\em (i)} $\Delta T_k=z T_{k+1}$  for some  nonzero  $z\in \C$ and

{\em (ii)} for all $\psi\in C^2(S)$,  $|\langle T_k,
\psi\rangle|\le M \gamma(\psi)$ for some fixed seminorm $\gamma$
of $C^2(S)$ and $M>0$.

Then
\begin{enumerate}
\item[(a)] $|z|\ge \rho^2$ implies that  $\Delta T_0=-|z| T_0$ and
\item[(b)] $|z|<\rho^2$ implies $T_k=0$ for all $k\in \Z$.
\end{enumerate}
\end{theorem}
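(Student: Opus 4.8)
The plan is to transfer the problem to the radial parts and then to the spherical Fourier transform side — where the recursion $\Delta T_k=zT_{k+1}$ turns into multiplication by $\kappa(\lambda):=-(\lambda^2+\rho^2)/z$ — to analyze the support and local structure of the transformed radial distributions, and finally to recover the full $T_k$ by running the radial argument on all left translates. First I would set $\tau_k:=R(T_k)$. Since $R$ commutes with $\Delta$ and is continuous on $C^2(S)$, each $\tau_k$ is a radial $L^2$-tempered distribution with $\Delta\tau_k=z\tau_{k+1}$ and $|\langle\tau_k,\psi\rangle|\le M\gamma(R\psi)\le M_1\gamma_1(\psi)$ for one fixed seminorm $\gamma_1$ of $C^2(S)$. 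Passing to the spherical transform and using $\what{\Delta f}=-(\lambda^2+\rho^2)\what f$ (valid on $C^2(S)^\#$, hence on radial $L^2$-tempered distributions by duality and self-adjointness of $\Delta$), the functionals $\what{\tau_k}$ on the even Schwartz space $C^2(\what S)^\#$ are bounded, uniformly in $k$, by a single fixed Schwartz seminorm — so are of a common finite order, say $\le N$ — and satisfy $\what{\tau_{k+1}}=\kappa\,\what{\tau_k}$. Since $\kappa$ has no real zero, $\kappa^{-1}$ is a genuine even $C^\infty$ multiplier of polynomial growth, so $\what{\tau_k}=\kappa^k\what{\tau_0}$ for every $k\in\Z$. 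As $|\kappa(\lambda)|=(\lambda^2+\rho^2)/|z|$, the locus $\{|\kappa|=1\}$ equals $\{|\lambda|=\lambda_0\}$ with $\lambda_0:=\sqrt{|z|-\rho^2}\ge0$ when $|z|\ge\rho^2$, whereas $|\kappa|\ge\rho^2/|z|>1$ everywhere when $|z|<\rho^2$.

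For part (b), $|z|<\rho^2$, I would use a routine Leibniz estimate showing that every fixed Schwartz seminorm of $\kappa^{-n}\phi$ is at most $C(1+n)^N(|z|/\rho^2)^n$ times a fixed seminorm of $\phi$; then, writing $\langle\what{\tau_k},\phi\rangle=\langle\what{\tau_{k+n}},\kappa^{-n}\phi\rangle$ and letting $n\to\infty$, we get $\what{\tau_k}=0$, i.e. $R(T_k)=0$ for all $k$.

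For part (a), $|z|\ge\rho^2$, I would first localize the support of $\what{\tau_0}$. Testing $\langle\what{\tau_0},\phi\rangle=\langle\what{\tau_n},\kappa^{-n}\phi\rangle$ against $\phi$ supported in $\{|\lambda|\ge\lambda_0+\varepsilon\}$, where $|\kappa^{-n}|\le c_\varepsilon^{-n}$ with $c_\varepsilon>1$, shows $\what{\tau_0}$ vanishes on $\{|\lambda|>\lambda_0\}$; and, when $|z|>\rho^2$, testing $\langle\what{\tau_0},\phi\rangle=\langle\what{\tau_{-n}},\kappa^{n}\phi\rangle$ against $\phi$ supported in the compact set $\{|\lambda|\le\lambda_0-\varepsilon\}$, where $\kappa^n$ and its derivatives are $O((c_\varepsilon')^n)$ with $c_\varepsilon'<1$, shows it vanishes on $\{|\lambda|<\lambda_0\}$ (this step genuinely uses the doubly-infinite range of the sequence). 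Hence $\mathrm{supp}\,\what{\tau_0}\subseteq\{\pm\lambda_0\}$, so $\what{\tau_0}$ is a finite combination of $\delta^{(j)}_{\pm\lambda_0}$ with $j\le N$, symmetric in $\lambda\mapsto-\lambda$. I would then rule out all $j\ge1$ by pairing $\kappa^k\what{\tau_0}$ with a fixed even test function equal to $1$ near $\pm\lambda_0$: the leading term in $k$ of $(\kappa^k)^{(j)}(\lambda_0)$ is $\frac{k!}{(k-j)!}\kappa(\lambda_0)^{k-j}\kappa'(\lambda_0)^j$, which grows like $|k|^j$ because $\kappa'(\lambda_0)\ne0$ when $\lambda_0\ne0$ (for $\lambda_0=0$ one first discards odd-order deltas by evenness and uses $\kappa''(0)\ne0$); after checking that the contributions at $+\lambda_0$ and $-\lambda_0$ add rather than cancel, the uniform bound on $\langle\what{\tau_k},\cdot\rangle$ forces all coefficients with $j\ge1$ to vanish. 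So $\what{\tau_0}$ is a Dirac mass on $\{\pm\lambda_0\}$, whence $\what{\Delta\tau_0}=-(\lambda^2+\rho^2)\what{\tau_0}=-(\lambda_0^2+\rho^2)\what{\tau_0}=-|z|\what{\tau_0}$, that is $\Delta R(T_0)=-|z|R(T_0)$.

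Finally I would remove the radialization. For each $x\in S$ the sequence $\{\ell_xT_k\}$ also satisfies (i) ($\Delta$ being left-invariant) and (ii) (with an $x$-dependent but $k$-independent constant and seminorm, since $\ell_{x^{-1}}$ is continuous on $C^2(S)$), so the conclusions just obtained for radial parts apply to $\{\ell_xT_k\}$ as well. In case (b) this yields $R(\ell_xT_k)=0$ for all $x$ and $k$; convolving with a radial $C_c^\infty$ approximate identity then gives $T_k=0$ for all $k$. In case (a) it yields $\Delta R(\ell_xT_0)=-|z|R(\ell_xT_0)$ for all $x$; combined with $\Delta R(\ell_xT_0)=R(\ell_x\Delta T_0)=zR(\ell_xT_1)$ this gives $R\big(\ell_x(zT_1+|z|T_0)\big)=0$ for every $x$, and the same approximate-identity argument gives $zT_1=-|z|T_0$, i.e. $\Delta T_0=-|z|T_0$. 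The step I expect to be the main obstacle is the local structure analysis at $\{\pm\lambda_0\}$: eliminating the derivative-of-delta terms depends on the exact polynomial-in-$k$ growth rate of $(\kappa^k)^{(j)}(\pm\lambda_0)$ — so the nonvanishing of $\kappa'(\lambda_0)$, equivalently $\lambda_0\ne0$, is essential, the borderline case $\lambda_0=0$ must be handled separately, and one must verify that the two endpoint contributions do not interfere destructively.
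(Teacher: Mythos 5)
Your proposal is correct and follows the paper's skeleton closely: radialize, pass to the spherical transform where $\Delta$ becomes multiplication by $-(\lambda^2+\rho^2)$, localize the support of $\what{R(T_0)}$ at $\{\pm\lambda_0\}$ using $k\to+\infty$ for the outside and $k\to-\infty$ for the inside, then de-radialize by running the argument on all left translates $\ell_x T_k$ and convolving with a radial approximate identity. The one genuine difference is in the step that kills generalized eigenvectors. You represent $\what{\tau_0}$ via the structure theorem as a finite sum of $\delta^{(j)}_{\pm\lambda_0}$ of order $\le N$, pair $\kappa^k\what{\tau_0}$ against a fixed cutoff $\equiv 1$ near $\pm\lambda_0$, and use that $|(\kappa^k)^{(j)}(\lambda_0)|\asymp k^j$ (via $\kappa'(\lambda_0)\neq0$, or $\kappa''(0)\neq0$ in the degenerate case) to force $j=0$. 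The paper instead first proves $(\alpha^2-\lambda^2)^{N+1}\what{T_0}=0$ by a smooth-cutoff estimate, and then avoids delta-calculus entirely through an algebraic argument: taking $g=(\Delta+|z|)^{k_0-1}T_0$ with $(\Delta+|z|)^2g=0$, the binomial identity $\Delta^k g=k(-1)^{k-1}|z|^{k-1}(\Delta+|z|)g+(-1)^k|z|^k g$ together with the uniform bound on $|\langle\Delta^k g,\psi\rangle|$ forces $(\Delta+|z|)g=0$. The paper's version is uniform in $\lambda_0$ and sidesteps exactly the two points you flag as potential obstacles — it needs no separate treatment of $\lambda_0=0$ and no check on interference between $+\lambda_0$ and $-\lambda_0$ — whereas your version is more explicit. (Incidentally, the interference worry is not a real one: since $\what{\tau_0}$ only acts on even test functions, the pairing collapses to a single sum $\sum_j d_j\phi^{(j)}(\lambda_0)$, so the two endpoints contribute a single set of effective coefficients rather than competing ones.) Both routes prove the theorem; the paper's is slightly cleaner, yours is more transparent about where the polynomial growth comes from.
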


\begin{proof}
We shall divide the proof of (a) in
two parts. In the first part we shall prove the assertion with the
extra assumption that the distributions $T_k$ are radial.

\noindent {\em  Case 1:  $T_k$ are radial.} The argument in this part is close to one used in \cite{Howd-Reese}.
We shall further divide the proof for this case in a few steps.

\noindent{\bf Step 1.} We write $z=(\alpha^2+\rho^2)e^{i\theta}$
where $|z|=\alpha^2+\rho^2$ for some $\alpha\ge 0$ and $\theta
=\arg z$.
In this step, we shall show that the distributional support of
$\what{T_0}$ is $\{\alpha, -\alpha\}.$

It follows from hypothesis (i) of the theorem that  $\Delta^k
T_0=e^{ik\theta}(\alpha^2+\rho^2)^kT_k$ for all $k\in \Z$. This
implies $$\what{T_0}=(-1)^k
e^{ik\theta}\left(\frac{\alpha^2+\rho^2}{\lambda^2+\rho^2}\right)^k\what{T_k},$$ where $\lambda$ is a dummy variable.
Let $\phi\in C^2(\what{S})^\#$ be such that $\phi(\lambda)=0$ if
$0\le \lambda<\alpha+\varepsilon$. From the description of
$C^2(\what{S})^\#$ this implies that $\phi(\lambda)=0$ if
$|\lambda|<\alpha+\varepsilon$. We claim that $\langle\what{T_0},
\phi\rangle=0$.

Let $\psi\in C^2(S)^\#$ be the pre-image of $\phi(\lambda)
(\alpha^2+\rho^2)^k/(\lambda^2+\rho^2)^k$.
Then from hypothesis (i) and (ii) we get,
\begin{equation*}
|\langle\what{T_0}, \phi\rangle|=|\langle \what{T_k},
e^{ik\theta}\left(\frac{\alpha^2+\rho^2}{\lambda^2+\rho^2}\right)^k
\phi\rangle| =\left|\left\langle T_k, \psi \right\rangle\right | \le
M \gamma(\psi) \le M \mu_{\beta,
\tau}\left[\left(\frac{\alpha^2+\rho^2}{\lambda^2+\rho^2}\right)^k
\phi\right ],
\end{equation*}
where
 $$\mu_{\beta, \tau}\left[\left(\frac{\alpha^2+\rho^2}{\lambda^2+\rho^2}\right)^k\phi\right]=
\sup_{|\lambda|>\alpha+\varepsilon}(1+|\lambda|)^\beta
|\frac{d^\tau}{d\lambda^\tau}
\left(\frac{\alpha^2+\rho^2}{\lambda^2+\rho^2}\right)^k
\phi(\lambda)|,$$ for some positive integers $\beta$ and $\tau$.
It is now easy to verify that as $k\rightarrow +\infty$,
$$\sup_{|\lambda|>\alpha+\varepsilon}(1+|\lambda|)^\beta |\frac{d^\tau}{d\lambda^\tau}
\left(\frac{\alpha^2+\rho^2}{\lambda^2+\rho^2}\right)^k
\phi(\lambda)|\rightarrow 0.$$
Again   a similar argument taking $k\rightarrow -\infty$ will show
that $\langle\what{T_0}, \phi\rangle=0$ if $\phi(\lambda)=0$ for
$|\lambda|>\alpha-\varepsilon$. This establishes the claim.
\vspace{.2in}

\noindent{\bf Step 2.} In this step we shall show that
\begin{equation}\label{generalized-eigen}(\Delta+\alpha^2+\rho^2)^{N+1}T_{0}=0, \text{ equivalently, } (\alpha^2-\lambda^2)^{N+1}\what{T_0}=0,\text{ for some }N\in \Z^+.\end{equation}

Let $g$ be an even function in $C_c^\infty(\R)$ such that $g\equiv
1$ on $[-1/2, 1/2]$ and support of $g$ is contained in $(-1, 1)$.
Let $g_r$ be defined by $g_r(\xi)=g(\xi/r)$. For a function $\phi\in
C^2(\what{S})^\#$ we define
$$H_r(\lambda)=(\alpha^2-\lambda^2)^{N+1}g_r(\alpha^2-\lambda^2)\phi(\lambda).$$ It is clear that $H_r\in C^2(\what{S})^\#$. Let $h_r\in C^2(S)^\#$ be the pre-image of $H_r$.

We fix $\varepsilon >0$ and $r(\varepsilon)=3\varepsilon(2\alpha+\varepsilon)$.  It is clear that for $\lambda\in (\alpha-\varepsilon, \alpha+\varepsilon)$ we have $g_{r(\varepsilon)}(\alpha^2-\lambda^2)\equiv 1$. Using the fact that
 the support of $\what{T_0}$ is $\{-\alpha, \alpha\}$ and $\what{T_0}$ acts only on even functions,  we get

\begin{eqnarray*}
|\langle (\alpha^2-\lambda^2)^{N+1}\what{T_0}, \phi\rangle|= |\langle\what{T_0}, (\alpha^2-\lambda^2)^{N+1}\phi\rangle|&=&|\langle\what{T_0}, H_{r(\varepsilon)}\rangle|=|\langle T_0, h_{r(\varepsilon)}\rangle|\\
&\le& M\gamma(h_{r(\varepsilon)})\le M\mu_{\beta, \tau}(H_{r(\varepsilon)}),
\end{eqnarray*}
for some positive integers $\beta, \tau$. The proof of this step
will be completed if we show that
\begin{equation}\sup_{\lambda\in \R}(1+|\lambda|)^\beta \left| \frac{d^\tau}{d\lambda^\tau}((\alpha^2-\lambda^2)^{N+1} g_{r(\varepsilon)}(\alpha^2-\lambda^2)\phi(\lambda)\right|
\label{**}
 \end{equation} converges to zero as $\varepsilon\rightarrow 0$ for a suitably large $N$.

We note that $|d^s/d\lambda^s\phi(\lambda)|\le M$ for some $M>0$ for all
$s\le \tau$, $|d^s/d\lambda^s g_{r(\varepsilon)}(\lambda)|\le B/|\lambda|^s$ for some $B>0$
for all $s\le \tau$ and the function
$G_{r(\varepsilon)}(\lambda)=g_{r(\varepsilon)}(\alpha^2-\lambda^2)$ along with its derivatives
vanishes when $|\alpha^2-\lambda^2|>{r(\varepsilon)}$.

Therefore in the expression  (\ref{**}) the supremum can be taken
over $\{\lambda\in \R\mid |\alpha^2-\lambda^2|<{r(\varepsilon)}\}$ and thus  it is
dominated by $C_\phi |\alpha^2-\rho^2|^\beta\le C_\phi {r(\varepsilon)}^\beta$ for
some $\beta>0$ and hence converges to zero as $\varepsilon\rightarrow 0$.

This shows that $|\langle (\alpha^2-\rho^2)^{N+1}\what{T_0},
\phi\rangle|=0$ which completes Step 2. \vspace{.2in}

\noindent{\bf Step 3:} We shall establish  that
$$(\Delta+\alpha^2+\rho^2)T_{0}=0.$$ From
(\ref{generalized-eigen}) it follows that $$\mathrm{Span} \{T_{0},
T_{1}, \cdots\}=\mathrm{Span} \{T_{0}, \Delta T_{0}, \cdots,
\Delta^N T_{0} \}=\mathrm{Span} \{T_{0},  T_{1}, \cdots, T_{N}\}.$$
Suppose that $(\Delta+\alpha^2+\rho^2)T_{0}\neq 0$. Let $k_0$ be the
largest positive integer such that $(\Delta+\alpha^2+\rho^2)^{k_0}
T_{0}\neq 0$. Then $k_0\le N$.

Let $g=(\Delta+\alpha^2+\rho^2)^{k_0-1} T_{0}\in \text{ Span }
\{T_{0},  T_{1}, \cdots, T_{N}\}$. We assume that $g=a_0
T_{0}+\dots+a_N T_{N}$. Then
\begin{align}(\Delta+\alpha^2+\rho^2)^2g=(\Delta+\alpha^2+\rho^2)^{k_0+1}T_{0}=0&,
(\Delta+\alpha^2+\rho^2)g=(\Delta+\alpha^2+\rho^2)^{k_0}T_{0}\neq
0. \label{*}
\end{align}
Using binomial expansion and (\ref{*}) we get for any positive
integer $k$,
\begin{eqnarray*}
\Delta^k g &=&(\Delta+(\alpha^2+\rho^2)-(\alpha^2+\rho^2))^k g\\
&=& k
(-1)^{k-1}(\alpha^2+\rho^2)^{k-1}(\Delta+(\alpha^2+\rho^2))g+(-1)^k(\alpha^2+\rho^2)^k
g.
\end{eqnarray*}
This implies for any $\psi\in C^2(S)^\#$,
\begin{equation}|\langle (\Delta+(\alpha^2+\rho^2))g, \psi\rangle|\le \frac 1k (\alpha^2+\rho^2)^{1-k}|\langle\Delta^k g, \psi\rangle|+\frac 1k (\alpha^2+\rho^2)|\langle g, \psi\rangle|.
\label{step4-1}
\end{equation}

Since,
\begin{eqnarray*}|\langle\Delta^k g , \psi\rangle | &=&|\langle\Delta^k(a_0 T_{0}+a_1T_{1}+\cdots+a_N T_{N}), \psi\rangle|\\
&=&|a_0(\alpha^2+\rho^2)^k e^{ik\theta} \langle T_{k} , \psi\rangle+\cdots+ a_N(\alpha^2+\rho^2)^k  e^{ik\theta}\langle T_{N+k} , \psi\rangle|\\
&\le& (\alpha^2+\rho^2)^k|a_0 \langle T_{k}, \psi\rangle|+\cdots+ |a_N \langle T_{N+k}, \psi\rangle|\\
&\le& M(\alpha^2+\rho^2)^k(|a_0|+\cdots+|a_{N}|)\gamma(\psi),
\end{eqnarray*}
from above and  by (\ref{step4-1}) it follows that,
$$|\langle(\Delta+(\alpha^2+\rho^2)) g, \psi\rangle|\le M\frac{(\alpha^2+\rho^2)}k(|a_0|+\cdots+|a_{N}|)\gamma(\psi)+\frac{(\alpha^2+\rho^2)}k |\langle g, \psi\rangle|$$ and the right side goes to $0$ as $k\rightarrow \infty$. Therefore by (\ref{*})
 $(\Delta+(\alpha^2+\rho^2))^{k_0} T_{0}=0$ which contradicts the assumption on $k_0$. Thus we have shown that $N=0$, i.e., $(\Delta+\alpha^2+\rho^2)T_{0}=0.$
 This completes the  proof of (a) for radial $T_k$.
\vspace{.2in}

Now we shall withdraw the assumption of radiality from the sequence
$\{T_k\}$.

\noindent {\em Case 2: $T_k$ are not necessarily radial.}  In this  proof we shall frequently use the fact that $\Delta$ commutes with the radialization   and with translations (see section 2). The following steps will lead to the proof.

\noindent{\bf Step $1'$}. Given any $k\in \Z$,  there  is a $x\in S$ such that
$\ell_x T_k$ has nonzero radial part. Indeed if $R(\ell_xT_k)=0$  for all $x\in S$, then
$\langle\ell_x T_k, h_t\rangle=0$ for all $t>0$ where $h_t$ denotes the heat kernel, which is a radial function (see \cite{ADY}). That is $T_k\ast
h_t\equiv 0$. But  $T_k\ast h_t\rightarrow T_k$ as $t\rightarrow 0$ in the sense of distribution.
Therefore $T_k=0$ and there is nothing to prove. We note that
this also shows that if for two $L^2$-tempered distribution $T$ and
$T'$, $R(\ell_x T)=R(\ell_xT')$ for all $x\in S$, then $T=T'$.
\vspace{.1in}

\noindent{\bf Step $2'$}. We claim that if  $R(\ell_y T_0)\neq 0$ for
some $y\in S$, then $R(\ell_y T_k)\neq 0$ for all $k\in \Z$. It is
enough to  show that if   $R(\ell_y T_0)\neq 0$ then $R(\ell_y T_{-1})\neq 0$ and $R(\ell_y T_1)\neq
0$. Indeed if $R(\ell_y T_{-1})= 0$ then
$\Delta R(\ell_y T_{-1})= 0$ which implies $R(\ell_y T_0)= 0$ as
$\Delta T_{-1}=zT_0$ for $z\neq 0$.

If  $R(\ell_y T_1)= 0$, then $\langle
\ell_y T_1, \psi\rangle=0$ for all $\psi\in C^2(S)^\#$. That is
$\langle \ell_y \Delta T_0, \psi\rangle=0$
and hence $\langle \ell_y T_0, \Delta
\psi\rangle=0$. Since  for any $\phi\in C^2(S)^\#$,
$\what{\phi}(\lambda)(\lambda^2+\rho^2)^{-1}\in C^2(\what{S})^\#$ (see section 2),
$\phi$ can be written as $\phi=\Delta\psi$ for some $\psi\in
C^2(S)^\#$. Thus  $\langle \ell_y T_0,
\phi\rangle=0$ for any $\phi\in C^2(S)^\#$, i.e.  $R(\ell_y T_0)=0$. \vspace{.1in}

\noindent{\bf Step $3'$}. In this step we shall show that for any $y\in S$, the sequence $\{R\ell_y T_k\}$ of radial distributions satisfies the hypothesis of the theorem.
Since $\Delta$ commutes with radialization and translations it follows from the hypothesis
$\Delta T_k=z T_{k+1}$ that $\Delta R(\ell_y T_k)=z R(\ell_y T_{k+1})$.

It now remains to show that for the seminorm $\gamma$ of $C^2(S)$ in the hypothesis of the theorem  and $\psi_1\in C^2(S)^\#$, $|\langle R(\ell_y T_k), \psi_1\rangle|\le C_y M\gamma(\psi_1)$.
Using the estimate  $\phi_0(x)^{-1}(1+|x|)\asymp
e^{\rho|x|}$ we have for any seminorm $\gamma$ of $C^2(S)$ and $\psi\in C^2(S)$,
\begin{eqnarray*}
\gamma(\ell_y\psi)&=&\sup_{x\in S}
|D\psi(y^{-1}x)|\phi_0(x)^{-1}(1+|x|)^L\\&=&
\sup_{x\in S} |D\psi(x)|\phi_0(yx)^{-1}(1+|yx|)^L\\
&\asymp& \sup_{x\in S} |D\psi(x)|e^{\rho|yx|}(1+|yx|)^{L-1}\\
&\le& e^{\rho|y|}(1+|y|)^{L-1}\sup_{x\in S} |D\psi(x)|e^{\rho|x|}(1+|x|)^{L-1}\\
&\asymp&e^{\rho|y|}(1+|y|)^{L-1}\sup_{x\in S} |D\psi(x)|\phi_0(x)^{-1}(1+|x|)^{L}\\
&=&C_y\gamma(\psi),
\end{eqnarray*}
where the constant $C_y$ depends only on $y\in S$.
Since  $|\langle T_k, \psi\rangle|\le
M\gamma(\psi)$ for any $\psi\in C^2(S)$, it follows that  for  $\psi_1\in
C^2(S)^\#$,
\begin{eqnarray*}
|\langle R(\ell_y T_k), \psi_1\rangle|=|\langle\ell_yT_k,
\psi_1\rangle|=|\langle T_k, \ell_{y^{-1}}\psi_1\rangle|\le
M\gamma(\ell_{y^{-1}}\psi_1)\le C_y M \gamma(\psi_1).
\end{eqnarray*}

\noindent{\bf Step $4'$}. From the previous step and the result proved for radial distributions we conclude that
$$\Delta R(\ell_y(T_0))=-|z| R(\ell_y(T_0))\text{  for all } y\in S.$$
(Note that if  $R(\ell_y(T_0))=0$ for some $y\in S$, then  the identity    $\Delta R(\ell_y(T_0))=-|z| R(\ell_y(T_0))$ is trivial.)
Again appealing to the fact that
$\Delta$ commutes with translations and radialization we have
$R(\ell_y(\Delta T_0))=R(\ell_y(-|z| T_0))$  for all $y\in S$. By Step $1'$
 this implies $\Delta T_0=-|z|T_0$ which is the assertion of
(a).

To prove part (b) of the theorem we note again that for an $L^2$-tempered
distribution $T$,  $T=0$ is equivalent to  $R(\ell_xT)=
0$ for all $x\in S$ and  that $\Delta T=\alpha T$ implies
$\Delta R(\ell_xT)= \alpha R(\ell_xT)$ for all $x\in S$. Therefore
it is enough to  assume that $T_k$ are radial. We can proceed as in
the proof of (a) (for radial distributions). For $\phi\in
C^2(\what{S})^\#$ and suitable seminorms $\gamma$ and $\mu$ of $C^2(S)$ and $C^2(\what{S})^\#$ respectively,
\begin{eqnarray*}
|\langle \what{T_0}, \phi\rangle|=\left|\left\langle \what{T_k}, \frac{z^k}{(\lambda^2+\rho^2)^k} \phi\right\rangle\right|
=\left|\left\langle T_k, \left(\frac{z^k}{(\lambda^2+\rho^2)^k} \phi\right)^\vee\right\rangle\right|
&\le& M \gamma\left[\left(\frac{z^k}{(\lambda^2+\rho^2)^k} \phi\right)^\vee\right]\\
&\le& M \mu\left[\frac{z^k}{(\lambda^2+\rho^2)^k} \phi\right].
\end{eqnarray*}

Since $|z|<|\lambda^2+\rho^2|$ for all $\lambda\in S_2=\R$, the
right side goes to $0$ as $k\rightarrow \infty$ and  we conclude
that $\langle T_0, \psi\rangle=0$ for all $\psi\in C^2(S)^\#$.
\end{proof}

\begin{remark} For the symmetric spaces $X$, since $\Delta$ preserves the $K$-types and since  $L^p$-Schwartz space isomorphism theorems are available for functions on $X$ with a single left $K$-type (see e.g. \cite{J-S}), we can decompose  the  distribution $T_k$ in $K$-isotypic components and work restricting to one isotypic component at a time exactly the same way we worked with  the radial functions in the previous theorem. This is an alternative way to prove Theorem \ref{thm-L^2-temp}.   However, this method has no relevance for a general $NA$ group, where the rotation group $K$ is not available.
\label{alternative-for-X}
\end{remark}

\subsection{Result for $L^p$-tempered distributions for $1<p<2$}
A distinguishing feature  of the corresponding theorem for the
$L^p$-tempered distributions is that here a one-sided sequence of
functions will be enough.  Handling this case becomes technically
more challenging  because  the $L^p$-spectrum of  $\Delta$ is a parabolic region in
the complex plane. Thus  one has to enter into the realm of
holomorphic functions and the main technique in the proof of the previous theorem, namely
the use of functions whose Fourier transforms  are ``supported outside a set of positive measure'' will not work.

\begin{theorem}\label{thm-Lp-temp}
For $1<p<2$, let  $\{T_k\}_{k\in \Z^+}$ be an infinite sequence of $L^p$-tempered
distributions on $S$ satisfying

{\em (i)} $\Delta T_k=z T_{k+1}$  for some  nonzero  $z\in \C$ and

{\em (ii)} for all $\psi\in C^p(S)$, let $|\langle T_k,
\psi\rangle|\le M \gamma(\psi)$ for some fixed seminorm $\gamma$
of $C^p(S)$ and $M>0$. Then
\begin{enumerate}
\item[(a)] $|z|= 4\rho^2/pp'$ implies that  $\Delta T_0=-|z| T_0$,
\item[(b)] $|z|<4\rho^2/pp'$ implies that $T_k=0$ for all $k\in \Z$ and
\item[(c)] there are solution which are not eigendistributions  when $|z|>4\rho^2/pp'$.
\end{enumerate}
\end{theorem}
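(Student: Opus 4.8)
The plan is to mirror the architecture of the proof of Theorem~\ref{thm-L^2-temp}. First I would settle (a) and (b) for \emph{radial} $L^p$-tempered distributions, where the spherical Fourier transform lives on the strip $S_p$, and then strip off radiality exactly as in Case~2 there. Write $\Lambda(\lambda)=-(\lambda^2+\rho^2)$, so $\what{\Delta T}=\Lambda(\lambda)\,\what{T}$; since $\gamma_p<1$ the zeros $\pm i\rho$ of $\Lambda$ lie outside $S_p$, so $\Lambda$ is non-vanishing there, and hypothesis (i) rewrites as $\what{T_0}=(z/\Lambda(\lambda))^{k}\,\what{T_k}$ as functionals on $C^p(\what{S})^\#$ (multiplication by the even, bounded, holomorphic symbol $(z/\Lambda)^k$ preserves that space). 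The one geometric fact needed is the elementary computation $\min_{\lambda\in S_p}|\Lambda(\lambda)|=4\rho^2/pp'$, attained exactly at the two boundary points $\pm i\gamma_p\rho$; consequently $|z/\Lambda(\lambda)|\le 1$ on $S_p$, with equality only at $\pm i\gamma_p\rho$, and $|z/\Lambda(\lambda)|\to 0$ as $|\lambda|\to\infty$ in $S_p$.

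Part (b) is then easy: if $|z|<4\rho^2/pp'$ then $|z/\Lambda(\lambda)|\le |z|/(4\rho^2/pp')<1$ uniformly on $S_p$, so $(z/\Lambda)^k\to 0$ geometrically, and using $\tfrac{d}{d\lambda}(z/\Lambda)^k=-k(z/\Lambda)^k(\Lambda'/\Lambda)$ with $\Lambda'/\Lambda$ and its derivatives bounded on $S_p$ one gets $\mu\big((z/\Lambda)^k\phi\big)\to 0$ for every $\phi\in C^p(\what{S})^\#$ and every continuous seminorm $\mu$ of $C^p(\what{S})^\#$. Pairing against $\what{T_k}$ and feeding hypothesis (ii) through the topological isomorphism $C^p(S)^\#\cong C^p(\what{S})^\#$ gives $\langle\what{T_0},\phi\rangle=0$, hence $T_0=0$ for radial $T_k$; applying this to the shifted sequences $\{T_{k+j}\}_{k\ge 0}$ and undoing the radialization (via $T=0\iff R(\ell_yT)=0$ for all $y$, as in Step~$1'$ of Theorem~\ref{thm-L^2-temp}) gives $T_k=0$ for all $k$.

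For part (a), assume $|z|=4\rho^2/pp'$ and the $T_k$ radial. Put $P(\lambda)=\lambda^2+\gamma_p^2\rho^2=(\lambda-i\gamma_p\rho)(\lambda+i\gamma_p\rho)$, which vanishes to first order exactly at the two critical points and, since $\rho^2-4\rho^2/pp'=\gamma_p^2\rho^2$, corresponds under the Fourier transform to $-(\Delta+4\rho^2/pp')$. The heart of the matter is the claim: for every continuous seminorm $\nu_{l,m}$ of $C^p(\what{S})^\#$ there is an $N$ (large, depending on $l$) such that $\nu_{l,m}\big((z/\Lambda)^kP^{N+1}\phi\big)\to 0$ as $k\to\infty$ for all $\phi$. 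To prove it I would use $|\tfrac{d^j}{d\lambda^j}(z/\Lambda)^k|\le C_jk^j|z/\Lambda|^k$ on $S_p$ together with $(P^{N+1})^{(i)}=P^{N+1-i}Q_i$ ($\deg Q_i=i$): away from $\pm i\gamma_p\rho$ the factor $|z/\Lambda|^k$ decays geometrically (and the rapid decay of $\phi$ absorbs the region $|\lambda|\to\infty$), while near $i\gamma_p\rho$, with $w=\lambda-i\gamma_p\rho$, one has $|z/\Lambda(\lambda)|^k\le(1+c|w|^2)^{-k/2}$ and $|P(\lambda)|\asymp|w|$, so each Leibniz term is $\lesssim k^{a}(1+c|w|^2)^{-k/2}|w|^{\,N+1-b}$ with $a+b\le l$, whose supremum over $w$ is $O\!\big(k^{\,l-(N+1)/2}\big)\to 0$ once $N$ is large; the point $-i\gamma_p\rho$ is handled symmetrically. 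Pairing against $\what{T_k}$ and using hypothesis (ii) then yields $\langle\what{T_0},P^{N+1}\phi\rangle=0$ for all $\phi$, i.e.\ $(\Delta+4\rho^2/pp')^{N+1}T_0=0$. From here I would repeat Step~3 of Theorem~\ref{thm-L^2-temp} verbatim: this identity makes $\mathrm{Span}\{T_k:k\ge 0\}=\mathrm{Span}\{T_0,\dots,T_N\}$ a finite-dimensional $\Delta$-invariant space, and assuming $(\Delta+4\rho^2/pp')T_0\ne 0$ one picks the largest $k_0$ with $g:=(\Delta+4\rho^2/pp')^{k_0-1}T_0$ obeying $(\Delta+4\rho^2/pp')g\ne 0$, $(\Delta+4\rho^2/pp')^2g=0$, expands $\Delta^kg=(-|z|)^kg+k(-|z|)^{k-1}(\Delta+|z|)g$, writes $g=\sum_ja_jT_j$, and bounds $\Delta^kg=z^k\sum_ja_jT_{j+k}$ by hypothesis (ii) to get $|\langle(\Delta+|z|)g,\psi\rangle|=O(1/k)\gamma(\psi)\to 0$, a contradiction; so $\Delta T_0=-|z|T_0$ for radial $T_k$. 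Finally, since $\Delta$ commutes with radialization and translation and $\gamma(\ell_{y^{-1}}\psi)\le C_y\gamma(\psi)$ (the computation of Step~$3'$ of Theorem~\ref{thm-L^2-temp}, now with the weight $\phi_0^{-2/p}(1+|x|)^r$), applying the radial case to $\{R(\ell_yT_k)\}_{k\ge 0}$ for each $y\in S$ gives $R(\ell_y(\Delta T_0+|z|T_0))=0$ for all $y$, hence $\Delta T_0=-|z|T_0$.

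For part (c), suppose $|z|>4\rho^2/pp'$. Then the vertex $-4\rho^2/pp'$ of the parabolic region $\sigma_p=\Lambda(S_p)$ (see \eqref{spectrum}) lies strictly inside the disc $\{|w|<|z|\}$ while $\sigma_p$ is unbounded, so the circle $\{|w|=|z|\}$ crosses $\sigma_p$; I would pick $\lambda_1,\lambda_2\in S_p$ with $|\Lambda(\lambda_1)|=|\Lambda(\lambda_2)|=|z|$ and $\Lambda(\lambda_1)\ne\Lambda(\lambda_2)$, and set $T_k=(\Lambda(\lambda_1)/z)^k\phi_{\lambda_1}+(\Lambda(\lambda_2)/z)^k\phi_{\lambda_2}$. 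Then $\Delta T_k=zT_{k+1}$ since $\Delta\phi_{\lambda_j}=\Lambda(\lambda_j)\phi_{\lambda_j}$, the coefficients are unimodular, and each $\phi_{\lambda_j}$ is an $L^p$-tempered distribution by Lemma~\ref{Mp-Apq-dist}~(b), so hypothesis (ii) holds with a suitable seminorm $\gamma$ of $C^p(S)$; but $T_0=\phi_{\lambda_1}+\phi_{\lambda_2}$ is not an eigendistribution of $\Delta$ because $\Lambda(\lambda_1)\ne\Lambda(\lambda_2)$. The step I expect to be hardest is the key estimate in (a): because $|z/\Lambda|$ attains its maximum value $1$ precisely at the two boundary points $\pm i\gamma_p\rho$, there is no uniform geometric decay of $(z/\Lambda)^k$---only Gaussian decay of width $\sim k^{-1/2}$ near those points---and this has to be made to outrun the polynomial-in-$k$ growth produced by differentiating $(z/\Lambda)^k$; absorbing this is exactly what forces the high power $P^{N+1}$ and thereby the generalized-eigendistribution detour, the ``Fourier transform supported off a set of positive measure'' device of Theorem~\ref{thm-L^2-temp} being unavailable on a strip of holomorphy.
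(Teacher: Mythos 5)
Your proposal is correct and follows the same architecture as the paper's proof: reduce to the radial case via radialization and translation as in Theorem~\ref{thm-L^2-temp}, show the generalized eigenequation $(\Delta+4\rho^2/pp')^{N+1}T_0=0$ for radial $T_k$ by estimating a seminorm of $(z/\Lambda)^kP^{N+1}\phi$ (your $P(\lambda)=\lambda^2+\gamma_p^2\rho^2$ is exactly the paper's $-(\alpha^2-\lambda^2)$ with $\alpha=i\gamma_p\rho$), then invoke the binomial argument of Step~3 of Theorem~\ref{thm-L^2-temp} to upgrade to a true eigendistribution, and finish (b) by uniform geometric decay of $|z/\Lambda|^k$ and (c) by a two-term superposition $\phi_{\lambda_1}+\phi_{\lambda_2}$ with unimodular $k$-dependent coefficients as in subsection~3.2. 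The one place your route diverges from the paper is in how the key estimate for part (a) is organized near the boundary critical points $\pm i\gamma_p\rho$. The paper splits $S_p^+$ into the $k$-dependent window $V_k$ of radius $k^{-1/4}$ about $i\gamma_p\rho$ and its complement, using the sub-geometric bound $(1+c/\sqrt{k})^{-k}\approx e^{-c\sqrt{k}}$ on $V_k^c$ and the crude bound $|z/\Lambda|^k\le 1$ inside $V_k$ compensated by smallness of $|\alpha^2-\lambda^2|^{N+1-\tau}$; this leads them to the choice $N=6\tau+1$. You instead take a \emph{fixed} small neighbourhood of the critical points and observe the sharper local estimate $|z/\Lambda(\lambda)|^k\le(1+c|w|^2)^{-k/2}$ (which indeed follows since $|\Lambda(\lambda)|^2\ge|z|^2\bigl(1+c|w|^2\bigr)$ on $S_p^+$ for $w=\lambda-i\gamma_p\rho$), and then optimize $k^a(1+c|w|^2)^{-k/2}|w|^{N+1-b}$ over $|w|$ directly to get $O(k^{\tau-(N+1)/2})$, so $N\ge 2\tau$ suffices. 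This buys a cleaner, $k$-independent decomposition of the strip and a smaller working value of $N$; the paper's version is somewhat coarser but arrives at the same $(\alpha^2-\lambda^2)^{N+1}\what{T_0}=0$. Everything downstream — the passage from generalized eigenequation to eigenequation, the reduction from general to radial $T_k$ via $R(\ell_y\cdot)$ and the seminorm estimate $\gamma(\ell_{y^{-1}}\psi)\le C_y\gamma(\psi)$, and the verification in (c) that $T_0=\phi_{\lambda_1}+\phi_{\lambda_2}$ with $\Lambda(\lambda_1)\ne\Lambda(\lambda_2)$ is not an eigendistribution while each $\phi_{\lambda_j}$ is $L^p$-tempered by Lemma~\ref{Mp-Apq-dist}(b) — is exactly as in the paper.
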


\begin{proof}
(a) It suffices to prove the theorem  with the extra assumption that  $T_k$ are radial
as it is possible to extend the theorem from the radial to the general case using  the  argument given in the proof of Theorem \ref{thm-L^2-temp}

For notational convenience in this proof let us write $\alpha$ for
$i\gamma_p\rho$. Then $|z|=\alpha^2+\rho^2$. We shall show that  $(\alpha^2-\lambda^2)^{N+1} \what{T_0}=0$ for a fixed $N\in \Z^+$.
Since $\Delta^k T_0=z^k T_k$ and  for any $\lambda\in S_p$,
$\lambda^2+\rho^2\neq 0$, we have
$\what{T_0}=z^k (-1)^k(\lambda^2+\rho^2)^{-k}
\what{T_k}$  and hence for a fixed $\phi\in C^p(\what{S})^\#$,
\begin{eqnarray*}
|\langle (\alpha^2-\lambda^2)^{N+1} \what{T_0}, \phi\rangle|&=&|\langle\what{T_0}, (\alpha^2-\lambda^2)^{N+1}\phi\rangle|\\
&=&|\langle \what{T_k}, \left(\frac{\alpha^2+\rho^2}{\lambda^2+\rho^2}\right)^k (\alpha^2-\lambda^2)^{N+1}\phi\rangle|\\
&=&\left|\left\langle T_k, \left(\left(\frac{\alpha^2+\rho^2}{\lambda^2+\rho^2}\right)^k (\alpha^2-\lambda^2)^{N+1}\phi\right)^\vee\right\rangle\right|\\
&\le& M  \gamma\left[\left(\left(\frac{\alpha^2+\rho^2}{\lambda^2+\rho^2}\right)^k (\alpha^2-\lambda^2)^{N+1}\phi\right)^\vee\right]\\
&\le& M\, \mu\left[\left(\frac{\alpha^2+\rho^2}{\lambda^2+\rho^2}\right)^k
(\alpha^2-\lambda^2)^{N+1}\phi\right],
\end{eqnarray*}
where $\mu$ is given by
$$\mu\left[\left(\frac{\alpha^2+\rho^2}{\lambda^2+\rho^2}\right)^k (\alpha^2-\lambda^2)^{N+1}\phi(\lambda)\right]
=\sup_{\lambda\in S_p}  \left|\frac {d^\tau}{d\lambda^\tau}
P(\lambda) \left(\frac{\alpha^2+\rho^2}{\lambda^2+\rho^2}\right)^k
(\alpha^2-\lambda^2)^{N+1}\phi(\lambda)\right|,$$
for some even polynomial $P(\lambda)$ and derivative of even order
$\tau$. We shall temporarily use the notation $$F^k=\left|\frac
{d^\tau}{d\lambda^\tau} P(\lambda)
\left(\frac{\alpha^2+\rho^2}{\lambda^2+\rho^2}\right)^k
(\alpha^2-\lambda^2)^{N+1}\phi\right|.$$
We shall use the notation  $A_\tau, B_\tau, C_\tau, C_\tau'$ etc. for positive constants which depend only on $\tau$.

Our  aim is to show that $\sup_{\lambda\in S_p}F^k\rightarrow 0$ as $k\rightarrow \infty$.
We note that in the definition of $\mu$ we can take the supremum
only on $S_p^+=\{\lambda\in S_p\mid \Im \lambda\ge 0\}$ as $\phi$ is
even being the image of a radial function.
We also need the following observations: For $\lambda\in S_p^+$,
\begin{equation}\label{lp-dist-1}
\left|\frac{\alpha^2+\rho^2}{\lambda^2+\rho^2}\right|\le 1, \,\, \left|\frac{\alpha^2-\lambda^2}{\lambda^2+\rho^2}\right|
=\left|\frac{\alpha^2+\rho^2-(\lambda^2+\rho^2)}{\lambda^2+\rho^2}\right|<1.
\end{equation}

For $\lambda\in S_p^+$ we write  $\lambda=a\rho+ib\rho$. Then  $a\in \R$ and $0\le b \le \gamma_p$ and hence,
\begin{equation}
\left|\frac{\alpha^2+\rho^2}{\lambda^2+\rho^2}\right|
\le \frac{1-\gamma_p^2}{1+a^2-b^2}.
\label{1-p}
\end{equation}
We  fix $N =6\tau+1$.
Let $$A_\tau=\max_{i=1}^\tau\{\sup_{\lambda\in S_p^+} |\frac{d^i}{d\lambda^i}(\alpha^2-\lambda^2)^{N+1}P(\lambda)\phi(\lambda)|\mid 0\le i\le \tau \}.$$

An explicit computation shows that for $\lambda\in S_p^+$.
\begin{equation}\label{lp-dist-2}|\frac{d^i}{d\lambda^i}\left(\frac{\alpha^2+\rho^2}{\lambda^2+\rho^2}\right)^{k}|\le B_\tau \left |\frac{\alpha^2+\rho^2}{\lambda^2+\rho^2}\right|^{k} k(k+1)\dots (k+i-1),\ \ \ 0\le i\le \tau.
\end{equation}

Therefore
\begin{eqnarray}&&\left|\frac{d^\tau}{d\lambda^\tau}\left(\frac{\alpha^2+\rho^2}{\lambda^2+\rho^2}\right)^{k+N+1}
(\alpha^2-\lambda^2)^{N+1}P(\lambda)\phi(\lambda)\right|\nonumber\\
&&\le\sum_{i=0}^\tau {\tau\choose i} B_\tau \left|\frac{\alpha^2+\rho^2}{\lambda^2+\rho^2}\right|^{k+N+1}
(k+N+1)\dots (k+N+i) \left|\frac{d^{\tau-i}}{d\lambda^{\tau-i}}(\alpha^2-\lambda^2)^{N+1}P(\lambda)\phi(\lambda)\right| \label{explicit}\\
&&\le A_\tau C_\tau k^\tau \left|\frac{\alpha^2+\rho^2}{\lambda^2+\rho^2}\right|^{k}\left| \frac{\alpha^2-\lambda^2}{\lambda^2+\rho^2}\right|^{N+1}\nonumber\\
&&\le A_\tau C_\tau k^\tau \left|\frac{\alpha^2+\rho^2}{\lambda^2+\rho^2}\right|^{k}. \label{explicit2}
\end{eqnarray}

For any $k=1,2,\dots $ we define a bounded region, $$V_k=\{z\in S_p^+ \mid |\Re z|< k^{-1/4} \text{ and } \gamma_p\rho-\Im z< k^{-1/4}\}.$$

Our strategy is to  show that as $k\rightarrow \infty$,

(i) $\sup_{\lambda\in V_k}F^{k+N+1}\rightarrow 0$
(ii) $\sup_{\lambda\in V_k^c}F^{k+N+1}\rightarrow 0$  uniformly on $k$.

We shall first deal with (ii) above.

We claim that
for all $\lambda\in V_k^c$ and large $k$,
\begin{equation}\left|\frac{d^\tau}{d\lambda^\tau}\left(\frac{\alpha^2+\rho^2}{\lambda^2+\rho^2}\right)^{k+N+1}
(\alpha^2-\lambda^2)^{N+1}P(\lambda)\phi(\lambda)\right|
\le C_\tau' \left(1+\frac c{\sqrt{k}}\right)^{-k} k^\tau,
\label{basic-step-p}
\end{equation} for some constant $c>0$.

In view of  (\ref{explicit2}) it suffices to show that for some constant $c>0$,
\begin{eqnarray*}
\left|\frac{\alpha^2+\rho^2}{\lambda^2+\rho^2}\right|^k\le
\left(1+\frac{c}{\sqrt{k}}
\right)^{-k}.
\end{eqnarray*}
Since $\lambda\in V_k^c$, there are two possibilities: $|a|\ge k^{-1/4}/\rho$ or $\gamma_p-b\ge k^{-1/4}$.

\noindent{\bf Case 1:} $|a|\ge k^{-1/4}/\rho$. Using  $-b^2>-\gamma_p^2$ and (\ref{1-p}),  we get,
\begin{eqnarray*}
\left|\frac{\alpha^2+\rho^2}{\lambda^2+\rho^2}\right|\le
\frac{1-\gamma_p^2}{1+a^2-b^2}\le
\frac{1-\gamma_p^2}{1-\gamma_p^2+a^2}=\left(1+\frac{a^2}{1-\gamma_p^2}\right)^{-1}
\le
\left(1+\frac{k^{-1/2}/\rho^2}{1-\gamma_p^2}\right)^{-1}=\left(1+\frac{c_1}{\sqrt{k}}\right)^{-1},
\end{eqnarray*}
where $c_1=(\rho^{2}-\gamma_p^2\rho^2)^{-1}$.

\noindent{\bf Case 2:} $\gamma_p-b\ge k^{-1/4}$. Then
$\gamma_p^2-b^2=(\gamma_p+b)(\gamma_p-b)\ge \gamma_p  k^{-1/4}$.
Hence
\begin{eqnarray*}
\frac{1-\gamma_p^2}{1+a^2-b^2}\le \frac{1-\gamma_p^2}{1-b^2}=\frac{1-\gamma_p^2}{1-\gamma_p^2+(\gamma_p^2-b^2)}
\le \left(1+\frac{c_2}{k^{1/4}}\right)^{-1},
\end{eqnarray*} where $c_2=\gamma_p/(1-\gamma_p^2)$.

Choosing $c=\min\{c_1, c_2\}$ from the two cases above, we get that for all $\lambda\in V_k^c$,
$$\left|\frac{\alpha^2+\rho^2}{\lambda^2+\rho^2}\right|^k\le \left(1+\frac{c}{\sqrt{k}}\right)^{-k}.$$

Thus   (\ref{basic-step-p}) is proved from which the assertion (ii) follows easily.

To prove (i) we take  $\lambda\in V_k$. Then $|\Re(\alpha-\lambda)|=|\Re \lambda|<k^{-1/4}$ and $|\Im (\alpha-\lambda)|=|\gamma_p\rho-b\rho|<k^{-1/4}$ and hence   $|\alpha-\lambda|< \sqrt{2}k^{-1/4}$. Using $|a\rho|<k^{-1/4}$ and
$0\le b\rho<\gamma_p\rho$ we get,
$$|\alpha+\lambda|^2=|a\rho+i(b\rho+\gamma_p\rho)|^2=a^2\rho^2+b^2\rho^2+\gamma_p^2\rho^2+2b\rho\gamma_p\rho
<1/\sqrt{k}+ 4\gamma_p^2\rho^2.$$
Therefore for $\lambda\in V_k$,
$|\alpha^2-\lambda^2|=|\alpha-\lambda||\alpha+\lambda|<(1/\sqrt{k}+4\gamma_p^2\rho^2)^{1/2}\sqrt{2}k^{-1/4}$  and hence  $$|\alpha^2-\lambda^2|^{4\tau} (k+N+1)(k+N+2)\dots (k+N+\tau)\le C_\tau.$$

From (\ref{explicit}) we see that   every term in $F^{k+N+1}$ contains the factor
$(\alpha^2-\lambda^2)^{N+1-\tau}=(\alpha^2-\lambda^2)^{5\tau+2}$.   The inequalities (\ref{lp-dist-1}), (\ref{explicit}) and   $|\alpha^2-\lambda^2|^{\tau+1}\le C k^{-(\tau+1)/4}$, now implies that  $F_{k+N+1}\le C k^{-(\tau+1)/4}$ where $C$ is independent of $k$.
Thus assertion (i) is proved.  It follows from (i) and (ii) that  $\sup_{\lambda\in S_p}F^k\rightarrow 0$ as $k\rightarrow \infty$ and hence
$(\alpha^2-\lambda^2)^{N+1} \what{T_0}\equiv 0$, when $T_k$ are radial. Now the argument given in Step 3 of the previous theorem will lead to the theorem restricted to   radial distributions and hence for the general case.

The proof of part (b) is also similar to that of part (b) of Theorem
\ref{thm-L^2-temp}. By the argument given there it is enough to
consider that $T_k$ are radial. If we proceed through the steps of
the proof of (a) we get, for $\phi\in C^p(\what{S})^\#$, $|\langle
\what{T_0}, \phi\rangle|\le \mu [z^k(\lambda^2+\rho^2)^{-k} \phi]$, for some
seminorm $\mu$ of $C^p(\what{S})^\#$.
Since $|z|<4\rho^2/pp' \le |\lambda^2+\rho^2|$  for all $\lambda\in
S_p$,  the right side goes to zero as $k\rightarrow \infty$. Hence
$\langle T_0, \psi\rangle=0$ for all $\psi\in C^p(S)^\#$.

(c) If $|z|> 4\rho^2/pp'$ then one can find $\theta_1$ and $\theta_2$ such that $|z|e^{i \theta_1}$ and
$|z|e^{i \theta_2}$ are in the interior of the $L^p$-spectrum. The conclusion now follows by arguing as in subsection 3.2.
\end{proof}

\section {Proof of the Theorems \ref{thm-X-2}, \ref{thm-X-p} and their analogues}
\subsection{Functions which are tempered distributions}
It follows easily from the definition of the $L^1$-Schwartz space
$C^1(S)$ that $C^1(S)\subset L^1(S)$ and hence all
$L^{\infty}$-functions on $S$ define $L^1$-tempered distribution,
i.e., $L^{\infty}(S)\subset C^1(S)'$. We shall see that the size estimates $L^{p', \infty}$,
 $M_{p'}$ and  $\mathcal A_{p', q}$    define $L^p$-tempered distributions for $1<p\le 2$.

\begin{lemma}
\label{Mp-Apq-dist}
 Let $f$ be  a measurable function on $S$ and  $1<p\le 2, 1<q<\infty$.

\noindent{\rm (a)}
There exists a seminorm
$\gamma$ of $C^p(S)$ such that for all $\phi\in C^p(S)$ and suitable functions $f$,
$$(\mathrm{i})\ \ |\langle f, \phi\rangle|\le C \|f\|_{p', \infty} \gamma(\phi), \ (\mathrm{ii})\  |\langle f, \phi\rangle|\le C M_{p'}(f) \gamma(\phi),\
(\mathrm{iii}) \ |\langle f, \phi\rangle|\le C \mathcal A_{p',q}(f) \gamma(\phi).$$

\noindent{\rm (b)} If $\lambda\in S_p$ then for each $n\in N$, $x\mapsto \wp_\lambda(x,n)$ and for each $k\in K$, $x\mapsto e^{(i\lambda+\rho)H(x^{-1}k)}$
are $L^p$-tempered distributions.
\end{lemma}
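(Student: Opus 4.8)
The plan is to reduce every assertion to pairing against the radial weight
$w_N(x):=(1+|x|)^{2/p-N}e^{-(2\rho/p)|x|}$, which by $\phi_0(x)\asymp(1+|x|)e^{-\rho|x|}$ is comparable to $\phi_0(x)^{2/p}(1+|x|)^{-N}$; hence, by the very definition of the $C^p(S)$--seminorms, every $\phi\in C^p(S)$ satisfies $|\phi(x)|\le C\,\gamma_{N,\mathrm{Id}}(\phi)\,w_N(x)$ (with $\mathrm{Id}$ the identity operator). Using \eqref{DR-polar}, which gives $J(r)\asymp e^{2\rho r}$ for $r\ge 1$ (and $J(r)\asymp r^{\dim S-1}$ near $0$), a short computation of the distribution function of $w_N$ yields $w_N^{\ast}(t)\asymp t^{-1/p}(\log t)^{2/p-N}$ as $t\to\infty$, so $\|w_N\|_{p,1}\asymp\int^{\infty}t^{-1}(\log t)^{2/p-N}\,dt<\infty$ whenever $N>1+2/p$. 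We fix once and for all a sufficiently large integer $N$ (depending only on $p$) and put $\gamma:=C\,\gamma_{N,\mathrm{Id}}$; recall that the pairing in question is $\langle f,\phi\rangle=\int_S f\phi$, so part of the claim is the absolute convergence of this integral.

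\textbf{Part (a).} For (i), $|\langle f,\phi\rangle|\le\int_S|f|\,|\phi|\le C\|f\|_{p',\infty}\|\phi\|_{p,1}$ by the Lorentz--H\"older inequality (the dual of $L^{p,1}(S)$ being $L^{p',\infty}(S)$), and $\|\phi\|_{p,1}\le\gamma_{N,\mathrm{Id}}(\phi)\,\|w_N\|_{p,1}$. For (iii), first apply H\"older in the variable $\omega\in\partial B(\mathfrak s)$ with exponents $q,q'$ to get $\int_{\partial B(\mathfrak s)}|f(r\omega)|\,|\phi(r\omega)|\,d\omega\le\mathcal A_q(f)(r)\,\mathcal A_{q'}(\phi)(r)$; since $\mathcal A_{q'}(\phi)(r)\le C\gamma_{N,\mathrm{Id}}(\phi)\,w_N(r)$, integrating in $r$ against $J(r)$ and using that $\mathcal A_q(f)$ and $w_N$ are both radial gives $|\langle f,\phi\rangle|\le C\gamma_{N,\mathrm{Id}}(\phi)\int_S\mathcal A_q(f)(x)\,w_N(x)\,dx\le C\gamma_{N,\mathrm{Id}}(\phi)\,\mathcal A_{p',q}(f)\,\|w_N\|_{p,1}$, again by Lorentz--H\"older. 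For (ii), write $w_N(|x|)=\int_{|x|}^{\infty}(-w_N'(s))\,ds$ and apply Tonelli: $\int_S|f(x)|\,w_N(|x|)\,dx=\int_0^{\infty}(-w_N'(s))\bigl(\int_{B(0,s)}|f|\bigr)\,ds\le\int_0^{\infty}(-w_N'(s))\,|B(0,s)|^{1/p}\,\Phi(s)^{1/p'}\,ds$, where $\Phi(s)=\int_{B(0,s)}|f|^{p'}$. Since $M_{p'}(f)<\infty$ forces $\Phi(s)<\infty$ for all $s$ and $\Phi(s)/s\to M_{p'}(f)^{p'}$, there is $C_f<\infty$ with $\Phi(s)\le C_f(1+s)$ for all $s>0$, while $(-w_N'(s))\,|B(0,s)|^{1/p}\le C(1+s)^{2/p-N}$; hence the last integral is $\le CC_f\int_0^{\infty}(1+s)^{2/p+1/p'-N}\,ds<\infty$ once $N>2+1/p$. (Here the constant genuinely depends on $f$, through the rate at which the $\limsup$ defining $M_{p'}(f)$ stabilises --- as it must, since $M_{p'}$ vanishes on compactly supported functions --- but this already exhibits $f$ as an $L^p$--tempered distribution, which is what is needed.)

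\textbf{Part (b).} The key observations are that $|\wp_\lambda(x,n)|=\wp_{i\Im\lambda}(x,n)$ and $|e^{-(i\lambda+\rho)H(x^{-1}k)}|=e^{-(i(i\Im\lambda)+\rho)H(x^{-1}k)}$ are \emph{positive} eigenfunctions of $\Delta$ with the real eigenvalue $-((i\Im\lambda)^2+\rho^2)=-(\rho^2-(\Im\lambda)^2)$, and that for $\lambda\in S_p$ one may write $|\Im\lambda|=\gamma_q\rho$ with $q\in[p,2]$. By the pointwise bound on $\phi$ it suffices to show $\int_S\wp_{i\Im\lambda}(x,n)\,w_N(x)\,dx<\infty$, and likewise on $X$. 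Passing to geodesic polar coordinates, $\int_{\partial B(\mathfrak s)}\wp_{i\Im\lambda}(r\omega,n)\,d\omega$ is the radialization $R(\wp_{i\Im\lambda}(\cdot,n))$ evaluated on the sphere of radius $r$; since $R$ commutes with $\Delta$, this radialization is a radial eigenfunction of $\Delta$ with eigenvalue $-((i\Im\lambda)^2+\rho^2)$, hence --- the space of radial eigenfunctions regular at $e$ with a given eigenvalue being one--dimensional (it is spanned by $\phi_{i\Im\lambda}=\mathcal P_{i\Im\lambda}1$) --- it equals $\wp_{i\Im\lambda}(e,n)\,\phi_{i\Im\lambda}$. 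Therefore $\int_S\wp_{i\Im\lambda}(x,n)\,w_N(x)\,dx=\wp_{i\Im\lambda}(e,n)\int_0^{\infty}\phi_{i\Im\lambda}(r)\,w_N(r)\,J(r)\,dr$, and by \eqref{exact-estimate-1} (or $\phi_0(r)\asymp(1+r)e^{-\rho r}$ when $q=2$) together with $J(r)\asymp e^{2\rho r}$ the integrand is $\asymp(1+r)^{c}e^{2\rho(1/q-1/p)r}$ (respectively $(1+r)^{c}e^{2\rho(1/2-1/p)r}$), which is integrable for $N$ large because $q\ge p$. The case $S=X$ is identical, with $R\bigl(e^{-(i(i\Im\lambda)+\rho)H(\cdot^{-1}k)}\bigr)=\phi_{i\Im\lambda}$ outright, since the $K$--radialization of a Poisson kernel $e^{-(i\mu+\rho)H(\cdot^{-1}k)}$ equals $\mathcal P_\mu 1=\phi_\mu$.

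\textbf{Main obstacle.} There is no single hard step; what requires most care is the identity in (b) that the angular mean of the positive eigenfunction $\wp_{i\Im\lambda}(\cdot,n)$ is a scalar multiple of $\phi_{i\Im\lambda}$ --- i.e. the one--dimensionality, forced by the indicial equation of the radial part of $\Delta$, of the space of radial eigenfunctions regular at $e$ --- together with tracking the logarithmic factor in $\phi_0$ in the borderline case $p=2$, $\lambda=0$ so as to fix $N$ large enough. Everything else is bookkeeping with \eqref{exact-estimate-1}, \eqref{DR-polar} and the Lorentz--H\"older inequality.
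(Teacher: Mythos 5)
Your proof is correct and follows essentially the same strategy as the paper: fix the radial weight $h(x)=w_N(x)\asymp\phi_0(x)^{2/p}(1+|x|)^{-N}$, show $h\in L^{p,1}(S)$, bound $|\phi|\le\gamma(\phi)\,h$, and then pair $|f|$ against $h$ using the $L^{p',\infty}$--$L^{p,1}$ duality (for (i) and (iii)) or a direct estimate (for (ii)). The paper's proof uses exactly this $h$ with a seminorm $\gamma(\phi)=\sup|\phi(x)|\phi_0(x)^{-2/p}(1+|x|)^{2L}$.

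Two places where your treatment diverges slightly, both legitimate. In (ii), you integrate by parts in $s$ (writing $w_N(|x|)=\int_{|x|}^\infty(-w_N')$ and applying Tonelli to reduce to $\int(-w_N'(s))\,|B(0,s)|^{1/p}\Phi(s)^{1/p'}\,ds$), while the paper decomposes $B(0,n_0)^c$ into unit shells and applies H\"older shell by shell, then treats $B(0,n_0)$ separately; the two computations are equivalent, and both yield a constant that depends on $f$ through the threshold $n_0$ after which $\Phi(R)\le 2R\,M_{p'}(f)^{p'}$ --- so you were right to flag that dependence, and the paper has the same feature. In (b), you derive the radialization identity $R\,\wp_{i\Im\lambda}(\cdot,n)=\wp_{i\Im\lambda}(e,n)\,\phi_{i\Im\lambda}$ from the one--dimensionality of the space of radial $\Delta$-eigenfunctions regular at $e$, whereas the paper simply cites \cite[p.~410]{ACB} for the identity $R\,\wp_\lambda(\cdot,n)=\wp_\lambda(e,n)\,\phi_\lambda$. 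Your version is actually slightly cleaner, since you first pass to $|\wp_\lambda|=\wp_{i\Im\lambda}$ before radializing; the paper's displayed chain writes $\big|\int_S h\,\wp_\lambda\big|$ where it should write $\int_S h\,|\wp_\lambda|=\int_S h\,\wp_{i\Im\lambda}$, though of course the estimate goes through either way because $\phi_\lambda$ and $\phi_{i\Im\lambda}$ have the same $L^{p',\infty}$ size for $\lambda\in S_p$. In short: no gaps, same approach, with a modest improvement in the bookkeeping of part (b).
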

\begin{proof}

(a) We fix $p\in (1, 2]$ and $L>\max\{\rho, 1+1/2p\}$. We define a seminorm $\gamma$ on $C^p(S)$  by  $\gamma(\phi)=\sup_{x\in S}|\phi(x)|\phi_0(x)^{-2/p}(1+|x|)^{2L}$, $\phi\in C^p(S)$.
We shall first show that the radial function $h(x)= \phi_0(x)^{2/p}(1+|x|)^{-2L}$ is in $L^{p,1}(S)$, which is equivalent to showing that
$h(r)=e^{-\frac{2\rho}{p}r}(1+r)^{-2L}$ is in $L^{p, 1}([0, \infty), J(r)dr)$ where $J(r)$ is the Jacobian of the Haar measure in
polar decomposition (see (\ref{DR-polar}), (\ref{polar})).
A lengthy but routine computation  shows that the decreasing rearrangement $h^\ast$ of $h$ satisfies
\begin{equation}
h^{\ast}(t)\asymp 1, \quad t\in (0,1] \ \text{ and } \
h^{\ast}(t)=h^{\ast}(e^{2\rho u}) \asymp
e^{-\frac{2\rho}{p}u}(1+u)^{-L/\rho}\quad u\geq
0.\label{rearrangement}
\end{equation}  Since $L>\rho$ from above we  obtain
\begin{eqnarray}
\int_0^{\infty}h^{\ast}(t)t^{-1/p'}dt\nonumber &\asymp &
\int_0^1t^{-1/p'}dt+\int_1^{\infty}h^{\ast}(t)t^{-1/p'}dt\nonumber\\
&=&\int_0^1t^{-1/p'}dt+2\rho\int_0^{\infty} h^{\ast}(e^{2\rho
y})e^{-\frac{2\rho
y}{p'}}e^{2\rho y}dy\nonumber\\
&=&\int_0^1t^{-1/p'}dt+2\rho\int_0^{\infty}(1+y)^{-L/\rho}dy<\infty.\nonumber
\end{eqnarray} Thus $h\in L^{p,1}(S)$. From this (i) and (iii) follows easily. Indeed,
$$\left|\int_Sf(x)\phi(x)(x)dx\right|\leq \int_S|\phi(x)|\phi_0(x)^{-2/p}(1+|x|)^{2L}|f(x)|
h(x)dx\le \gamma(\phi)\|f\|_{p', \infty}\|h\|_{p,1} \text{ and }
$$
\begin{eqnarray*}
 |\int_Sf(x)\phi(x) dx|&\le&\gamma(\phi)\int_S
|f(x)| h(x)dx\\
&=& \gamma(\phi)\int_0^\infty \int_{\partial B(\mathfrak s)}
|f(r\omega)|h(r)d\omega\, J(r)dr\\
&\le&\gamma(\phi)\int_0^\infty \mathcal A_q(f)(a_r)
h(r)J(r)dr\\
&\le&\gamma(\phi)\mathcal A_{p',q}(f)\|h\|_{p,1}.
\end{eqnarray*}
(ii)
By definition of $M_{p'}(f)$ there exists a natural number $n_0$ such that
$\int_{B(0,R)}|f(x)|^{p'} dx \leq 2RM_{p'}(f)^{p'}$,  for all  $R\geq n_0$.
We fix one such $n_0$.
Since $\phi_0(a_r)\asymp e^{-\rho |r|}(1+|r|)$,  we have for $L$ as above
\begin{eqnarray}
&& \int_{B(0,n_0)^c}|f(x)|\frac{\phi_0(x)^{2/p}}{(1+|x|)^{2L}} dx \label{ballcomplement}\\
&\asymp&\int_{n_0}^\infty\int_{\partial B(\mathfrak s)}|f(r\omega)|\frac{e^{\frac{-2\rho}{p}r}}{(1+r)^{2L-2/p}}  e^{2\rho r} d\omega dr \nonumber\\
&\leq&\int_{n_0}^\infty \left (\int_{\partial B(\mathfrak s)}|f(r\omega)|^{p'} d\omega\right)^{1/p'}\frac{e^{\frac{2\rho}{p'} r}}{(1+r)^{2L-2/p}}dr\nonumber\\
&=&\lim_{N\rightarrow\infty}\sum_{k=0}^{N-1-n_0}\int_{n_0+k}^{n_0+k+1}\left
(\int_{\partial B(\mathfrak s)}|f(r\omega)|^{p'}
d\omega\right)^{1/p'}\frac{e^{\frac{2\rho}
{p'}r}}{(1+r)^{2L-2/p}}dr
\nonumber\\
&\leq&\lim_{N\rightarrow\infty}\sum_{k=0}^{N-1-n_0}\frac{1}{(1+n_0+k)^{2L-2/p}}
\left(\int_{n_0+k}^{n_0+k+1}\int_{\partial B(\mathfrak s)}|f(r\omega)|^{p'}e^{2\rho r}d\omega dr\right)^{1/p'}\nonumber\\
&\leq&2M_{p'}(f)\lim_{N\rightarrow\infty}\sum_{k=0}^{N-1-n_0}\frac{1}{(1+n_0+k)^{2L-1-1/p}}\le CM_{p'}(f)\nonumber
\end{eqnarray}
as $2L>2+1/p$. On the other hand \begin{equation}
\left|\int_{B(0,n_0)}f(x)\frac{\phi_0(x)^{2/p}}{(1+|x|)^{2L}}dx\right|
\leq C\left(\int_{B(0,n_0)}|f(x)|^{p'}dx\right)^{1/p'}\leq
Cn_0^{1/p'}M_{p'}(f).\label{inball} \end{equation} Combining
(\ref{ballcomplement}) and (\ref{inball})
we have   for $\phi\in C^p(S)$,
 \begin{eqnarray}
\left|\int_Sf(x)\phi(x)dx\right|&=&\left|\int_Sf(x)\frac{\phi_0(x)^{2/p}}{(1+|x|)^{2L}}
(1+|x|)^{2L}\phi_0(x)^{-2/p}\phi(x)dx\right| \leq  C M_{p'}(f)\gamma(\phi).\nonumber\end{eqnarray}

(b) We recall that for  $\lambda\in S_p$, $\phi_\lambda\in L^{p', \infty}(S)$,  $[R \wp_\lambda(\cdot, n)](x)= \wp_\lambda(e, n) \phi_\lambda(x)$
(\cite[p. 410]{ACB}) and  the radial function $h(x)=\phi_0(x)^{2/p}(1+|x|)^{-2L}\in L^{p,1}(S)$ (see (a)).  Hence   for  $\phi\in C^p(S)$ and $\lambda\in S_p$,
$$|\int_S\phi(x) \wp_\lambda(x, n) dx|\le \gamma(\phi)|\int_S h(x)\wp_\lambda(x, n) dx|=
|\wp_\lambda(e, n)| \gamma(\phi) |\int_S h(x) \phi_\lambda(x)dx|<\infty.$$
Similarly using $\int_K e^{(i\lambda+\rho)H(x^{-1}k)} dk=\phi_\lambda(x)$ we get the other assertion.
\end{proof}

\subsection{Completion of proofs} Before we enter the proofs we need to explain the statements of Theorem A and B.
Indeed it is clear that   both $\Delta$ and $\Delta^{-1}$ act as  radial $L^p$-multipliers and hence a radial $L^{p'}$-multiplier for $1<p<2$. (See \cite[Theorem 1]{Ank2}, \cite[Corollary 4.18]{ADY}.) Hence by interpolation
(\cite[p. 197]{S-W}),  they act as  radial $L^{p, \infty}$-multiplier for $1<p<\infty$. We also note that  Lemma \ref{Mp-Apq-dist} and the hypotheses of Theorem A and B ensure that    the function $f$ is  a $L^p$-tempered distribution. It follows from the definition of $C^p(S)$ that for $\phi\in C^p(S)$, both $\Delta \phi$  and $\Delta^{-1}\phi$ are functions in $C^p(S)$, where the latter is interpreted as a radial multiplier on $C^p(S)$. Hence   $\Delta^k f, k\in \Z$ can also be  considered in the sense of $L^p$-tempered distributions. i.e. $\langle \Delta^k f, \phi\rangle=\langle f, \Delta^k \phi\rangle$. Lemma \ref{Mp-Apq-dist} shows that this distributional interpretation is more robust and is applicable to the various analogues of Theorem A and B, which  will be discussed in the next subsection.
\begin{proof}[Proof of Theorem B]
Let $T_k= (4\rho^2/pp')^{-k} \Delta^k f$ for all $k\in \Z^+$.
From the hypothesis we have $(\alpha^2+\rho^2)^{-k} \Delta^k f\in L^{p', \infty}(S)$. Therefore  by Lemma \ref{Mp-Apq-dist}~(a), $T_k$ is an $L^p$-tempered distribution and
$|\langle T_k, \psi\rangle|\le M \gamma(\psi)$ for all $\psi\in C^p(S)$.
We also note that $$\Delta T_k= \left(\frac{4\rho^2}{pp'}\right)^{-k} \Delta^{k+1} f=\left(\frac{4\rho^2}{pp'}\right)\left(\frac{4\rho^2}{pp'}\right)^{-(k+1)} \Delta^{k+1} f=\left(\frac{4\rho^2}{pp'}\right)T_{k+1}.$$
Thus the sequence $\{T_k\}$ satisfies the hypothesis of Theorem \ref{thm-Lp-temp} (a) and hence $\Delta T_0=-4\rho^2/pp' T_0$, i.e. $\Delta f=-4\rho^2/pp'f$.

If $S$ is a symmetric space then by Corollary \ref{lr}, we have $f=\mathcal P_{i\gamma_{p'}\rho}F$ for some $F\in L^{p'}(K/M)$.
\end{proof}

Similarly, Theorem A follows from  Lemma \ref{Mp-Apq-dist} (a), Theorem \ref{thm-L^2-temp}  and Theorem \ref{weakl2case}. 

\subsection{Other Analogs}
It is easy to observe the following:
\begin{enumerate}
\item[(i)] Applying Lemma \ref{Mp-Apq-dist} (a) and Theorem \ref{iobs}    in Theorem~\ref{thm-L^2-temp}, we obtain a version of Theorem~A substituting  $L^{2, \infty}$-norm by $M_2$ norm.

\item[(ii)] Applying Lemma \ref{Mp-Apq-dist} (a) and Theorem  \ref{samib}  in Theorem~\ref{thm-L^2-temp}, we obtain a version of Theorem~A for hyperbolic spaces  substituting  $L^{2, \infty}$-norm by $\mathcal A_{2,q}$-norm with $1<q<\infty$.

\item[(iii)] Applying  Lemma \ref{Mp-Apq-dist} (a) and Theorem ~\ref{Mpconjecture} in Theorem~\ref{thm-Lp-temp}, we get a version of Theorem~B, substituting $L^{p', \infty}$-norm  by $M_{p'}$-norm for $1<p<2$.

\item[(iv)]    Applying Lemma \ref{Mp-Apq-dist} (a) and Theorem \ref{lohoue-knapp-p} in Theorem~\ref{thm-Lp-temp}, a version of Theorem~B is obtained where  $L^{p', \infty}$-norm is substituted by $\mathcal A_{p',q}$-norm for $1<p<2$, $1<q<\infty$.
\end{enumerate}

  Theorem~\ref{thm-L^2-temp} and  Theorem~\ref{thm-Lp-temp} and the results of section 4 also yield the following  versions of Theorem A and Theorem B.  Notice  that these theorems resemble Theorem ~\ref{bob} as $L^\infty$-norm is in use.
\begin{theorem}\label{thm-A-new-1}
For a   measurable
functions $f$ on $S$ and   $\lambda>0$, if  $\|\phi_\lambda^{-1}\Delta^k f\|_\infty\le C_\lambda (\lambda^2+\rho^2)^k$ for all $k\in \Z$,
 then $\Delta f=-(\lambda^2+\rho^2) f$.
 When $S$ is a symmetric space then,
  for $\lambda\neq 0$,  $f=\mathcal P_\lambda F$ for some $F\in L^2(K/M)$ and for
   $\lambda=0$,  $f=\mathcal P_0 F$ for some $F\in L^\infty(K/M)$.
  \end{theorem}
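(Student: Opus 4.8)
The plan is to read the hypothesis as a uniform size estimate on the distributions $\Delta^k f$, feed the resulting sequence into Theorem~\ref{thm-L^2-temp}, and then invoke the eigenfunction characterizations of Section~4. Concretely, the condition $\|\phi_\lambda^{-1}\Delta^k f\|_\infty\le C_\lambda(\lambda^2+\rho^2)^k$ means that $|\Delta^k f(x)|\le C_\lambda(\lambda^2+\rho^2)^k\,|\phi_\lambda(x)|$ for a.e. $x\in S$. I would first check that, for real $\lambda$ (i.e. $\lambda\in S_2$), the radial function $\phi_\lambda$ pairs continuously with $C^2(S)$: when $\lambda\ne 0$ this follows from $\phi_\lambda\in L^{2,\infty}(S)$ (see \eqref{phi-delta-asym}) together with Lemma~\ref{Mp-Apq-dist}(a)(i), while when $\lambda=0$ one argues directly, since for $\psi\in C^2(S)$ one has $|\phi_0(x)\psi(x)|\le C\gamma(\psi)\,\phi_0(x)^2(1+|x|)^{-2L}$ with $\gamma$ the natural $C^2(S)$-seminorm and $L>3/2$, and $\phi_0(x)^2(1+|x|)^{-2L}\asymp(1+|x|)^{2-2L}e^{-2\rho|x|}$ is integrable against $J(r)\asymp e^{2\rho r}$. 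Setting $T_k=(\lambda^2+\rho^2)^{-k}\Delta^k f$ for $k\in\Z$ (the negative powers interpreted via the radial $L^{2,\infty}$-multiplier $\Delta^{-1}$, as in the discussion preceding the proof of Theorem~\ref{thm-X-p}), one then gets $|\langle T_k,\psi\rangle|\le M\gamma(\psi)$ for all $\psi\in C^2(S)$ with $M,\gamma$ independent of $k$, and $\Delta T_k=(\lambda^2+\rho^2)T_{k+1}$.

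Next I would apply Theorem~\ref{thm-L^2-temp}(a) with $z=\lambda^2+\rho^2$. Since $|z|=\lambda^2+\rho^2\ge\rho^2$, its conclusion is $\Delta T_0=-(\lambda^2+\rho^2)T_0$, that is, $\Delta f=-(\lambda^2+\rho^2)f$ in the distributional sense; this already proves the first assertion for an arbitrary Damek--Ricci space $S$. By analytic hypoellipticity of the elliptic operator $\Delta$ (whose coefficients are real analytic), $f$ coincides a.e. with a real-analytic eigenfunction, so when $S=X$ we have $f\in E_\lambda$.

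It then remains, for $S=X$, to realize $f$ as a Poisson transform. If $\lambda\ne0$, the bound $|f|\le C|\phi_\lambda|$ with $\phi_\lambda\in L^{2,\infty}(X)$ gives $f\in E_\lambda\cap L^{2,\infty}(X)$, and Theorem~\ref{weakl2case} yields $f=\mathcal P_\lambda F$ for some $F\in L^2(K/M)$. If $\lambda=0$, then $|f(x)|\le C\phi_0(x)\le C'$, so $\phi_0(a_t)^{-1}|f(ka_t)|$ is bounded on $X$; Sj\"ogren's Theorem~\ref{sj}, applied with $\lambda=0$, $p=\infty$ and $\beta=2\rho$ (so that $e^{-\beta t/p}\equiv1$ and $L^{\infty,\infty}(X,m_\beta)=L^\infty(X)$), then gives $f=\mathcal P_0 F$ for some $F\in L^\infty(K/M)$. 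The step I expect to need the most care is the uniform-in-$k$ tempered-distribution estimate of the first paragraph, particularly the borderline case $\lambda=0$: there $\phi_0\notin L^{2,\infty}(S)$, so Lemma~\ref{Mp-Apq-dist}(a) is not directly applicable and one must instead use that the exponential decay $e^{-2\rho|x|}$ of $\phi_0^2$ beats the exponential volume growth.
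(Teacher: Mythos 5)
Your proof is correct and follows essentially the same route as the paper's: convert the pointwise bound into a uniform $L^2$-tempered-distribution estimate, feed the sequence $T_k=(\lambda^2+\rho^2)^{-k}\Delta^k f$ into Theorem~\ref{thm-L^2-temp}(a), and then invoke the characterization results of Section~4. Two small divergences worth flagging. First, for the tempered-distribution bound you split into the cases $\lambda\neq 0$ (via $\phi_\lambda\in L^{2,\infty}$ and Lemma~\ref{Mp-Apq-dist}(a)(i)) and $\lambda=0$ (direct estimate); the paper instead uses the single observation $|\phi_\lambda|\le\phi_0$ to reduce at once to the inequality $|\langle\Delta^k f,\psi\rangle|\le\gamma(\psi)\|\phi_0^{-1}\Delta^k f\|_\infty\le\gamma(\psi)\|\phi_\lambda^{-1}\Delta^k f\|_\infty$, which is cleaner and makes the $\lambda=0$ case just a special instance. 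Second, for the identification $f=\mathcal P_0 F$ with $F\in L^\infty(K/M)$ in the $\lambda=0$ case you apply Sj\"ogren's Theorem~\ref{sj} with $p=\infty$, $\beta=2\rho$, whereas the paper cites Theorem~\ref{sos} of Ben Sa\"id--Oshima--Shimeno; these two characterizations cover the same ground here, so the choice is a matter of taste. You also make explicit the analytic hypoellipticity step (needed to pass from the distributional eigenequation to $f\in E_\lambda$), which the paper leaves implicit.
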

 \begin{proof}
It is easy to see that
for any $\phi\in C^2(S)$, $|\langle \Delta^k f, \phi\rangle|\le\gamma(\phi)\|\phi_0^{-1} \Delta^k f\|_\infty\le\gamma(\phi)\|\phi_\lambda^{-1} \Delta^k f\|_\infty$. Therefore if $T_k= (\lambda^2+\rho^2)^{-1}\Delta^k f$ then $T_k$ are $L^2$-tempered distributions and satisfies the hypothesis of  Theorem~\ref{thm-L^2-temp}. Therefore   we obtain,
$\Delta f=-(\lambda^2+\rho^2) f$.
If $\lambda\neq 0$, then $\phi_\lambda\in L^{2, \infty}(S)$ and hence  $f\in L^{2, \infty}(S)$.  Therefore for $S=X$ we apply Theorem \ref{weakl2case} to get $f=\mathcal P_\lambda F$ for some $F\in L^2(K/M)$.

If  $\lambda=0$, we apply Theorem~\ref{sos} to get $f=\mathcal P_0 F$ for some $F\in L^\infty(K/M)$.
\end{proof}
\begin{theorem}\label{thm-B-new}
For a   measurable
functions $f$ on $S$ if   $\|\phi_{i\gamma_p\rho}^{-1}\Delta^k f\|_\infty\le (4\rho^2/pp')^k$ for $k=0,1,2,\dots$,
then $\Delta f=-(4\rho^2/pp')f$.
If $S=X$ is a symmetric space, then
$f=\mathcal P_{-i\gamma_p\rho} F$ for some $F\in L^{p'}(K/M)$.
\end{theorem}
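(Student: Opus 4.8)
The plan is to run the proof of Theorem \ref{thm-A-new-1} essentially verbatim, feeding the one-sided sequence into Theorem \ref{thm-Lp-temp} in place of Theorem \ref{thm-L^2-temp} and, in the symmetric space case, invoking Corollary \ref{lr} in place of Theorem \ref{weakl2case}. Throughout $1<p<2$, so $\gamma_p>0$ and $4\rho^2/pp'=(-i\gamma_p\rho)^2+\rho^2$.

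First I would check that the hypothesis makes every $\Delta^k f$ an $L^p$-tempered distribution with a bound uniform in $k$. Recall from the proof of Lemma \ref{Mp-Apq-dist} (a) that the radial function $h(x)=\phi_0(x)^{2/p}(1+|x|)^{-2L}$ lies in $L^{p,1}(S)$ once $L>\rho$, and that, directly from the definition of $C^p(S)$ (take $D$ equal to the identity), one has $|\psi(x)|\le\gamma(\psi)\,h(x)$ for every $\psi\in C^p(S)$, where $\gamma(\psi)=\sup_{x\in S}|\psi(x)|\phi_0(x)^{-2/p}(1+|x|)^{2L}$ is a continuous seminorm on $C^p(S)$. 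Since $\lambda=i\gamma_p\rho$ sits exactly on the boundary $|\Im\lambda|=\gamma_p\rho$, one also has $\phi_{i\gamma_p\rho}\in L^{p',\infty}(S)$ (section 2). Hence for any measurable $g$ with $\|\phi_{i\gamma_p\rho}^{-1}g\|_\infty<\infty$ and any $\psi\in C^p(S)$, writing $g\psi=(\phi_{i\gamma_p\rho}^{-1}g)(\phi_{i\gamma_p\rho}\psi)$ and applying H\"older's inequality for Lorentz spaces (property (i) of section 2) gives
\[
|\langle g,\psi\rangle|\le\|\phi_{i\gamma_p\rho}^{-1}g\|_\infty\,\bigl\|\phi_{i\gamma_p\rho}\,h\bigr\|_1\,\gamma(\psi)\le\|\phi_{i\gamma_p\rho}^{-1}g\|_\infty\,\|\phi_{i\gamma_p\rho}\|_{p',\infty}\,\|h\|_{p,1}\,\gamma(\psi).
\]
It is worth noting that, unlike in Theorem \ref{thm-A-new-1}, one cannot first reduce to $\|\phi_0^{-2/p}g\|_\infty$: the boundary weight $\phi_{i\gamma_p\rho}$ decays strictly more slowly than $\phi_0^{2/p}$, and what saves the estimate is instead that $\phi_{i\gamma_p\rho}\,h$ is still integrable.

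Now set $T_k=(4\rho^2/pp')^{-k}\Delta^k f$ for $k\in\Z^+$. By hypothesis $\|\phi_{i\gamma_p\rho}^{-1}T_k\|_\infty\le 1$ for every $k$, so by the previous paragraph each $T_k$ is an $L^p$-tempered distribution with $|\langle T_k,\psi\rangle|\le C\,\gamma(\psi)$, the constant $C$ being independent of $k$; and clearly $\Delta T_k=(4\rho^2/pp')\,T_{k+1}$. Applying Theorem \ref{thm-Lp-temp} (a) with $z=4\rho^2/pp'$ (so $|z|=4\rho^2/pp'$) we conclude $\Delta T_0=-(4\rho^2/pp')\,T_0$, i.e. $\Delta f=-(4\rho^2/pp')f$.

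Finally, suppose $S=X$. Since $-(4\rho^2/pp')=-\bigl((-i\gamma_p\rho)^2+\rho^2\bigr)$, the function $f$ belongs to $E_{-i\gamma_p\rho}$; and the case $k=0$ of the hypothesis yields $|f(x)|\le\phi_{i\gamma_p\rho}(x)$ pointwise, so $f\in L^{p',\infty}(X)$ because $\phi_{i\gamma_p\rho}\in L^{p',\infty}(X)$ and weak $L^{p'}$ is solid under pointwise domination. Corollary \ref{lr}, with $q=p$, $\alpha=0$ and $\lambda=-i\gamma_p\rho$, then provides $F\in L^{p'}(K/M)$ with $f=\mathcal{P}_{-i\gamma_p\rho}F$, as claimed. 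Everything here is routine given the earlier results; the only step demanding attention is the integrability estimate of the second paragraph, namely that the slowly decaying weight $\phi_{i\gamma_p\rho}$ remains admissible against the $L^{p,1}$ majorant of elements of $C^p(S)$.
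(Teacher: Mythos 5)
Your proof is correct and follows the paper's approach essentially verbatim: convert the hypothesis into a uniform bound on the $L^p$-tempered distributions $T_k=(4\rho^2/pp')^{-k}\Delta^k f$, apply Theorem \ref{thm-Lp-temp}(a), and invoke Corollary \ref{lr} for the Poisson-transform conclusion on $X$. The extra care you take in the second paragraph is well placed and worth keeping, since -- unlike in Theorem \ref{thm-A-new-1} -- the hypothesis does not control $\|\phi_0^{-2/p}\Delta^k f\|_\infty$ ($\phi_{i\gamma_p\rho}$ decays more slowly than $\phi_0^{2/p}$), so the estimate must instead rest on the integrability of $\phi_{i\gamma_p\rho}\,h$, which your Lorentz--H\"older bound $\|\phi_{i\gamma_p\rho}h\|_1\le\|\phi_{i\gamma_p\rho}\|_{p',\infty}\|h\|_{p,1}$ delivers.
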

\begin{proof} As in the previous theorem, one verifies that for any $\phi\in C^p(S)$, $|\langle f, \phi\rangle|\le\gamma(\phi)\|\phi_{i\gamma_p\rho}^{-1} f(x)\|_\infty$.
Therefore   Theorem~\ref{thm-Lp-temp} can be used and we have, $\Delta f=-4\rho^2/pp' f$.  Corollary \ref{lr} now shows for $S=X$ that $f=\mathcal P_{-i\gamma_p\rho} F$
for some $F\in L^{p'}(K/M)$.
\end{proof}

\subsection{Concluding Remarks}
Let us restrict our attention to the symmetric spaces where the characterization of the Poisson  transforms is achieved.

\noindent(i)
  We fix $p\in (1,2)$. One notices that in  Theorem B and its various analogues we conclude $f=\mathcal P_{-i\gamma_p\rho}F$ for some $F\in L^{p'}(K/M)$,  while $\mathcal P_{i\gamma_p\rho}F$ for $F\in L^p(K/M)$ also satisfies the hypothesis (see (\ref{poisson-estimate-X}) and thus is a candidate to be characterized.
Indeed using the intertwining operator $I_p:L^p(K/M)\rightarrow L^{p'}(K/M)$ of Knapp and Stein (\cite{KS}), it was shown by Cowling Meda and Setti in \cite{Cow-Herz} that for $F_1\in L^p(K/M)$,
\begin{equation}
C_p \mathcal P_{i\gamma_p\rho}F_1=\mathcal P_{-i\gamma_{p}\rho} I_p(F_1),
\end{equation}
where $F=I_p(F_1)\in L^{p'}(K/M)$.  Thus $\{\mathcal P_{-i\gamma_p\rho}F\mid F\in L^{p'}(K/M)\}$ is a bigger class which includes the Poisson transforms $\mathcal P_{i\gamma_p\rho}F_1$ for $F_1\in L^p(K/M)$.

\noindent(ii) In the beginning of section 5 we have mentioned that the hypothesis of Theorem A an B (as well as their analogues) exclude the complex powers of the Poisson kernel. We have  shown that the theorems of section 5 saves the situation. For the symmetric spaces we can also weaken the hypothesis in a different way to include those complex powers of Poisson kernel. This we shall describe below. We shall  consider only the $L^2$-case, the $L^p$-case being similar. Let $\what{K}_M$ be the set of irreducible unitary representations of $K$ which contains an $M$-fixed vector. For $\delta\in \what{K}_M$, let $f_\delta$ be the $\delta$-isotypic component  of a suitable function $f$ on $X$. Precisely, $f_\delta(x)= d_\delta\chi_\delta\ast f(x)$  where  $\chi_\delta, d_\delta$  denote  respectively the dimension and trace of $\delta$. We note that if $f(x)=e^{-(i\lambda+\rho)H(x^{-1}k)}$ for some $\lambda\in \R^\times$ then  $f_\delta$  decays like  $\phi_\lambda$ (see \cite[3.11]{Ion-Pois-1}) and hence is in $L^{2, \infty}(X)$ (see section 2). In view of this we formulate the following:

\begin{theorem}
Let $f$ be a measurable function on  $X$ satisfying
for some $\alpha>0$,
 $\|\Delta^kf_{\delta}\|_{2, \infty}\le C_\delta (\alpha^2+\rho^2)^k$ for all $k\in \Z$ and $\delta\in \what{K}_M$.
Then $\Delta f=-(\alpha^2+\rho^2)f$.
\end{theorem}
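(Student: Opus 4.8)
The plan is to run two reductions in tandem: decompose $f$ into $K$-isotypic components $f_\delta=d_\delta\,\chi_\delta\ast f$ (with respect to the left action of $K$ on $X=G/K$), apply Theorem~\ref{thm-L^2-temp} to each component separately, and then reassemble. Two standard facts about $X$ make this legitimate. First, $\Delta$ is $G$-invariant, hence commutes with left $K$-translations and therefore with each projection $u\mapsto u_\delta$; the same is true of the inverse radial multiplier $\Delta^{-1}$ acting on $L^2$-tempered distributions (as used in the proof of Theorem~\ref{thm-X-2}), so $(\Delta^k f)_\delta=\Delta^k(f_\delta)$ for all $k\in\Z$ and $\delta\in\what{K}_M$. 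Second, since $X=G/K$, only $M$-spherical $K$-types occur, and every $L^2$-tempered distribution $u$ on $X$ is recovered from its components: $u=\sum_{\delta\in\what{K}_M}u_\delta$, the sum converging in the dual of $C^2(X)$ because $\psi=\sum_\delta\psi_\delta$ converges in $C^2(X)$ for each $\psi\in C^2(X)$ (rapid decay of $\delta\mapsto\chi_\delta\ast\psi$, $\psi$ being smooth). This is exactly the mechanism alluded to in Remark~\ref{alternative-for-X}.

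Fix $\delta\in\what{K}_M$ and set $T^{(\delta)}_k=(\alpha^2+\rho^2)^{-k}\Delta^k f_\delta$ for $k\in\Z$. By hypothesis $T^{(\delta)}_k$ is (a.e.\ equal to) a measurable function with $\|T^{(\delta)}_k\|_{2,\infty}\le C_\delta$, so Lemma~\ref{Mp-Apq-dist}(a)(i) shows that each $T^{(\delta)}_k$ is an $L^2$-tempered distribution and that there is a seminorm $\gamma$ of $C^2(X)$, independent of $k$ and $\delta$, with
\[
|\langle T^{(\delta)}_k,\psi\rangle|\le C\,\|T^{(\delta)}_k\|_{2,\infty}\,\gamma(\psi)\le (C\,C_\delta)\,\gamma(\psi),\qquad \psi\in C^2(X).
\]
Moreover $\Delta T^{(\delta)}_k=(\alpha^2+\rho^2)^{-k}\Delta^{k+1}f_\delta=(\alpha^2+\rho^2)\,T^{(\delta)}_{k+1}$. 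Thus $\{T^{(\delta)}_k\}_{k\in\Z}$ satisfies the hypotheses of Theorem~\ref{thm-L^2-temp} with $z=\alpha^2+\rho^2$ and $M=C\,C_\delta$; since $|z|=\alpha^2+\rho^2\ge\rho^2$, part~(a) of that theorem gives $\Delta T^{(\delta)}_0=-(\alpha^2+\rho^2)\,T^{(\delta)}_0$, that is,
\[
\Delta f_\delta=-(\alpha^2+\rho^2)\,f_\delta\qquad\text{for every }\delta\in\what{K}_M.
\]

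Now let $g=\Delta f+(\alpha^2+\rho^2)f$, an $L^2$-tempered distribution on $X$. Using that $\Delta$ commutes with the projections, $g_\delta=\Delta f_\delta+(\alpha^2+\rho^2)f_\delta=0$ for every $\delta\in\what{K}_M$. Writing $P_\delta$ for the isotypic projection and using $\langle g,P_\delta\psi\rangle=\langle g_{\bar\delta},\psi\rangle$ (the set $\what{K}_M$ being closed under passage to the contragredient), we get for each $\psi\in C^2(X)$
\[
\langle g,\psi\rangle=\Big\langle g,\sum_{\delta}P_\delta\psi\Big\rangle=\sum_{\delta}\langle g,P_\delta\psi\rangle=\sum_{\delta}\langle g_{\bar\delta},\psi\rangle=0,
\]
hence $g=0$, i.e.\ $\Delta f=-(\alpha^2+\rho^2)f$; by elliptic regularity $f$ then agrees a.e.\ with a real-analytic eigenfunction of $\Delta$.

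The main point requiring care is precisely that one cannot combine the per-$\delta$ conclusions at the level of norms, since $\sum_\delta C_\delta$ need not be finite — this is why the result does not follow formally from Theorem~\ref{thm-X-2}. Accordingly, the work lies in (i) verifying that each $f_\delta$ \emph{together with all of its $\Delta$-powers, positive and negative}, is a bona fide $L^2$-tempered distribution with seminorm bounds growing exactly like $(\alpha^2+\rho^2)^k$, so that Theorem~\ref{thm-L^2-temp} applies with the correct $z$; and (ii) justifying that knowing the eigenequation on each $\what{K}_M$-component is enough to recover it for $f$, which rests on the fact that functions on $X=G/K$ carry only $M$-spherical $K$-types and on the convergence of the Peter--Weyl decomposition in $C^2(X)$ and hence in its dual. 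Both are routine once the distributional interpretation of $\Delta^{\pm k}$ from Section~6 is in force, but they are the crux of the argument.
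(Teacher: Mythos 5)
Your proof is correct and fills in exactly the argument the paper points to but does not spell out: the paper states this result without proof in the concluding remarks, relying on Remark~\ref{alternative-for-X} (decompose into $K$-types and invoke the $L^2$-tempered-distribution machinery componentwise). You carry this out properly by passing from the $\delta$-wise weak-$L^2$ bound to the seminorm bound $|\langle T^{(\delta)}_k,\psi\rangle|\le CC_\delta\,\gamma(\psi)$ via Lemma~\ref{Mp-Apq-dist}(a), applying Theorem~\ref{thm-L^2-temp}(a) with $z=\alpha^2+\rho^2\ge\rho^2$ to each component, and reassembling via the convergence of the Peter--Weyl decomposition in $C^2(X)$, correctly noting that the $\delta$-dependence of the constants is precisely why this does not reduce formally to Theorem~\ref{thm-X-2}.
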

Notice that the decomposition of functions and distributions in $K$-types  suggests an easier way to prove the
main results for the  symmetric spaces. We have noted this in Remark \ref{alternative-for-X}.

\noindent(iii) It should be possible to generalize some of the results considered in this paper to higher rank Riemannian symmetric spaces. We shall take it up in future.


\begin{thebibliography}{999}
\bibitem{AN}
Agranovsky,~ M.~ L.; Narayanan,~ E.~ K. {\em $L^p$-integrability,
supports of Fourier transforms and uniqueness for convolution
equations}. J. Fourier Anal. Appl. 10 (2004), no. 3, 315-–324.
\bibitem{Ank2} Anker, ~J-P. {\em $L^p$ Fourier multipliers on Riemannian symmetric spaces of the noncompact type}.
Ann. of Math. (2) 132 (1990), no. 3, 597–-628.
\bibitem{ADY} Anker,
J-P.; Damek, E.; Yacoub, C. {\em Spherical analysis on harmonic $AN$
groups}. Ann. Scuola Norm. Sup. Pisa Cl. Sci. (4)  23 (1996),
no. 4, 643--679 (1997).
\bibitem{ACB} Astengo, F.; Camporesi, R.; Di Blasio, B. {\em The Helgason Fourier transform
on a class of nonsymmetric harmonic spaces}. Bull. Austral. Math.
Soc. {\bf 55} (1997), no. 3, 405--424.
\bibitem{BOS}
 Ben Sa\"{i}d, ~S; Oshima, ~T.; Shimeno, ~N.
 {\em Fatou's theorems and Hardy-type spaces for eigenfunctions of the invariant differential operators on symmetric spaces}. Int. Math. Res. Not. 2003, no. 16, 915--931.
\bibitem{Bou-Sami}
 Boussejra, ~A.; Sami, ~H. {\em Characterization of the $L^p$-range of the Poisson transform in hyperbolic spaces $B({\mathbb F}^n)$}. J. Lie Theory 12 (2002), no. 1, 1--14.
\bibitem{Cow-Herz}
 Cowling, ~M.  {\em Herz's ``principe de majoration'' and the Kunze-Stein phenomenon}. Harmonic analysis and number theory, 73--88, CMS Conf. Proc., 21, Amer. Math. Soc., Providence, RI, 1997.
\bibitem{Dam1}
Damek, E. {\em Harmonic functions on semidirect extensions of type H nilpotent groups}. Trans. Amer. Math. Soc. 290 (1985), no. 1,
375–-384.
\bibitem{Damek-Ricci}
Damek, ~E.; Ricci, ~F. {\em A class of nonsymmetric harmonic Riemannian spaces}.
Bull. Amer. Math. Soc.  27 (1992), no. 1, 139–-142.
\bibitem{DR}  Damek, ~E.; Ricci, ~F. {\em Harmonic analysis on solvable extensions of H-type groups}.
J. Geom. Anal. 2 (1992), no. 3, 213–-248.
\bibitem{di-B} Di Blasio, ~B. {\em  Paley-Wiener type theorems on harmonic extensions of H-type groups}.
Monatsh. Math. 123 (1997), no. 1, 21–-42.
\bibitem{F} Furstenberg, ~H. {\em A Poisson formula for semi-simple groups}.
Annals of Mathematics 77 (1963), 335--386.
\bibitem{Graf} Grafakos, ~L. {\em Classical and Modern Fourier Analysis}. Pearson Education, Inc. New Jersy 2004.
\bibitem{Gre}
Grellier, ~S.; Otal, ~J-P.
{\em Bounded eigenfunctions in the real hyperbolic space}.
Int. Math. Res. Not. 2005, no. 62, 3867–-3897.
\bibitem{H1}  Helgason, ~S. {\em A duality for symmetric spaces with applications to group
representations}.  Adv. Math. 5, 1–-154 (1970)
\bibitem{H2}  Helgason, ~S. {\em Eigenspaces of the Laplacian, integral representations and irreducibility}.
 J. Funct. Anal. 17, 328–-353 (1974)
\bibitem{Helga-2}
Helgason, ~S. {\em Groups and geometric analysis. Integral geometry,
invariant differential operators, and spherical functions}.  Academic Press, Inc., Orlando, FL, 1984
\bibitem{Helga-3}
Helgason, ~S. {\em Geometric analysis on symmetric spaces}.
Mathematical Surveys and Monographs, 39. Amer. Math. Soc.,
Providence, RI, 1994.
\bibitem{Howd-1}
Howard, ~R. {\em A note on Roe's characterization of the sine
function}. Proc. Amer. Math. Soc. 105 (1989), no. 3, 658--663.
\bibitem{Howd-Reese}
Howard, ~R.; Reese, ~M. {\em Characterization of eigenfunctions by
boundedness conditions}. Canad. Math. Bull. 35 (1992), no. 2,
204--213.
\bibitem{Hu2}
Hulanicki, A. {\em On $L_{p}$-spectra of the laplacian on a Lie group
with polynomial growth}. Proc. Amer. Math. Soc. 44 (1974), 482--484.
\bibitem{Hu3}
 Hulanicki, A. {\em Subalgebra of $L_{1}(G)$
associated with Laplacian on a Lie group}. Colloq. Math. 31 (1974),
259-–287.
\bibitem{Ion-Pois-1}
 Ionescu, ~A. ~D. {\em On the Poisson transform on symmetric spaces of real rank one}.
 J. Funct. Anal. 174 (2000), no. 2, 513--523.
\bibitem{J-S}
Jana, ~J.; Sarkar, ~R. ~P. {\em On the Schwartz space isomorphism theorem for rank one symmetric space}.
Proc. Indian Acad. Sci. Math. Sci. 117 (2007), no. 3, 333--348.
\bibitem{KKM}  Kashiwara, ~M.; Kowata, ~A.; Minemura, ~K.;  Okamoto, ~K.;  Oshima, ~T.;  Tanaka, ~M. {\em Eigenfunctions of
invariant differential operators on a symmetric space}.  Ann. Math.
107, 1–-39 (1978)
\bibitem{Kim-Chung}
Kim,~J.; Chung,~Y; {\em Generalization of characterizations of the
trigonometric functions}. Math. Proc. Cambridge Philos. Soc. 141
(2006), no. 3, 509-–519.
\bibitem{KS}
Knapp, A. W.; Stein, E. M. {\em  Intertwining operators for semisimple groups}.
Ann. of Math. (2) 93 (1971), 489–-578.
\bibitem{KW} Knapp, A. W.;  Williamson, R. E. {\em Poisson integrals and semisimple groups}. J. Analyse Math. 24 (1971), 53–-76.
\bibitem{KoNar}  Kotake, ~T.; Narasimhan, ~M. S. {\em Regularity theorems for fractional powers of a linear elliptic operator}. Bull. Soc. Math. France 90 (1962) 449-–471.
\bibitem{KRS}
 Kumar, ~P.; Ray, ~S. ~K.; Sarkar, ~R. ~P. {\em The role of restriction theorems in harmonic analysis on harmonic $NA$ groups}. J. Funct. Anal. 258 (2010), no. 7, 2453--2482.
\bibitem{L}  Lewis, J. B. {\em Eigenfunctions on symmetric spaces with
distribution-valued boundary forms}.  J. Funct. Anal.  29  (1978),
no. 3, 287--307.
\bibitem{Lo-Ry-2}  Lohou\'{e}, ~N; Rychener, ~T. {\em Die Resolvente von $\Delta$ auf symmetrischen R\"{a}umen vom nichtkompakten Typ}.
Comment. Math. Helv. 57 (1982), no. 3, 445–-468.
\bibitem{Lo-Ry}  Lohou\'{e}, ~N.; Rychener, ~T. {\em Some function spaces on symmetric spaces related to convolution operators}.
J. Funct. Anal. {\bf 55} (1984), no. 2, 200--219.
\bibitem{M} Michelson, ~H. ~L. {\em Fatou theorems for eigenfunctions of the invariant differential operators on
symmetric spaces}. Trans. Amer. Math. Soc.  177  (1973), 257–-274.
\bibitem{Ray-Sarkar}
 Ray, ~S. ~K.; Sarkar, ~R. ~P. {\em Fourier and Radon transform on harmonic $NA$ groups}. Trans. Amer. Math. Soc. 361 (2009), no. 8, 4269--4297.
\bibitem{Roe} Roe, ~J.
{\em A characterization of the sine function}. Math. Proc. Cambridge
Philos. Soc. 87 (1980), no. 1, 69--73.
\bibitem{Sj} Sj\"{o}gren, ~P. {\em Characterizations of Poisson integrals on symmetric spaces}.
 Math. Scand. 49 (1981), 229--249.
\bibitem{S-W}
Stein, ~E. ~M.; Weiss, ~G. {\em Introduction to Fourier analysis on
Euclidean spaces}.  Princeton
Univ. Press,  1971. 
\bibitem{Stoll} Stoll, M. {\em Hardy-type spaces of harmonic functions on symmetric spaces
of noncompact type}.  J. Reine Angew. Math. 271 (1974), 63–-76.
\bibitem{St1}  Strichartz, ~R. ~S. {\em Harmonic analysis as spectral theory of Laplacians}. J. Funct. Anal. 87 (1989), 51–-148
\bibitem{Str}
 Strichartz, ~R. ~S. {\em Characterization of eigenfunctions of the Laplacian by boundedness conditions}.
 Trans. Amer. Math. Soc. 338 (1993), no. 2, 971--979.
\bibitem{Tay}  Taylor, M. E. {\em $L^p$-estimates on functions of the Laplace operator}. Duke Math. J. 58 (1989), no. 3, 773–-793.
\end{thebibliography}
\end{document}